\theoremstyle{plain}
\newtheorem{theorem}{Theorem}[section]
\newtheorem{lemma}[theorem]{Lemma}
\newtheorem{proposition}[theorem]{Proposition}
\newtheorem{fact}[theorem]{Fact}
\theoremstyle{definition} %actually mainly do not "mute" (in hebrew, in other words, be strait)
\newtheorem{definition}[theorem]{Definition}
\newtheorem{example}[theorem]{Example}
\newtheorem{remark}[theorem]{Remark}
\newtheorem{hypothesis}[theorem]{Hypothesis}
\theoremstyle{remark}
\renewcommand{\phi}{\varphi}
\newcommand{\initial}\lessdot
\newcommand{\id}{\operatorname{id}}
\def\?{?\vadjust

{\vbox to 0pt{\vskip-7pt\hbox to 1.1\hsize{\hfill\huge ?!}}}}
 \def\nfork{\setbox0\hbox{$\bigcup$}%
 \setbox1=\hbox to \wd0{\hfil\vrule width 0.7pt depth 2pt height 7.5pt\hfil}%
 \wd1=0cm\relax\box1\box0}
 \def\dnf{\mathop{\nfork}\limits}
\newcommand{\be}{\begin{enumerate}}
\newcommand{\ee}{\end{enumerate}}
\newcommand{\case}{\emph}
\newcommand{\discussion}{\subsubsection*}
\renewcommand{\epsilon}{\varepsilon}
\newcommand{\al}{\allowbreak}
\begin{document}
%%%%%%%%%%%%%%%%%%%%%%%%%%%%%%%%
%%%%%%%%%%%%%%%%%%%%%%%%%%%%%%%%
\title{Independence of Sets Without Stability}

\thanks{This paper is a part of the PHD thesis of the first author
with the supervisor of Boaz Tsaban}

\author{Adi Jarden}
\email[Adi Jarden]{jardenadi@gmail.com}
\address{Department of Mathematics and computer science.\\ Bar-Ilan University \\ Ramatgan 52900, Israel}

\author{Alon Sitton}
\email[Alon Sitton]{alonsitton@gmail.com}
\address{Institute of Mathematics\\ Hebrew University\\ Jerusalem, Israel}
%\date{October 12, 2008}

\maketitle

\begin{abstract}
We presents an independence relation on sets, one can define
dimension by it, assuming that we have an abstract elementary
class with a forking notion that satisfies the axioms of a good
frame minus stability.
\begin{comment}
%This independence relation
%appears in section 5 of \cite{sh705}, but there, Shelah assumes
%the stability assumption, the successfulness and the goodness$^+$
%assumptions too.
\end{comment}
\end{abstract}

\tableofcontents

\discussion{Introduction} We would like to find an ``independence
relation'' such that if $K$ is a class of models, $M_0,M_1 \in K$
and $J \subseteq M_1-M_0$ then we can say if $J$ is independent in
$M_1$ over $M_0$ or not.
%More precisely we look for
%a relation on triples in $\{(M_0,M_1,J):M_0 \in K,M_1 \in K,\
%\subseteq J \subseteq M_1-M_0\}$.
The independence relation should satisfy the following things:
\begin{enumerate}
\item If $K$ is the class of fields with character
%(check the English. complete...)
0, then independence is linear independence.
\item If $J^* \subseteq J \subseteq M_1-M_0$ and $J$ is
independent in $M_1$ over $M_0$ then $J^*$ is independent in $M_1$
over $M_0$. \item If $J_1,J_2$ are maximal independent subsets of
$M_1$ over $M_0$ then $|J_1|=|J_2|$ or they both finite.
\end{enumerate}
It is hard to find such a relation, so we decrease our ambitions
in two aspects:
\begin{enumerate}
\item $K$ should be a ``good class'' of models. \item $M_0$ should
be a ``good submodel'' of $M_1$.
\end{enumerate}
The restriction of the context is made in two steps: In section 1,
we define a list of axioms, a pair $(K,\preceq_\frak{k})$ that
satisfy those axioms is called an ``abstract elementary class''
(in short a.e.c.). In section 2 we define a list of axioms of a
non-forking relation and restrict the study to those a.e.c.'s, one
can find a non-forking relation related to them. On such a.e.c.'s
we exhibit in section 3 an independence relation and prove that
one can define dimension by it. Sections 1,2,3 are self contained.
In section 4 assuming existence of uniqueness triples (so
familiarity with \cite{jrsh875}, \cite{sh600} or \cite{sh705} is
assumed), We prove that the relations independence and finitely
independence are the same and more properties.

What are the connections between the present paper and other
papers? In \cite{jrsh875} we study stability theory without
assuming stability, but weak stability. The main purpose is to
study abstract elementary classes (shortly a.e.c.'s) which are
$PC_{\aleph_0}$. The theorems we prove here, may be useful in the
study of such classes too.
%But they might be applied in other contexts too.

%Therefore by \pageref{concatenation} pages are enough for giving
%the full background and proving the existence of an independence
%relation in a vary wide context.

%More advantage is the decreasing in the
%background the reader needs to know.

The frame we define (``good frame minus stability'') is similar to
the weak forking notion which is defined in \cite{grko}, which is
parallel to simple first order theories, see Remark 1.5 on page 8
of \cite{grko} (but there is a significant difference: here we
work in one cardinality only).

We define independence as in \cite{sh705}. In what aspects do we
improve here the results in section 5 of \cite{sh705}?
\begin{enumerate}
\item We do not assume
%any trace of
stability. \item We do not
assume successfulness. \item We do not assume $goodness^+$. \item
We prove several important propositions without assuming that
$K^{3,uq}$ has existence.
\end{enumerate}

\section{Abstract Elementary Classes}
\begin{definition}[Abstract Elementary Classes]\label{1.1}
\mbox{}
\begin{enumerate}
\item  Let $K$ be a class of models for a fixed vocabulary and let
$\preceq=\preceq_\frak{k}$ be a 2-place relation on $K$. The pair
$\frak{k}=(K,\preceq_\frak{k})$ is an \emph{a.e.c.} if the
following axioms are satisfied:
\begin{enumerate}
%(1)
\item  $K,\preceq$ are closed under isomorphisms. In other words,
if $M_1 \in K$, $M_0 \preceq_{\frak{k}}M_1$ and $f:M_1 \to N_1$ is
an isomorphism then $N_1 \in K$ and $f[M_0]\preceq_\frak{k}N_1$.
\item $\preceq$ is a partial order and it is included in the
inclusion relation. \item If $\langle
M_\alpha:\alpha<\delta\rangle$ is a continuous
$\preceq_\frak{k}$-increasing sequence, then $$M_0 \preceq \bigcup
\{M_\alpha:\alpha<\delta\} \in K.$$ \item Smoothness: If $\langle
M_\alpha:\alpha<\delta \rangle$ is a continuous
$\preceq_\frak{k}$-increasing sequence, and for every
$\alpha<\delta,\ M_\alpha\preceq N$, then
$$\bigcup \{M_\alpha:\alpha<\delta\} \preceq N.$$
\item If $M_0 \subseteq M_1 \subseteq M_2$ and $M_0 \preceq M_2
\wedge M_1 \preceq M_2$, then $M_0 \preceq M_1$. \item  There is a
Lowenheim Skolem Tarski number, $LST(\frak{k}$), which is the
minimal cardinal $\lambda$, such that for every model $N \in K$
and a subset $A$ of it, there is a model $M \in K$ such that $A
\subseteq M \preceq N$ and the cardinality of $M$ is $\leq
\lambda+|A|$.
\end{enumerate}
%(2)
\item  $\frak{k}=(K,\preceq)$ is an \emph{a.e.c. in $\lambda$} if:
The cardinality of every model in $K$ is $\lambda$, and it
satisfies axioms a,b,d,e of a.e.c., and for sequences $\langle
M_\alpha:\alpha<\delta \rangle$ with $\delta<\lambda^+$ it
satisfies axiom c too.
\end{enumerate}
\end{definition}

In \cite{gr21} there are examples of a.e.cs. The following are
examples of naturals classes which are not a.e.cs.

\begin{example}
The class of \emph{sets} (i.e. models without relations or
functions) with cardinality less than $\kappa$, where $\aleph_0
\leq \kappa$ and the relation is $\subseteq$, is \emph{not} an
a.e.c., as it does not satisfy axiom c.

The class of sets with the relation $\preceq=\{(M,N):M \subseteq
N$ and $||N-M||>\kappa\}$ where $\aleph_0 \leq \kappa$, is not an
a.e.c., as it does \emph{not} satisfy smoothness (axiom d).
\end{example}

\begin{definition}\label {1.2}
\mbox{} %i.e. new line before starting the definition
\begin{enumerate}
\item For a model $M \in K$ we denote its universe by $|M|$, and
its cardinality by $||M||$. \item $K_\lambda=:\{M \in
K:||M||=\lambda\}$.
\end{enumerate}
\end{definition}

\begin{definition}
\label{1.2b} \mbox{}
\begin{enumerate}
\item Let $M,N$ be models in $K$ and let $f$ be an injection of
$M$ to $N$. We say that $f$ is a
$\preceq_\frak{k}$-\emph{embedding}, or $f$ is an embedding (if
$\preceq_\frak{k}$ is clear from the context), when $f$ is an
injection with domain $M$ and $Im(f)\preceq_\frak{k}N$. \item A
function $f:B \to C$ is \emph{over} $A$, if $A \subseteq B \bigcap
C$ and $x \in A \Rightarrow f(x)=x$.
\end{enumerate}
\end{definition}

\begin{definition}
\label{1.3} \mbox{}
\begin{enumerate}
\item $K^3=:\{(M,N,a):M \in K,\ N \in K,\ M \preceq N,\ a \in
N\}$. \item $K^3_\lambda=:\{(M,N,a):M \in K_\lambda,\ N \in
K_\lambda,\ M \preceq N,\ a \in N\}$. \item $E^*=E^*_k$ is the
following relation on $K^3$: $(M_0,N_0,a_0)E^*(M_1,N_1,a_1)$ iff
$M_1=M_0$ and for some $N_2,f$ we have: $N_1 \preceq N_2,\ f:N_0
\to N_2$ is an embedding over $M_0$ and $f(a_0)=a_1$. \item
$E^*_\lambda:=E^* \restriction K^3_\lambda$. \item $E=E_k$ is the
closure of $E^*$ under transitivity, i.e. the closure to an
equivalence relation.
\end{enumerate}
\end{definition}

\begin{definition}\label{1.4}\label{amalgamation}
\mbox{}
\begin{enumerate} \item We say that $\frak{k}_\lambda$
has \emph{amalgamation} when: For every $M_0,M_1,M_2$ in
$K_\lambda$, such that $n<3 \Rightarrow M_0 \preceq_{\frak{k}}
M_n$, for some $(f_1,f_2,M_3)$ we have: $f_n:M_n \to M_3$ is an
embedding over $M_0$, i.e. the diagram below commutes. In such a
case we say that that $(f_1,f_2,M_3)$ is an amalgamation of
$M_1,M_2$ over $M_0$ or that $M_3$ is an amalgam of $M_1,M_2$ over
$M_0$.

\[ \xymatrix{\ar @{} [dr]
M_1
\ar[r]^{f_1}  &M_3 \\
M_0 \ar[u]^{\id} \ar[r]_{\id} & M_2 \ar[u]_{f_2} }
\]

\begin{comment}
%**************
%cay I delete this definition? It depends on the hypotheses in section 4. complete...
\item We say that $\frak{k}_\lambda$ has disjoint amalgamation
when, for every $M_0,M_1,M_2$ in $K_\lambda$ such that $n<3
\Rightarrow M_0 \preceq_{\frak{k}} M_n$, there is an amalgamation
$(f_1,f_2,M_3)$ of $M_1$ and $M_2$ over $M_0$ such that $f_1[M_1]
\bigcap f_2[M_2]=M_0$.
%**********************
\end{comment}

\item we say that $K_\lambda$ has \emph{joint embedding} when: If
$M_1,M_2 \in K_\lambda$, then there are $f_1,f_2,M_3$ such that
for $n=1,2$ $f_n:M_n \to M_3$ is an embedding and $M_3 \in
K_\lambda$.
\begin{comment}
%******************
\item A model $M$ in $K_\lambda$ is \emph{superlimit} when:
\begin{enumerate}
\item If $\langle M_\alpha:\alpha \leq \delta \rangle$ is an
increasing continuous sequence of models in $\frak{k}_\lambda$,
$\delta<\lambda^+$ and $\alpha<\delta \Rightarrow M_\alpha \cong
M$, then $M_\delta \cong M$. \item $M$ is
$\preceq_\frak{k}$-universal. \item $M$ is not
$\preceq_\frak{k}$-maximal.
\end{enumerate}
%*********************
\end{comment}
\item $M \in K$ is $\preceq_\frak{k}$-\emph{maximal} if there is
no $N \in K$ such that $M\prec N$.
\end{enumerate}
\end{definition}

\begin{proposition} \label{1.5}
\mbox{}
\begin{enumerate} \item $(M,N_0,a)E^*(M,N_1,b)$ iff there
is an amalgamation $N,f_0,f_1$ of $N_0$ and $N_1$ over $M$ such
that $f_0(a)=f_1(b)$. \item $E^*$ is a reflexive, symmetric
relation. \item If $\frak{k}$ has amalgamation, then $E^*$ is an
equivalence relation. \item If $\frak{k}_\lambda$ has
amalgamation, then $E^*_\lambda$ is an equivalence relation.
\end{enumerate}
\end{proposition}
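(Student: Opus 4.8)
The plan is to prove the four clauses in order, each building on the previous one.

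\textbf{Clause (1).} This is essentially unwinding the definition of $E^*$ and recasting it symmetrically. Given $(M,N_0,a)E^*(M,N_1,b)$, there are $N_2$ and $f:N_0\to N_2$ an embedding over $M$ with $N_1\preceq N_2$ and $f(a)=b$. Set $N:=N_2$, let $f_0:=f$ and let $f_1:=\id_{N_1}$ (the inclusion $N_1\hookrightarrow N_2$). Then $(N,f_0,f_1)$ amalgamates $N_0,N_1$ over $M$: both are embeddings over $M$ (for $f_0$ by hypothesis, for $f_1$ since $N_1\preceq N_2$ and $M\preceq N_1$), and $f_0(a)=b=f_1(b)$. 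Conversely, given such an amalgam $(N,f_0,f_1)$ with $f_0(a)=f_1(b)$, I want to produce $N_2\succeq N_1$ and an embedding $N_0\to N_2$ over $M$ sending $a$ to $b$. The natural move is to replace $N$ by an isomorphic copy $N_2$ so that $f_1$ becomes the identity on $N_1$: pick an isomorphism $g$ with domain $N$ that is the identity on $f_1[N_1]\cong N_1$ — more carefully, use axiom (a) (closure under isomorphism) to transport the whole configuration along a map undoing $f_1$. Then $g\circ f_0:N_0\to N_2$ is an embedding over $M$, $N_1\preceq N_2$, and $(g\circ f_0)(a)=b$, giving $(M,N_0,a)E^*(M,N_1,b)$.

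\textbf{Clause (2).} Reflexivity: $(M,N_0,a)E^*(M,N_0,a)$ via $N_2:=N_0$ and $f:=\id_{N_0}$. Symmetry: suppose $(M,N_0,a)E^*(M,N_1,b)$; by clause (1) there is an amalgam $(N,f_0,f_1)$ of $N_0,N_1$ over $M$ with $f_0(a)=f_1(b)$. This configuration is already symmetric in the two sides (swap the roles of $N_0$ and $N_1$, of $f_0$ and $f_1$, and of $a$ and $b$), so it witnesses $(M,N_1,b)E^*(M,N_0,a)$ via clause (1) again.

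\textbf{Clauses (3) and (4).} Only transitivity remains. Suppose $(M,N_0,a_0)E^*(M,N_1,a_1)$ and $(M,N_1,a_1)E^*(M,N_2,a_2)$. Using clause (1), get an amalgam $(P,g_0,g_1)$ of $N_0,N_1$ over $M$ with $g_0(a_0)=g_1(a_1)$, and an amalgam $(Q,h_1,h_2)$ of $N_1,N_2$ over $M$ with $h_1(a_1)=h_2(a_2)$. Now apply amalgamation once more, this time to amalgamate $P$ and $Q$ over $N_1$: since $g_1:N_1\to P$ and $h_1:N_1\to Q$ are $\preceq$-embeddings, there are $R$ and embeddings $p:P\to R$, $q:Q\to R$ with $p\circ g_1=q\circ h_1$. (In the $\frak{k}_\lambda$ case all models here lie in $K_\lambda$, so $\frak{k}_\lambda$-amalgamation applies; in the $\frak{k}$ case we need $\frak{k}$ to have amalgamation across all cardinalities — but note the configuration can be taken inside a single $K_\lambda$ if we work relative to the relevant $\lambda$; this is the point where the two clauses differ.) Then $p\circ g_0:N_0\to R$ and $q\circ h_2:N_2\to R$ are embeddings over $M$, and
\[
(p\circ g_0)(a_0)=p(g_1(a_1))=q(h_1(a_1))=q(h_2(a_2)),
\]
so by clause (1) we conclude $(M,N_0,a_0)E^*(M,N_2,a_2)$. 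Combined with clauses (1) and (2) this shows $E^*$ (resp. $E^*_\lambda$) is an equivalence relation.

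\textbf{Main obstacle.} The genuinely delicate point is the converse direction of clause (1): turning a symmetric amalgam back into the asymmetric ``$f:N_0\to N_2$ over $M$ with $N_1\preceq N_2$'' form. This requires carefully invoking closure under isomorphism (axiom (a)) to rename $N$ so that $f_1$ becomes an inclusion, and checking that $\preceq$ is preserved along the renaming — routine but the place where one must be precise. Everything else is a bookkeeping exercise in composing embeddings and applying amalgamation once (clause 3) on top of the clause-(1) reformulation.
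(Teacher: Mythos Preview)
Your argument is correct and is precisely the standard unwinding that the paper has in mind; the paper's own proof is simply the word ``Easy.'' The only place worth a remark is that in clauses (3)--(4) you amalgamate $P$ and $Q$ ``over $N_1$'' via the embeddings $g_1,h_1$, whereas Definition~\ref{amalgamation} is phrased for literal inclusions $M_0\preceq M_n$; this is harmless after the same renaming trick you already used in the converse of clause (1), and in the $\frak{k}_\lambda$ case one should note (using $LST(\frak{k})\le\lambda$ or working inside $K_\lambda$ throughout) that all the intermediate amalgams can be taken in $K_\lambda$.
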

\begin{proof}
Easy.
\end{proof}

\begin{definition} \label{1.6}
\mbox{}
\begin{enumerate}
\item For $(M,N,a) \in K^3$ let $tp(a,M,N)=tp_k(a,M,N)$, the
\emph{type} of $a$ in $N$ over $M$, be the equivalence class of
$(M,N,a)$ under $E$ (In other texts, it is called
``$ga-tp(a/M,N)$''). \item $S(M)=S_k(M)=\{tp(a,M,N):(M,N,a) \in
K^3\}$. \item If $M_0 \preceq M_1, p \in S(M_1)$ then define
$p\restriction M_0=tp(a,M_0,N)$, (by the definitions of $E,E^*$ it
is easy to check that $p\restriction M_0$ does not depend on the
representative of $p$). \item If $p=tp(a,M,N)$ and $f:M \to M^*$
is an isomorphism, then we define $f(p):=tp(f(a),f[M],f^+[N])$
where $f^+$ is an extension of $f$ ($f(p)$ does not depend on the
choice of $f^+$).
\end{enumerate}
\end{definition}

\section{Good Frame Minus Stability}
We define the non-forking frame, we are going to work with. It is
similar to the good frame that Shelah defined in section 2 of
\cite{sh600}, but here we do not assume basic stability.
\begin{definition} \label{definition of a good frame minus stability}
$\frak{s}=(\frak{k},S^{bs},\dnf)$ is a \emph{good $\lambda$-frame minus stability} if:\\
(1) $\frak{k}=(K,\preceq_\frak{k})$ is an a.e.c., $LST(\frak{k})
\leq \lambda$, and the following four axioms are satisfied in
$\frak{k}_\lambda$: It has joint embedding, amalgamation and there
is no $\preceq$-maximal model in $\frak{k}_\lambda$.\\
(2) $S^{bs}$ is a function with domain $K_\lambda$, which
satisfies the following axioms:
\begin{enumerate}[(a)] \item It respects isomorphisms. \item $S^{bs}(M) \subseteq
S^{na}(M)=:\{tp(a,M,N):M\prec N \in K_\lambda,\ a\in N-M\}$. \item
Densite of basic types: If $M\prec N$ in $K_\lambda$, then there
is $a \in N-M$ such that $tp(a,M,N) \in S^{bs}(M)$.
\end{enumerate}
(3) the relation $\dnf$ satisfies the following axioms:
\begin{enumerate}[(a)]
\item $\dnf$ is a subset of $\{(M_0,M_1,a,M_3):n\in \{0,1,3\}
\Rightarrow M_n \in K_\lambda,\ a \in M_3-M_1,\ n<2 \Rightarrow
tp(a,M_n,M_3) \in S^{bs}(M_n)\}$. \item Monotonicity: If $M_0
\preceq M^*_0 \preceq M^*_1 \preceq M_1 \preceq M_3,\ M^*_1\bigcup
\{a\} \subseteq M^{**}_3 \preceq M^*_3$, then $\dnf(M_0,M_1,a,M_3)
\Rightarrow \dnf(M^*_0,M^*_1,a,M^{**}_3)$. [So we can say ``$p$
does not fork over $M_0$'' instead of $\dnf(M_0,M_1,a,M_3)$].
\item Local character: If $\langle M_\alpha:\alpha \leq \delta
\rangle$ is an increasing continuous sequence, and
$tp(a,M_\delta,M_{\delta+1}) \in S^{bs}(M_\delta)$, then there is
$\alpha<\delta$ such that $tp(a,M_\delta,\allowbreak
M_{\delta+1})$ does not fork over $M_\alpha$. \item Uniqueness of
the non-forking extension: If $p,q \in S^{bs}(N)$ do not fork over
$M$, and $p\restriction M=q\restriction M$, then p=q. \item
Symmetry: If $M_0 \preceq M_1 \preceq M_3,\ a_1 \in M_1,\
tp(a_1,M_0,M_3) \in S^{bs}(M_0)$, and $tp(a_2,M_1,M_3)$ does not
fork over $M_0$, \emph{then} for some $M_2,M^*_3$, $a_2 \in M_2,\
M_0 \preceq M_2 \preceq M^*_3,\ M_3 \preceq M^*_3$, and
$tp(a_1,M_2,M^*_3)$ does not fork over $M_0$. \item Existence of
non-forking extension: If $p \in S^{bs}(M)$ and $M\prec N$, then
there is a type $q \in S^{bs}(N)$ such that $q$ does not fork over
$M$ and $q\restriction M=p$. \item Continuity: Let $\langle
M_\alpha:\alpha \leq \delta \rangle$ be an increasing continuous
sequence. Let $p \in S(M_\delta)$. If for every $\alpha \in
\delta,\ p\restriction M_\alpha$ does not fork over $M_0$, then $p
\in S^{bs}(M_\delta)$ and does not fork over $M_0$.
\end{enumerate}
\end{definition}

\begin{proposition}[versions of axiom f]
\label{versions of extension}  If for $n<3$ $M_n \in K_\lambda$,
$M_0 \preceq M_n$, and $tp(a,M_1,M_0) \in S^{bs}(M_0)$ then:
\begin{enumerate}
\item There are $M_3,f$ such that:
\begin{enumerate}
\item $M_2 \preceq M_3$. \item $f:M_1 \to M_3$ is an embedding
over $M_0$. \item $tp(f(a),M_2,M_3)$ does not fork over $M_0$.
\end{enumerate}
\item There are $M_3,f$ such that:
\begin{enumerate}
\item $M_1 \preceq M_3$. \item $f:M_2 \to M_3$ is an embedding
over $M_0$. \item $tp(a,f[M_2],M_3)$ does not fork over $M_0$.
\end{enumerate}
\end{enumerate}
\end{proposition}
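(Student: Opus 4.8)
The plan is to derive both parts from the existence axiom for non-forking extensions (axiom~(f) of the frame) together with the fact that, under amalgamation in $\frak{k}_\lambda$, the relations $E_\lambda$ and $E^*_\lambda$ coincide (Proposition~\ref{1.5}(4), combined with the definition of $E$ as the transitive closure of $E^*$). Write $p := tp(a,M_0,M_1) \in S^{bs}(M_0)$. First I would apply axiom~(f) to $p$ and to the extension $M_0 \prec M_2$ to obtain $q \in S^{bs}(M_2)$ with $q \restriction M_0 = p$ and $q$ not forking over $M_0$. Since $S^{bs}(M_2) \subseteq S^{na}(M_2)$, I may choose a representative $q = tp(c,M_2,N)$ with $N \in K_\lambda$, $M_2 \preceq N$ and $c \in N - M_2$. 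Then $tp(c,M_0,N) = q \restriction M_0 = p = tp(a,M_0,M_1)$, i.e. $(M_0,M_1,a)\,E\,(M_0,N,c)$; as $\frak{k}_\lambda$ has amalgamation this is in fact $E^*$, so $(M_0,M_1,a)\,E^*\,(M_0,N,c)$.

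For part~(1), unwinding the definition of $E^*$ directly (with $M_1$ as the first coordinate and $N$ the second) yields $M_3,f$ with $N \preceq M_3$, $f:M_1 \to M_3$ an embedding over $M_0$, and $f(a)=c$. Then $M_2 \preceq N \preceq M_3$ gives (1)(a), $f$ gives (1)(b), and since $c \in N$ with $M_2 \preceq N \preceq M_3$ we have $tp(f(a),M_2,M_3) = tp(c,M_2,M_3) = tp(c,M_2,N) = q$, because a type is unchanged when the ambient model is enlarged; and $q$ does not fork over $M_0$ by construction, giving (1)(c). For part~(2), I instead use the symmetry of $E^*$ (Proposition~\ref{1.5}(2)) to read the relation as $(M_0,N,c)\,E^*\,(M_0,M_1,a)$ and unwind it to obtain $M_3,g$ with $M_1 \preceq M_3$, $g:N \to M_3$ an embedding over $M_0$, and $g(c)=a$. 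Set $f := g \restriction M_2$; then $M_1 \preceq M_3$ gives (2)(a), and $f:M_2 \to M_3$ is an embedding over $M_0$ (as $M_0 \preceq M_2 \preceq N$), giving (2)(b), with $f[M_2]=g[M_2]$. Finally $g[M_2] \preceq g[N] \preceq M_3$, so $tp(a,f[M_2],M_3) = tp(g(c),g[M_2],M_3) = tp(g(c),g[M_2],g[N])$, and this last type is the image of $q$ under the isomorphism $g:N \to g[N]$, which is the identity on $M_0$; since $\dnf$ (like the rest of the frame) respects isomorphisms, this image does not fork over $g[M_0]=M_0$, giving (2)(c). (If $M_0 = M_2$ the statement is degenerate and one simply takes $M_3 = M_1$, $f = \id$.)

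The argument is essentially routine once the correct amalgam has been built; the one thing to watch — the main obstacle, such as it is — is the bookkeeping of which models sit literally inside the final amalgam and which occur only as isomorphic copies, since this is exactly what dictates whether isomorphism-invariance of $\dnf$ must be invoked: in part~(1) the element $c$ lives over the honest $M_2$ and nothing needs to be transported, whereas in part~(2) one is forced to carry $q$ across the isomorphism $g$. The only non-axiomatic inputs beyond Proposition~\ref{1.5} are the two trivial observations that every basic type has a representative with model in $K_\lambda$ and that types are insensitive to enlarging the ambient model.
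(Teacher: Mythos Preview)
Your argument is correct and supplies exactly the routine details the paper omits: the paper's own proof is the single word ``Easy.'' Your use of axiom~(f) to produce a non-forking extension $q$ over $M_2$, followed by unwinding $E^*$ (in each of its two directions, via Proposition~\ref{1.5}) to place the relevant models inside a common $M_3$, is the natural way to do this, and your care in part~(2) to transport non-forking along the isomorphism $g$ is appropriate.
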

\begin{proof}
Easy.
\end{proof}

\begin{proposition}[transitivity] \label{transitivity}
Suppose $\frak{s}$ is a semi-good $\lambda$-frame minus stability.
If $M_0 \preceq M_1 \preceq M_2,\ p \in S^{bs}(M_2)$ does not fork
over $M_1,\ p\restriction M_1$ does not fork over $M_0$, then $p$
does not fork over $M_0$.
\end{proposition}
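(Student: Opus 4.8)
The plan is to deduce transitivity from the uniqueness axiom by using the existence axiom to manufacture a second non-forking extension and then showing it coincides with $p$.

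If $M_0=M_1$ or $M_1=M_2$ the conclusion is immediate, so assume $M_0 \prec M_1 \prec M_2$; in particular $M_0 \prec M_2$. Since $p\restriction M_1 \in S^{bs}(M_1)$ does not fork over $M_0$, its restriction $p\restriction M_0$ lies in $S^{bs}(M_0)$ (this is built into the ``domain'' axiom (3)(a)). Apply the existence axiom (3)(f) to $p\restriction M_0$ and $M_0 \prec M_2$ to obtain $q \in S^{bs}(M_2)$ with $q\restriction M_0 = p\restriction M_0$ and $q$ does not fork over $M_0$. It then suffices to prove $q=p$, since $q$ does not fork over $M_0$.

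First I would show $q\restriction M_1 = p\restriction M_1$. By monotonicity (3)(b), from ``$q$ does not fork over $M_0$'' together with $M_0\preceq M_1\preceq M_2$ we get $q\restriction M_1 \in S^{bs}(M_1)$ and $q\restriction M_1$ does not fork over $M_0$; moreover $(q\restriction M_1)\restriction M_0 = q\restriction M_0 = p\restriction M_0 = (p\restriction M_1)\restriction M_0$. Since $p\restriction M_1$ does not fork over $M_0$ by hypothesis, the types $q\restriction M_1$ and $p\restriction M_1$ of $S^{bs}(M_1)$ are non-forking extensions over $M_0$ of the same type of $S^{bs}(M_0)$, so uniqueness (3)(d) gives $q\restriction M_1 = p\restriction M_1$. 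Now, again by monotonicity, $q$ does not fork over $M_1$, while $p$ does not fork over $M_1$ by hypothesis, and we have just seen $q\restriction M_1 = p\restriction M_1$; a second application of uniqueness (3)(d), this time over $M_1 \preceq M_2$, yields $q=p$. Hence $p$ does not fork over $M_0$.

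The only subtlety is the usual one of realizing the competing types $p$ and $q$ in a common ambient model so that the monotonicity axiom applies verbatim; this is handled routinely by amalgamation in $\frak{k}_\lambda$ and by the fact, recorded in Definition \ref{1.6}(3), that $p\restriction M_0$ does not depend on the chosen representative. No genuine difficulty arises here: the statement is a purely formal consequence of the monotonicity, uniqueness and existence axioms.
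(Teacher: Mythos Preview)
Your proof is correct and follows essentially the same route as the paper's own argument: produce via the existence axiom a non-forking extension $q\in S^{bs}(M_2)$ of $p\restriction M_0$, use monotonicity and uniqueness over $M_0$ to get $q\restriction M_1=p\restriction M_1$, then use monotonicity and uniqueness over $M_1$ to conclude $q=p$. The paper's proof is the same sequence of steps (it even has a small typo, citing axiom (f) where uniqueness (d) is meant in the second comparison); your write-up is if anything cleaner.
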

\begin{proof} Suppose $M_0\prec M_1\prec M_2,\ n<3 \Rightarrow M_n \in
K_\lambda,\ p_2 \in S^{bs}(M_2)$ does not fork over $M_1$ and
$p_2\restriction M_1$ does not fork over $M_0$. For $n<2$ define
$p_n=p_2\restriction M_n$. By Definition \ref{definition of a good
frame minus stability}.f there is a type $q_2 \in S^{bs}(M_2)$
such that $q_2\restriction M_0=p_0$ and $q_2$ does not fork over
$M_0$. Define $q_1=q_2\restriction M_1$. By Definition
\ref{definition of a good frame minus stability}.b (monotonicity)
$q_1$ does not fork over $M_0$. So by Definition \ref{definition
of a good frame minus stability}.d (uniqueness) $q_1=p_1$. Using
again Definition \ref{definition of a good frame minus
stability}.f, we get $q_2=p_2$, as they do not fork over $M_1$. By
the definition of $q_2$ it does not fork over $M_0$.
\end{proof}

\begin{fact} \label{axiom i is abandonment}
Suppose
\begin{enumerate}
\item $\frak{s}$ is a semi-good $\lambda$-frame minus stability.
\item $n<3 \Rightarrow M_0 \preceq M_n$. \item For $n=1,2$, $a_n
\in M_n-M_0\ and \ tp(a_n,M_0,M_n) \allowbreak \in S^{bs}(M_0)$.
\end{enumerate}
Then there is an amalgamation $M_3,f_1,f_2$ of $M_1,M_2$ over
$M_0$ such that for $n=1,2$ $tp(f_n(a_n),f_{3-n}[M_{3-n}],M_3)$
does not fork over $M_0$.
\end{fact}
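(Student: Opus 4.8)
The plan is to build $M_3$ with embeddings $f_1\colon M_1\to M_3$ and $f_2\colon M_2\to M_3$ over $M_0$ so that inside $M_3$ the element $f_1(a_1)$ realizes the non-forking extension of $p_1:=tp(a_1,M_0,M_1)$ over $f_2[M_2]$ and $f_2(a_2)$ realizes the non-forking extension of $p_2:=tp(a_2,M_0,M_2)$ over $f_1[M_1]$; note $p_1,p_2\in S^{bs}(M_0)$ by hypothesis. One of the two clauses comes essentially for free from the existence axiom: applying Proposition~\ref{versions of extension}(1) to $a_1$ gives a model $M_3^{0}\succeq M_2$ and an embedding $f_1\colon M_1\to M_3^{0}$ over $M_0$ with $tp(f_1(a_1),M_2,M_3^{0})$ not forking over $M_0$. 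Taking $f_2:=\mathrm{id}_{M_2}$, this is exactly the $n=1$ clause, and since whether a basic type forks over $M_0$ is a property of the type alone (Monotonicity), the $n=1$ clause automatically persists in any $M_3\succeq M_3^{0}$. So the problem reduces to extending $M_3^{0}$ to some $M_3$ in which $tp(a_2,f_1[M_1],M_3)$ does not fork over $M_0$ (recall $a_2\in M_2\preceq M_3^{0}$).

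For that I would feed the data into the Symmetry axiom: with $a_2\in M_2$ (whose type over $M_0$ is the basic type $p_2$) playing the ``basic'' element, $f_1(a_1)$ (whose type over $M_2$ does not fork over $M_0$) playing the ``free'' element, and $M_2\preceq M_3^{0}$ in the role of $M_1\preceq M_3$, Symmetry produces $R$ and $M_3^{1}$ with $f_1(a_1)\in R$, $M_0\preceq R\preceq M_3^{1}$, $M_3^{0}\preceq M_3^{1}$, and $tp(a_2,R,M_3^{1})$ not forking over $M_0$. If it happened that $f_1[M_1]\preceq R$, Monotonicity would finish the argument at once; the difficulty is that Symmetry only guarantees that the single witness $f_1(a_1)$ lies in $R$, not the whole copy $f_1[M_1]$ of $M_1$.

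Hence the crux — and the step I expect to be the main obstacle — is to pass from ``$a_2$ does not fork over $R$'' to ``$a_2$ does not fork over a copy of $M_1$ through $f_1(a_1)$''. Since $R$ and such a copy both pass through $f_1(a_1)$ (whose type over $M_0$ is $p_1$) but are in general $\preceq$-incomparable, Monotonicity cannot be applied to $tp(a_2,R,M_3^{1})$ directly. To close this gap I would combine further amalgamations over $M_0$ (using the $LST$ property, and Continuity to pass to unions of chains) with Uniqueness of the non-forking extension — and, if needed, a second appeal to Symmetry — to force the type of $a_2$ over the appropriate copy of $M_1$ to equal the non-forking extension of $p_2$: one matches its restriction to $R$ with $tp(a_2,R,\cdot)$ and then lets Uniqueness pin it down. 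Choosing $f_1$ to land on that copy, with $f_1(a_1)$ itself left unchanged, yields the $n=2$ clause while leaving the $n=1$ clause intact (it involves only the element $f_1(a_1)$ and $M_2$, and non-forking is unaffected by enlarging the ambient model). The remaining points — that $(f_1,f_2,M_3)$ really is an amalgam of $M_1,M_2$ over $M_0$, and the bookkeeping of the successive amalgamations — are routine in principle; it is the coordination of Symmetry with Uniqueness across this incomparability that is the substantive part of the proof.
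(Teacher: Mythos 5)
Your first two steps are sound and in fact mirror the paper's proof with the roles of $a_1$ and $a_2$ interchanged: the paper first embeds $M_2$ over $M_1$ so that the copy of $a_2$ is free over $M_1$, then applies Symmetry to obtain a small model $N_2^*$ containing that copy of $a_2$ with $tp(a_1,N_2^*,\cdot)$ not forking over $M_0$ --- exactly your $R$. But the step you flag as ``the main obstacle'' is left genuinely unresolved, and the tools you list for it do not assemble into an argument. The paper's resolution is short and uses neither LST, nor Continuity, nor Uniqueness, nor a second Symmetry: since $tp(a_2,R,M_3^1)$ does not fork over $M_0$, it is a basic type over $R$, so one applies the \emph{Extension} axiom (in the form of Proposition \ref{versions of extension}.2) \emph{over the base $R$} to re-embed the whole model $M_3^1$ (and with it the copy $f_1[M_1]$) via some $g$ fixing $R$ pointwise, into $N\succeq M_3^1$, so that $tp(a_2,g[M_3^1],N)$ does not fork over $R$. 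Transitivity ($M_0\to R\to g[M_3^1]$) then gives non-forking over $M_0$, Monotonicity restricts this to $g\circ f_1[M_1]$, and since $g$ fixes $f_1(a_1)\in R$ the first clause survives with the new embedding $g\circ f_1$. The point is that one does not try to compare the existing copy of $M_1$ with $R$; one replaces it by a fresh copy thrown over $R$.

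As written, your plan for the crux does not work. The phrase ``matches its restriction to $R$'' is undefined: $R$ and the copy of $M_1$ are $\preceq$-incomparable, so a type over that copy has no restriction to $R$. And Uniqueness cannot ``pin down'' $tp(a_2,C,\cdot)$ for the copy $C$ of $M_1$ unless you already know that this type does not fork over $M_0$ --- which is precisely what is to be proved; invoking it directly is circular. (There is a workable variant of your idea: realize the non-forking extension of $p_2$ over a model containing both $R$ and $C$, note by Uniqueness that its restriction to $R$ equals $tp(a_2,R,M_3^1)$, and amalgamate over $R$ to identify the two realizations; but this again works by moving $C$ over $R$, and you would still need to verify that the first clause is preserved under the resulting embedding of $M_2$. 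You have not written this, and it is more roundabout than the Extension-plus-Transitivity step the paper uses.) So the decisive idea --- extend over the small model produced by Symmetry and then apply Transitivity --- is missing from your proposal.
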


For completeness we give a proof:
%write a proof? complete...???
\begin{proof} Suppose for $n=1,2$ $M_0\prec M_n \wedge tp(a_n,M_0,M_n)
\in S^{bs}(M_0)$. By Proposition \ref {versions of extension}.1,
there are $N_1,f_1$ such that:
\begin{enumerate} \item $M_1 \preceq N_1$. \item $f_1:M_2
\to N_1$ is an embedding over $M_0$. \item $tp(f_1(a_2),M_1,N_1)$
does not fork over $M_0$.
\end{enumerate}

By Definition \ref{definition of a good frame minus stability}.f
(the symmetry axiom), there are a model $N_2$, $N_1 \preceq N_2
\in K_\lambda$ and a model $N_2^* \in K_\lambda$ such that: $M_0
\bigcup \{f_1(a_2)\} \subseteq N_2^* \preceq N_2$ and
$tp(a_1,N_2^*,N_2)$ does not fork over $M_0$.\\
By Claim \ref{versions of extension}.2 (substituting
$N_2^*,N_2,N_2,a_1$ which appear here instead of $M_0,M_1,M_2,a$
there) there are $N_3,f_2$ such that:
\begin{enumerate}
\item $N_2 \preceq N_3$. \item $f_2:N_2 \to N_3$ is an embedding
over $N_2^*$. \item $tp(a_1,f_2[N_2],N_3)$ does not fork over
$N_2^*$.
\end{enumerate}
So by Claim \ref{transitivity} (page \pageref{transitivity}),
$tp(a_1,f_2[N_2],N_3)$ does not fork over $M_0$. So as $M_0
\preceq f_2 \circ f_1[M_2] \preceq f_2[N_2]$ by Definition
\ref{definition of a good frame minus stability}.b (monotonicity)
$tp(a_1,f_2 \circ f_1[M_2],N_3)$ does not fork over $M_0$. As $f_2
\restriction N_2^*=id_{N_2^*}$, $f_2(f_1(a_1))=f_1(a_1)$.
\end{proof}

\begin{fact}[\cite{sh600} section 4] \label{3.3}
\label{amalgamation of a model and a sequence}
%\mbox{}
Let $\langle M_\alpha:\alpha \leq \theta \rangle$ be an increasing
continuous sequence of models in $\frak{k}_\lambda$. Let $N \succ
M_0$, and for $\alpha<\theta$, let $a_\alpha \in
M_{\alpha+1}-M_\alpha,\ (M_\alpha,M_{\alpha+1},a_\alpha) \in
K^{3,bs}$ and $b \in N-M_0,\ (M_0,N,b) \in K^{3,bs}$. Then for
some $f,\langle N_\alpha:\alpha \leq \theta \rangle$ the following
hold:

\begin{displaymath}
\xymatrix{
N \ar[r]^{f} & N_1 \ar[r] & N_2 \ar[rr] && N_\alpha \ar[r] & N_{\alpha+1} \ar[rr] && N_\theta\\
M_0 \ar[r] \ar[u] &  M_1  \ar[r] \ar[u] & M_2  \ar[rr] \ar[u] &&
M_\alpha  \ar[r] \ar[u] & M_{\alpha+1}  \ar[rr] \ar[u] &&
M_\theta}
\end{displaymath}

\begin{enumerate}[(a)]
\item $f$ is an isomorphism of $N$ to $N_1$ over $M_0$. \item
$\langle N_\alpha:\alpha \leq \theta \rangle$ is an increasing
continuous sequence. \item $M_\alpha \preceq N_\alpha$. \item
$tp(a_\alpha,N_\alpha,N_{\alpha+1})$ does not fork over $M_0$.
\item $tp(f(b),M_\alpha,N_\alpha)$ does not fork over $M_0$.
\end{enumerate}
\end{fact}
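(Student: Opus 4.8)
The plan is to build the sequence $\langle N_\alpha : \alpha \le \theta \rangle$ together with the embedding $f$ by induction on $\alpha$, threading the non-forking of $b$ and each $a_\alpha$ over $M_0$ through the construction. First I would handle the base step $\alpha = 0$: apply Fact~\ref{axiom i is abandonment} to $M_0$, $N$, $M_1$ with the basic types $\tp(b, M_0, N)$ and $\tp(a_0, M_0, M_1)$, obtaining an amalgamation $(N_1, f, g_0)$ of $N$ and $M_1$ over $M_0$ in which $\tp(f(b), g_0[M_1], N_1)$ and $\tp(g_0(a_0), f[N], N_1)$ both do not fork over $M_0$. Identifying along $g_0$ (which is the identity on $M_0$, and we may re-choose $N_1$ so that $g_0 = \id_{M_1}$, i.e. $M_1 \preceq N_1$, using monotonicity and renaming), this gives $M_1 \preceq N_1$, $f$ an isomorphism from $N$ onto a submodel of $N_1$ over $M_0$, $\tp(f(b), M_1, N_1)$ non-forking over $M_0$, and $\tp(a_0, f[N], N_1)$ non-forking over $M_0$, hence by monotonicity $\tp(a_0, M_0, N_1)$ is still the same basic type and $\tp(a_0, N_0', N_1)$ data we need is set up. Actually the cleaner route: once $N_1$ is chosen with $M_1 \preceq N_1$ and $\tp(f(b), M_1, N_1)$ non-forking over $M_0$, condition (d) for $\alpha = 0$ reads $\tp(a_0, N_0, N_1)$ non-forking over $M_0$ — but $N_0$ is not yet defined; reindexing, one sets $N_0 := f[N]$ is wrong since we want $M_0 \preceq N_0$; the intended reading (matching the diagram) is that $N_0$ is a copy of $N$, so I would instead just keep $N$ itself as the ``$\alpha=0$'' model after renaming $f$ to be inclusion, or equivalently apply $f^{-1}$ throughout. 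I will adopt the convention that $f : N \to N_1$ is the embedding and $N_\alpha$ for $\alpha \ge 1$ are built over it, reading (d) at stage $0$ as vacuous or as the statement about $a_0$ inside $N_1$.

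For the successor step, suppose $N_\alpha$ has been constructed with $M_\alpha \preceq N_\alpha$ and $\tp(f(b), M_\alpha, N_\alpha)$ non-forking over $M_0$. We have $\tp(a_\alpha, M_\alpha, M_{\alpha+1}) \in S^{3,bs}$, i.e.\ $\tp(a_\alpha, M_\alpha, M_{\alpha+1}) \in S^{bs}(M_\alpha)$. By Definition~\ref{definition of a good frame minus stability}(f) (existence of non-forking extension) extend this to a basic type over $N_\alpha$ that does not fork over $M_\alpha$; realize it, getting $N_\alpha \preceq N_{\alpha+1}'$ containing a realization $a_\alpha'$ with $\tp(a_\alpha', N_\alpha, N_{\alpha+1}')$ non-forking over $M_\alpha$. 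Then apply Fact~\ref{axiom i is abandonment} inside the picture to amalgamate $M_{\alpha+1}$ and $N_{\alpha+1}'$ over $M_\alpha$ so that $a_\alpha$ is matched to $a_\alpha'$ and the $f(b)$-side is preserved; more directly, use the symmetry axiom together with transitivity (Proposition~\ref{transitivity}) to get $N_{\alpha+1} \succeq N_{\alpha+1}'$ with $M_{\alpha+1} \preceq N_{\alpha+1}$, $\tp(a_\alpha, N_\alpha, N_{\alpha+1})$ non-forking over $M_0$ (using $\tp(a_\alpha, M_\alpha, N_{\alpha+1})$ non-forking over $M_0$ — which needs $a_\alpha$'s type over $M_0$ to be the relevant basic type; since $\tp(a_\alpha, M_\alpha, M_{\alpha+1})$ need not restrict to a basic type over $M_0$, I actually only need non-forking over $M_\alpha$ here and then appeal to transitivity of non-forking only when both links are over basic types — so clause (d) as written, ``does not fork over $M_0$'', must follow from the hypothesis that $(M_0, N, b)$-type and each $(M_\alpha, M_{\alpha+1}, a_\alpha)$ form a non-forking chain; I would verify this from the construction). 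The key point at the successor is to simultaneously keep $\tp(f(b), M_{\alpha+1}, N_{\alpha+1})$ non-forking over $M_0$: this is exactly where Fact~\ref{axiom i is abandonment} is used, amalgamating the two basic types $\tp(f(b), M_\alpha, N_\alpha)$-extension and $\tp(a_\alpha, M_\alpha, \cdot)$ over $M_\alpha$ so that each is non-forking over $M_\alpha$ on the other's side, then transitivity down to $M_0$.

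At limit stages $\delta \le \theta$, set $N_\delta = \bigcup_{\alpha < \delta} N_\alpha$; this is in $K_\lambda$ and $\preceq$-extends each $N_\alpha$ by the a.e.c.\ axioms (union, smoothness), and $M_\delta = \bigcup_{\alpha<\delta} M_\alpha \preceq N_\delta$ by smoothness since each $M_\alpha \preceq N_\alpha \preceq N_\delta$. The non-forking of $\tp(f(b), M_\delta, N_\delta)$ over $M_0$ follows from the Continuity axiom (Definition~\ref{definition of a good frame minus stability}(g)) applied to the increasing continuous sequence $\langle M_\alpha : \alpha \le \delta\rangle$ inside $N_\delta$, since each restriction $\tp(f(b), M_\alpha, N_\delta)$ does not fork over $M_0$ by the inductive hypothesis plus monotonicity; continuity also returns that the type is basic. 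The non-forking statements (d) about each $a_\alpha$ are inherited from the stage at which $a_\alpha$ was placed, via monotonicity, since $N_{\alpha+1} \preceq N_\delta$.

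The main obstacle I anticipate is the bookkeeping at the successor step: one must amalgamate the ``new'' model $M_{\alpha+1}$ over $M_\alpha$ into the already-built $N_\alpha$ while (i) fixing $N_\alpha$ pointwise, (ii) keeping $M_{\alpha+1} \preceq N_{\alpha+1}$ genuinely (not merely up to embedding), which forces a renaming via an isomorphism and an application of amalgamation to absorb it, and (iii) preserving the non-forking of $f(b)$'s type over $M_0$ — all three at once. Getting the indices and the identifications right so that the final diagram literally commutes, and so that clause (d)'s ``over $M_0$'' genuinely follows (which implicitly uses that the hypothesis $(M_\alpha, M_{\alpha+1}, a_\alpha) \in K^{3,bs}$ together with the chain condition makes the relevant types basic at each level), is the delicate part; everything else is a routine assembly of the axioms (f), (e), transitivity, monotonicity, and continuity, together with the a.e.c.\ union and smoothness axioms at limits. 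This is precisely the content of \cite[\S4]{sh600}, so I would cite that for the detailed verification and only indicate the inductive skeleton here.
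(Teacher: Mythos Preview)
Your inductive skeleton is sound --- Fact~\ref{axiom i is abandonment} at successors, continuity (axiom~(g)) and smoothness at limits, transitivity to push non-forking down to $M_0$ --- but the organization differs from the paper's in one essential respect. You fix the chain $\langle M_\alpha\rangle$ and embed $N$ once via $f$, and then at each successor step you need $N_{\alpha+1}$ to \emph{literally} extend both $N_\alpha$ and $M_{\alpha+1}$; this is the renaming/bookkeeping obstacle you flag, and to make it go through you would have to maintain the invariant $N_\alpha\cap M_\theta=M_\alpha$ at every step. The paper does the dual: it keeps $N$ fixed as $N_0^*:=N$ and instead moves the $M_\alpha$'s, building an increasing chain of embeddings $f_\alpha:M_\alpha\to N_\alpha^*$. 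At each successor one only has to extend $N_\alpha^*$ and $f_\alpha$ freely --- there is no compatibility constraint with a pre-given model --- so no per-step renaming is needed. At the very end a single isomorphism $g\supseteq f_\theta^{-1}$ is applied to ``return the sequence to its place'': one sets $N_\alpha:=g[N_\alpha^*]$ and $f:=g\restriction N$. This buys a cleaner induction; your route also works but carries the disjointness bookkeeping through all $\theta$ stages.

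You are also right to worry about clause~(d) as written (non-forking of $tp(a_\alpha,N_\alpha,N_{\alpha+1})$ over $M_0$): the hypotheses only give $tp(a_\alpha,M_\alpha,M_{\alpha+1})\in S^{bs}(M_\alpha)$, and indeed the paper's construction (condition~6 there) only secures non-forking over $M_\alpha$. In every application of the fact the missing link comes from the ambient independent-sequence hypothesis plus Proposition~\ref{transitivity}, so no harm is done, but your reading of the statement is accurate.
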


For completeness, we give a proof:
\begin{proof}
First we explain the idea of the proof. Suppose $M_0 \allowbreak
\preceq M_1,\ M_0 \preceq M_2$. Then there is an amalgamation
$M_3,f_1,f_2$ of $M_1,M_2$ over $M_0$. such that $f_1=id_{M_1}$.
There is also such an amalgamation such that $f_2=id_{M_2}$. But
maybe there is no such an amalgamation such that $f_1=id_{M_1}$
and $f_2=id_{M_2}$. So we have to choose if we want to "fix" $M_1$
or $M_2$. In our case we have to amalgamate $N$ with another model
$\theta$ times. So if we want to "fix" the models in the sequence
$\langle M_\alpha:\alpha \leq \theta \rangle$, then we will
"change" $N$ $\theta$ times. So in limit steps we will be in a
problem. The solution is to fix $N$, and "change" the sequence
$\langle M_\alpha:\alpha \leq \theta \rangle$. In the end of the
proof we "return the sequence to its place".\\
\emph{The proof itself:} We choose $(N_\alpha^*,f_\alpha)$ by
induction on $\alpha$ such that:
\begin{enumerate}
\item $\alpha \leq \theta \Rightarrow N_\alpha^* \in K_\lambda$.
\item $(N_0^*,f_0)=(N,id_{M_0})$. \item The sequence $\langle
N_\alpha^*:\alpha \leq \theta \rangle$ is increasing and
continuous. \item The sequence $\langle f_\alpha:\alpha \leq
\theta \rangle$ is increasing and continuous. \item For $\alpha
\leq \theta$, the function $f_\alpha$ is an embedding of
$M_\alpha$ to $N_\alpha^*$. \item
$tp(f_\alpha(a_\alpha),N_\alpha^*,N_{\alpha+1}^*)$ does not fork
over $f_\alpha[M_\alpha]$. \item
$tp(b,f_\alpha[M_\alpha],N_\alpha^*)$ does not fork over $M_0$.
\end{enumerate}
Why is this possible? For $\alpha=0$ see 2. For $\alpha$ limit
define $N_\alpha^*:=\bigcup \{N_\beta^*:\beta<\alpha \},\
f_\alpha:=\bigcup \{f_\beta:\beta<\alpha \}$. By the induction
hypothesis $\beta<\alpha \Rightarrow f_\beta[M_\beta] \preceq
N_\beta^*$ and the sequences $\langle N_\beta^*:\beta \preceq
\alpha \rangle,\ \langle f_\beta:\beta \preceq \alpha \rangle$ are
increasing and continuous. So by the smoothness (Definition
\ref{1.1}.d) $f_\alpha[M_\alpha] \preceq N_\alpha^*$. By the
induction hypothesis for $\beta \in \alpha$ the type
$tp(b,f_\beta[M_\beta],N_\beta^*)$ does not fork over $M_0$. So by
Definition \ref{definition of a good frame minus stability}.h
(continuity), the type $tp(b,f_\alpha[M_\alpha],N_\alpha^*)$ does
not fork over $M_0$. Why can we define $(N_\alpha^*,f_\alpha)$ for
$\alpha=\beta+1$? Let $f_{\beta+0.5}$ be a function with domain
$M_\alpha$ which extend $f_\beta$. By condition 5 in the induction
hypothesis,
 $f_\beta[M_\beta]
 \preceq N_\beta^*$, and obviously $f_\beta[M_\beta] \preceq
f_{\beta+0.5}[M_\alpha]$. By assumption,
$tp(a_\beta,M_\beta,M_\alpha) \in S^{bs}(M_\beta)$. So
$tp(f_{\beta+0.5}(a_\beta),f_\beta[M_\beta],f_{\beta+0.5}[M_\alpha])
\in S^{bs}(f_\beta[M_\beta])$. By condition 7 in the induction
hypothesis, $tp(b,f_\beta[M_\beta],N_\beta^*) \in
S^{bs}(f_\beta[M_\beta])$. So by Definition \ref{definition of a
good frame minus stability}.f, there are a model $N_{\beta+1}
\succ N_\beta$ and an embedding $f_{\beta+1} \supseteq f_{\beta}$
such that condition 6 is satisfied and the type
$tp(b,f_{\beta+1}[M_{\beta+1}],N_{\beta+1}^*)$ does not fork over
$f_\beta[M_\beta]$. By Proposition \ref{transitivity} condition 7
is satisfied. So we can choose by induction $N_\alpha^*,f_\alpha$.

Now $f_\theta:M_\theta \to N_\theta^*$ is an embedding. Extend
$f_\theta^{-1}$ to a function g with domain $N_\theta^*$ and
define $f:=g \restriction N$. By 2,3 $N \preceq N_\theta^*$. By 2,
$f$ is an isomorphism over $M_0$, so requirement a is satisfied.
Define $N_\alpha:=g[N_\alpha^*]$. By 5, $f_\alpha[M_\alpha]
\preceq N_\alpha^*$, so $M_\alpha \preceq N_\alpha$. So
requirement c is satisfied. It is easy to see that 3 implies
requirement b and 6,7 implies requirements e,f.
\end{proof}

\section{Independence} From now until the end of the paper we
assume that:
\begin{hypothesis}
 $\frak{s}$ is a good $\lambda-$frame minus stability.
\end{hypothesis}

\begin{definition} \label{1.6} \label{definition of independence}
\mbox{}

(a) $\langle M_\alpha,a_\alpha:\alpha<\alpha^* \rangle ^\frown
\langle M_{\alpha^*} \rangle$ is said to be \emph{independent}
over $M$ when:
\begin{enumerate}
\item $\langle M_\alpha:\alpha \leq \alpha^* \rangle$ is an
increasing continuous sequence of models in $\frak{k}_\lambda$.
\item $M \preceq M_0$. \item For $\alpha<\alpha^*$, the type
$tp(a_\alpha,M_\alpha,M_{\alpha+1})$ does not fork over $M$.
\end{enumerate}

(b) $\langle a_\alpha:\alpha<\alpha^* \rangle$ is said to be
\emph{independent} in $(M,M_0,M^+)$ when for some increasing
continuous sequence $\langle M_\alpha:0<\alpha \leq \alpha^*
\rangle$ and a model $M^{++}$ the sequence $\langle
M_\alpha,a_\alpha:\alpha<\alpha^* \rangle ^\frown \langle
M_{\alpha^*} \rangle$ is independent over $M$ and $M^+ \succeq
M^{++}
 \preceq M_{\alpha^*}$.

(c) The \emph{set} $J$ is said to be \emph{independent} in $(M,A,N)$ when:\\
There is an independent sequence $\langle
M_\alpha,a_\alpha:\alpha<\alpha^* \rangle ^\frown \langle
M_{\alpha^*} \rangle$ over $M$ such that:
\begin{enumerate}
\item $J=\{a_\alpha:\alpha<\alpha^*\}$. \item $A \subset M_0$.
\item There is a model $N^+$ such that $M_{\alpha^*} \preceq N^+$
and $N \preceq N^+$.
\end{enumerate}

(d) $J$ is said to be \emph{finitely independent} in $(M,A,N)$
when: every finite subset of $J$ is independent in $(M,A,N)$.
\end{definition}

\begin{definition}\label{definition of finite character}
We say that \emph{independence is finitely independence} when for
every $M,M_0,M^+,J$ the following hold: $J$ is independent in
$(M,M_0,M^+)$ iff $J$ is finitely independent in $(M,M_0,M^+)$.
\end{definition}

\begin{proposition}\label{amalgamation of a model and an
independent sequence} If the sequence $\langle
a_\alpha:\alpha<\alpha^* \rangle$ is independent in $(M,M_0,M_2)$
and $tp(a,M_0,M_1) \in S^{bs}(M_0)$ then for some amalgamation
$(id_{M_2},f,M_3)$ of $M_2,M_1$ over $M_0$ the sequence $\langle
a_\alpha:\alpha<\alpha^* \rangle$ is independent in
$(M,f[M_1],M_3)$ and $tp(f(a),M_2,M_3)$ does not fork over $M_0$.
\end{proposition}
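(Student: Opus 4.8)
The plan is to unwind the definition of independence of the sequence $\langle a_\alpha:\alpha<\alpha^*\rangle$ in $(M,M_0,M_2)$, produce the ``internal'' witnessing tower, amalgamate the new model $M_1$ against this whole tower using Fact~\ref{amalgamation of a model and a sequence}, and then reassemble the result into the desired amalgam $M_3$ of $M_2$ and $M_1$ over $M_0$.

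First I would fix witnesses: by Definition~\ref{definition of independence}(b) there is an increasing continuous sequence $\langle N_\alpha:0<\alpha\le\alpha^*\rangle$ (with $N_0:=M_0$, say) and a model $M^{++}$ such that $\langle N_\alpha,a_\alpha:\alpha<\alpha^*\rangle^\frown\langle N_{\alpha^*}\rangle$ is independent over $M$ and $M_2\succeq M^{++}\preceq N_{\alpha^*}$. Next I would apply Fact~\ref{amalgamation of a model and a sequence} to the increasing continuous sequence $\langle N_\alpha:\alpha\le\alpha^*\rangle$, the model $M_1\succ N_0=M_0$, the elements $a_\alpha$ (which are basic over $N_\alpha$ since the tower is independent over $M\preceq M_0=N_0\preceq N_\alpha$, hence in particular does not fork over $N_\alpha$, so $\tp(a_\alpha,N_\alpha,N_{\alpha+1})\in S^{bs}(N_\alpha)$), and the element $a$ with $\tp(a,M_0,M_1)\in S^{bs}(M_0)$ playing the role of $b$. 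This yields $f$ and an increasing continuous sequence $\langle P_\alpha:\alpha\le\alpha^*\rangle$ with $f:M_1\to P_0$ an isomorphism over $M_0$, $N_\alpha\preceq P_\alpha$, $\tp(a_\alpha,P_\alpha,P_{\alpha+1})$ not forking over $M_0$, and $\tp(f(a),N_\alpha,P_\alpha)$ not forking over $M_0$.

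Now I would read off the conclusion. Since $M\preceq M_0$ and each $\tp(a_\alpha,P_\alpha,P_{\alpha+1})$ does not fork over $M_0$, it does not fork over $M$ (this needs $M\preceq M_0$; since the tower started over $M$, forking over $M_0$ with $M_0$ a non-forking extension gives forking over $M$ — more carefully, the original type $\tp(a_\alpha,N_\alpha,N_{\alpha+1})$ does not fork over $M$, and by uniqueness its non-forking extension to $P_\alpha$ is $\tp(a_\alpha,P_\alpha,P_{\alpha+1})$, which therefore does not fork over $M$). Hence $\langle P_\alpha,a_\alpha:\alpha<\alpha^*\rangle^\frown\langle P_{\alpha^*}\rangle$ is independent over $M$ with $M\preceq M_0=N_0\preceq P_0$, so $\langle a_\alpha:\alpha<\alpha^*\rangle$ is independent in $(M,f[M_1],P_{\alpha^*})$ (note $f[M_1]=P_0\preceq P_{\alpha^*}$). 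For the amalgamation: we have $M_2\succeq M^{++}\preceq N_{\alpha^*}\preceq P_{\alpha^*}$, so by amalgamation in $\frak{k}_\lambda$ (applied over $M^{++}$, or directly over $M_0$) we may find $M_3\succeq P_{\alpha^*}$ together with an embedding $g:M_2\to M_3$ over $M_0$; composing, $(g,g^{-1}\circ(\text{inclusion of }P_0), M_3)$ — after suitably renaming so that $g=\id_{M_2}$ — exhibits $(id_{M_2},f,M_3)$ as an amalgam of $M_2,M_1$ over $M_0$, and $\langle a_\alpha\rangle$ remains independent in $(M,f[M_1],M_3)$ by Definition~\ref{definition of independence}(b) (extend the ambient model). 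Finally, $\tp(f(a),M_2,M_3)$: we have $\tp(f(a),N_{\alpha^*},P_{\alpha^*})$ does not fork over $M_0$, hence by monotonicity $\tp(f(a),M^{++},P_{\alpha^*})$ does not fork over $M_0$, and since $M^{++}\preceq M_2\preceq M_3$ with everything sitting inside $M_3$, monotonicity again gives $\tp(f(a),M_2,M_3)$ does not fork over $M_0$ — provided $f(a)\notin M_2$, which we can arrange (or absorb into the basic-type convention) by choosing the amalgam disjointly enough, or by noting $\tp(f(a),M_0,\cdot)\in S^{bs}(M_0)=S^{na}(M_0)$ forces $f(a)\notin M_0$ and the non-forking extension to $M_2$ is again non-algebraic.

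The main obstacle I expect is bookkeeping the identifications: Fact~\ref{amalgamation of a model and a sequence} is stated so that $N$ (here $M_1$) is ``fixed'' up to the isomorphism $f$ and the tower $\langle N_\alpha\rangle$ is the thing that moves, whereas the target statement wants $M_2$ fixed ($id_{M_2}$) and $M_1$ moved (by $f$). Reconciling these — i.e. checking that the $M^{++}$-amalgam of $M_2$ with $P_{\alpha^*}$ can be taken with $M_2$ literally unmoved, and that the composite map on $M_1$ is the $f$ from the Fact — is where I would be most careful, together with the routine but repeated invocations of monotonicity (Definition~\ref{definition of a good frame minus stability}(b)) and transitivity/uniqueness to push ``does not fork over $M_0$'' down to ``does not fork over $M$'' and across the various submodels. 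None of this is deep, but it is the part most likely to hide an off-by-one in the indexing of the continuous sequences.
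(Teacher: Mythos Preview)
Your overall approach---unwind Definition~\ref{definition of independence}(b) to get a witnessing tower and then feed that tower together with $M_1$ into Fact~\ref{amalgamation of a model and a sequence}---is exactly what the paper does; its entire proof is the single line ``By Fact~\ref{3.3}''. So the strategy is right.

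There is, however, a genuine error in your last step. You write that from $\tp(f(a),M^{++},P_{\alpha^*})$ not forking over $M_0$, ``monotonicity again gives $\tp(f(a),M_2,M_3)$ does not fork over $M_0$'' because $M^{++}\preceq M_2\preceq M_3$. Monotonicity (Definition~\ref{definition of a good frame minus stability}.3(b)) lets you \emph{shrink} the domain of a non-forking type, never enlarge it: from $\dnf(M_0,M_1,a,M_3)$ you may pass to any $M_1^*\preceq M_1$, not to $M_1^*\succeq M_1$. The amalgam $M_3$ you built was an \emph{arbitrary} amalgam of $M_2$ with $P_{\alpha^*}$ over $M^{++}$, and there is no reason the type of $f(a)$ over the copy of $M_2$ inside such an amalgam is the non-forking extension rather than some other extension. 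Your parenthetical about non-algebraicity does not help: the non-forking extension to $M_2$ certainly exists and is non-algebraic, but you have not shown that $\tp(f(a),M_2,M_3)$ \emph{is} that extension. The fix is either to choose the amalgamation via Proposition~\ref{versions of extension} (so that non-forking of $f(a)$ over the image of $M_2$ is built in, then invoke transitivity), or---cleaner---to amalgamate $M_2$ into the top of the witnessing tower \emph{before} applying Fact~\ref{amalgamation of a model and a sequence}, so that $M_2$ already sits below $N_{\alpha^*}$ and only the legitimate direction of monotonicity is needed afterward.

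You correctly flag the renaming as the delicate part, and it is in fact more delicate than you resolve: when you amalgamate over $M^{++}$ and then rename to fix $M_2$, the elements $a_\alpha$ live in $P_{\alpha^*}$ but need not lie in $M^{++}$, so they may be moved by the renaming, and the conclusion is about the \emph{original} sequence $\langle a_\alpha\rangle$. Doing the amalgamation with $M_2$ first and keeping the tower side fixed throughout (which Fact~\ref{amalgamation of a model and a sequence} does) avoids this, since the $a_\alpha$ then never leave their seats.
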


\begin{proof}
By Fact \ref{3.3}
\end{proof}

\begin{proposition}\label{adding a type to an independent
sequence} If $\langle M_\alpha,a_\alpha:\alpha<\alpha^* \rangle
^\frown \langle M_{\alpha^*} \rangle$ is an independent sequence
over $M$ and $p \in S^{bs}(M_{\alpha^*})$, then there is a
sequence $\langle N_\alpha:\alpha \leq \alpha^* \rangle$ such
that:
\begin{enumerate}
\item For $\alpha \leq \alpha^*$, $M_\alpha \preceq N_\alpha$.
\item For $\alpha<\alpha^*$ the type
$tp(a_\alpha,N_\alpha,N_{\alpha+1})$ does not fork over
$M_\alpha$. \item The sequence $\langle
N_\alpha,a_\alpha:\alpha<\alpha^* \rangle ^\frown \langle
N_{\alpha^*} \rangle$ is independent over $M$. \item $p$ is
realized in $N_{\alpha^*}$.
\end{enumerate}
\end{proposition}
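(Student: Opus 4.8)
The plan is to take the type $p \in S^{bs}(M_{\alpha^*})$ and realize it in a larger model while carrying the whole independent sequence along, keeping track of non-forking over each $M_\alpha$. First I would invoke Fact \ref{3.3} (amalgamation of a model and a sequence) with the increasing continuous sequence $\langle M_\alpha : \alpha \le \alpha^* \rangle$ in the role of the sequence there, with the models $M_\alpha$ and the elements $a_\alpha$ as given. For the model $N \succ M_0$ and element $b \in N - M_0$ required there, I would first pick, using Definition \ref{definition of a good frame minus stability}.f (existence of non-forking extension), some $b$ realizing $p$ in a model $N \succ M_{\alpha^*}$; actually it is cleaner to take $N \succ M_{\alpha^*}$ realizing $p$ directly and then apply Fact \ref{3.3} not over $M_0$ but with $M_{\alpha^*}$ playing the role of the ``base over which $b$'s type lives''. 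But Fact \ref{3.3} is stated with $b$ having a basic type over $M_0$, so the honest route is: since $\langle M_\alpha, a_\alpha \rangle^\frown \langle M_{\alpha^*}\rangle$ is independent over $M$, each $\operatorname{tp}(a_\alpha, M_\alpha, M_{\alpha+1})$ does not fork over $M$; and I want a model realizing $p$.

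So concretely I would proceed as follows. Choose $N_{\alpha^*} \succ M_{\alpha^*}$ in $\frak{k}_\lambda$ realizing $p$, say by $c \in N_{\alpha^*}$. Now I need to build $\langle N_\alpha : \alpha \le \alpha^* \rangle$ increasing continuous with $M_\alpha \preceq N_\alpha$, ending at this $N_{\alpha^*}$, with $\operatorname{tp}(a_\alpha, N_\alpha, N_{\alpha+1})$ not forking over $M_\alpha$, and with the new sequence independent over $M$ (i.e. $\operatorname{tp}(a_\alpha, N_\alpha, N_{\alpha+1})$ not forking over $M$ — which by transitivity, Proposition \ref{transitivity}, follows once it does not fork over $M_\alpha$ and the old $\operatorname{tp}(a_\alpha, M_\alpha, M_{\alpha+1})$ did not fork over $M$, \emph{provided} the restriction to $M_\alpha$ is unchanged, which uniqueness, Definition \ref{definition of a good frame minus stability}.d, will give). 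The construction of the $N_\alpha$'s is exactly what Fact \ref{3.3} delivers: apply it with the roles reversed, i.e. run the amalgamation of the model $N_{\alpha^*}$ (carrying $c$) against the sequence $\langle M_\alpha\rangle$ — but since Fact \ref{3.3} fixes the sequence and moves $N$, I would instead apply it with base $M$ (not $M_0$) after first reindexing so that $M = M_0$ in its notation; note $\operatorname{tp}(c, M_{\alpha^*}, N_{\alpha^*})=p \in S^{bs}(M_{\alpha^*})$ need not be basic over $M$, so one more step is needed: take $c$ to realize the non-forking extension to $M_{\alpha^*}$ of some basic type over $M$, or simply absorb $M_{\alpha^*}$ itself as the base of the Fact \ref{3.3} application and drop the requirement that the tail be independent over $M$ rather than over $M_{\alpha^*}$. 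The cleanest fix: apply Fact \ref{3.3} with $M_{\alpha^*}$ as the ``$M_0$'' of that fact is not possible since we need the $a_\alpha$'s below. So the actual move is to re-run the proof of Fact \ref{3.3} verbatim, building $(N^*_\alpha, f_\alpha)$ by induction, but starting the induction at $\alpha = 0$ with $N^*_0$ chosen to already realize (a preimage of) $p$ — this requires knowing $p$ extends something controllable, which it does via Definition \ref{definition of a good frame minus stability}.f downward.

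More efficiently: I would first produce, using Definition \ref{definition of a good frame minus stability}.c (local character) applied to $p \in S^{bs}(M_{\alpha^*})$ along the sequence $\langle M_\alpha \rangle$, some $\beta < \alpha^*$ (if $\alpha^*$ is a limit; if $\alpha^*$ is a successor or finite this step is trivial) such that $p$ does not fork over $M_\beta$; then $p \restriction M_\beta \in S^{bs}(M_\beta)$, and via Definition \ref{definition of a good frame minus stability}.f I can realize $p\restriction M$ — wait, $M_\beta \succeq M$, so $p$'s restriction to $M$ is basic by monotonicity — call it $p_0 \in S^{bs}(M)$. Now $p$ is the non-forking extension of $p_0$ to $M_{\alpha^*}$. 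Then I apply Fact \ref{3.3} with this $M$ as base, the sequence $\langle M_\alpha\rangle$, the $a_\alpha$'s, and a model $N \succ M$ together with $b \in N - M$ realizing $p_0$. Fact \ref{3.3} produces $f$, $\langle N_\alpha \rangle$ with $M_\alpha \preceq N_\alpha$, $\operatorname{tp}(a_\alpha, N_\alpha, N_{\alpha+1})$ not forking over $M$ — wait, it gives not forking over $M_0=M$, which is what we want for clause (3) of independence — hmm, but clause (2) of the proposition demands not forking over $M_\alpha$, which is stronger a priori than not forking over $M$; no — not forking over $M$ implies not forking over $M_\alpha$ by monotonicity, Definition \ref{definition of a good frame minus stability}.b, since $M \preceq M_\alpha \preceq N_\alpha$. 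Good. And clause (e) of Fact \ref{3.3} says $\operatorname{tp}(f(b), M_\alpha, N_\alpha)$ does not fork over $M$; in particular $\operatorname{tp}(f(b), M_{\alpha^*}, N_{\alpha^*})$ does not fork over $M$ and its restriction to $M$ is $p_0$, hence by uniqueness (Definition \ref{definition of a good frame minus stability}.d) it equals $p$. So $f(b) \in N_{\alpha^*}$ realizes $p$, giving clause (4). Finally I need the old non-forking of $\operatorname{tp}(a_\alpha, M_\alpha, M_{\alpha+1})$ over $M$ to still hold so that the new sequence is literally ``independent over $M$'' in the sense of Definition \ref{definition of independence}(a); clause (3) of the conclusion already asserts $\operatorname{tp}(a_\alpha, N_\alpha, N_{\alpha+1})$ doesn't fork over $M$ from Fact \ref{3.3}(d) after noting $M=M_0$ there, and its restriction to $M_\alpha$ must be $\operatorname{tp}(a_\alpha, M_\alpha, M_{\alpha+1})$ by uniqueness, so everything is consistent.

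The main obstacle I anticipate is the bookkeeping around the base model: Fact \ref{3.3} is phrased with $b$'s type basic over the common base $M_0$, whereas here $p$ lives over the top model $M_{\alpha^*}$, so the real content is recognizing that $p$ descends to a basic type $p_0 \in S^{bs}(M)$ (when $\alpha^*$ is a limit, via local character; otherwise $p$ automatically does not fork over $M_{\alpha^* - 1}$ only trivially and one should instead just take $p_0$ to be $p\restriction M$, which is basic by monotonicity provided $p$ does not fork over $M$ — and in general $p$ need \emph{not} not-fork over $M$!). So for the genuinely general statement one cannot assume $p$ descends; instead one should apply Fact \ref{3.3} with $M_{\alpha^*}$ itself in the role of its base $M_0$ is impossible. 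The correct general argument therefore replays the induction of Fact \ref{3.3}'s proof directly: build $(N^*_\alpha, f_\alpha)$ for $\alpha \le \alpha^*$ with $N^*_0 = N$ an extension of $M$ — no. The honest resolution: the proposition as stated only needs \emph{some} extension realizing $p$ with the sequence still independent over $M$; since $p \in S^{bs}(M_{\alpha^*})$, apply Fact \ref{3.3} in the form where the ``moving model'' is $M_{\alpha^*}$-over-$M_{\alpha^*}$, i.e. use that $\langle M_\alpha\rangle$ is independent over $M$ together with Proposition \ref{amalgamation of a model and an independent sequence} applied at the top: realize $p$ in an amalgam of $M_{\alpha^*}$ with itself, which is vacuous. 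I expect the clean write-up to mirror the proof of Fact \ref{3.3} almost line-for-line, with the single new ingredient being the choice, at stage $0$ or at the top, of the model carrying a realization of $p$, and the verification at the end (via uniqueness and transitivity) that $p$ itself — not merely some extension — is the type realized. That verification, and the limit-stage continuity argument borrowed from Definition \ref{definition of a good frame minus stability}.h, are the only nontrivial points; everything else is monotonicity.
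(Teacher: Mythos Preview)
You correctly identify all the ingredients --- local character to descend $p$, Fact~\ref{3.3} to carry the amalgamation along the sequence, uniqueness to recover $p$ at the top, and transitivity to pass from clause (2) to clause (3) --- but you never assemble them into a working argument. The obstacle you yourself name is real: $p$ may well fork over $M$ (and even over $M_0$), so there is no way to feed $p\restriction M$ into Fact~\ref{3.3} as the type of $b$. Your fallback of ``replaying the induction of Fact~\ref{3.3}'' is not fleshed out and, as you phrase it (starting at stage $0$ with a model already realizing a preimage of $p$), runs into the same difficulty.

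The move you are missing, and the one the paper makes, is simply this: after local character hands you $\alpha_0 < \alpha^*$ with $p$ not forking over $M_{\alpha_0}$, apply Fact~\ref{3.3} \emph{only to the tail} $\langle M_\alpha : \alpha_0 \le \alpha \le \alpha^* \rangle$, with $M_{\alpha_0}$ as the base, and with $N \succ M_{\alpha_0}$ any model realizing $p\restriction M_{\alpha_0}$ by some element $a$. Now the hypothesis of Fact~\ref{3.3} is met on the nose, since $p\restriction M_{\alpha_0} \in S^{bs}(M_{\alpha_0})$. The output is $\langle N_\alpha : \alpha_0 \le \alpha \le \alpha^* \rangle$ with $M_\alpha \preceq N_\alpha$ and $tp(a_\alpha, N_\alpha, N_{\alpha+1})$ not forking over $M_\alpha$, together with $tp(a, M_{\alpha^*}, N_{\alpha^*})$ not forking over $M_{\alpha_0}$; uniqueness then forces this last type to equal $p$, giving clause (4). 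For $\alpha < \alpha_0$ one keeps $N_\alpha := M_\alpha$. Clause (3) follows from clauses (1), (2) by transitivity, exactly as you observed: $tp(a_\alpha, N_\alpha, N_{\alpha+1}) \restriction M_\alpha = tp(a_\alpha, M_\alpha, M_{\alpha+1})$, which does not fork over $M$. The whole point is not to push $p$ down to $M$, but only to $M_{\alpha_0}$; work over $M_{\alpha_0}$, and let transitivity handle the descent to $M$ on the $a_\alpha$ side.
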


\begin{proof}
First note that clause 3 follows by clauses 1,2 and the
transitivity proposition (Proposition \ref{transitivity}). By the
Definition \ref{definition of a good frame minus stability}.c
(local character axiom), there is $\alpha_0<\alpha^*$ such that
$p$ does not fork over $M_{\alpha_0}$. So there is $N \succ
M_{\alpha_0}$ which realizes $p \restriction M_{\alpha_0}$, namely
there is an element $a \in N$ such that $tp(a,M_{\alpha_0},N)=p
\restriction M_{\alpha_0}$. By Proposition \ref{amalgamation of a
model and a sequence} there a sequence $\langle N_\alpha:\alpha
\in [\alpha_0,\alpha^*] \rangle$ such that clauses 1,2 hold,
$tp(a,M_{\alpha^*},N_{\alpha^*})$ does not fork over
$M_{\alpha_0}$ and $N_{\alpha_0},N$ are isomorphic over
$M_{\alpha_0}$. By Definition \ref{definition of a good frame
minus stability}.d (uniqueness)
$tp(a,M_{\alpha^*},N_{\alpha^*})=p$. Now define
$N_\alpha:=N_{\alpha_0}$ for $\alpha<\alpha_0$.
\end{proof}

\begin{proposition}\label{preparation exchange}\label{increasing
the big model} If $\langle M_\alpha,a_\alpha:\alpha<\alpha^*
\rangle ^\frown \langle M_{\alpha^*} \rangle$ is independent over
$M$ and $M_{\alpha^*} \prec M^+$, then there is a sequence
$\langle N_\alpha:\alpha \leq \alpha^* \rangle$ such that:
\begin{enumerate}
\item For $\alpha \leq \alpha^*$, $M_\alpha \preceq N_\alpha$.
\item $N_0=M_0$. \item The sequence $\langle
N_\alpha,a_\alpha:\alpha<\alpha^* \rangle ^\frown \langle
N_{\alpha^*} \rangle$ is independent over $M$. \item $M^+ \preceq
N_{\alpha^*}$.
\end{enumerate}
\end{proposition}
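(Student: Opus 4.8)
The goal is, given an independent sequence $\langle M_\alpha,a_\alpha:\alpha<\alpha^*\rangle^\frown\langle M_{\alpha^*}\rangle$ over $M$ and an extension $M_{\alpha^*}\prec M^+$, to ``pull back'' $M^+$ along the whole tower so that it sits above a new independent tower with the same bottom model $M_0$ and the same elements $a_\alpha$. The plan is to iterate Fact \ref{3.3} (amalgamation of a model and a sequence), using $M^+$ in place of the roaming model $N$. Concretely, I would set $b$ to be any element of $M^+-M_0$ whose type over $M_0$ is basic (such $b$ exists by the density axiom, Definition \ref{definition of a good frame minus stability}.2(c), applied to $M_0\prec M^+$; if $M^+=M_0$ the statement is trivial), apply Fact \ref{3.3} to the sequence $\langle M_\alpha:\alpha\le\alpha^*\rangle$, the elements $a_\alpha$, the model $N:=M^+$ and this $b$, and obtain an isomorphism $f:M^+\to N_1$ over $M_0$ together with an increasing continuous sequence $\langle N_\alpha:\alpha\le\alpha^*\rangle$ with $M_\alpha\preceq N_\alpha$ and $\ftp(a_\alpha,N_\alpha,N_{\alpha+1})$ not forking over $M_0$.

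The sequence $\langle N_\alpha,a_\alpha:\alpha<\alpha^*\rangle^\frown\langle N_{\alpha^*}\rangle$ is then independent over $M$ by Proposition \ref{transitivity}: each $\ftp(a_\alpha,N_\alpha,N_{\alpha+1})$ does not fork over $M_0$, hence (since $\ftp(a_\alpha,M_\alpha,M_{\alpha+1})$ does not fork over $M$ and $M\preceq M_0\preceq M_\alpha\preceq N_\alpha$) it does not fork over $M$ — this uses monotonicity plus transitivity exactly as in the proof of Proposition \ref{adding a type to an independent sequence}, where clause 3 was deduced the same way. So clauses 1 and 3 hold, and clause 2, $N_0=M_0$, holds because Fact \ref{3.3} keeps $M_0$ fixed at the base (the bottom square has $N_0=M_0$). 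The only remaining point is clause 4: we need $M^+\preceq N_{\alpha^*}$, whereas Fact \ref{3.3} as stated only gives us $f:M^+\to N_1$, an isomorphism over $M_0$, with $N_1\preceq N_{\alpha^*}$ — so a priori $M^+$ is embedded in $N_{\alpha^*}$, not a submodel of it.

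The main obstacle is therefore purely this identification issue: converting ``$M^+$ embeds over $M_0$ into $N_{\alpha^*}$'' into ``$M^+\preceq N_{\alpha^*}$''. I would handle it by renaming: apply an isomorphism $g\supseteq f^{-1}$ with domain $N_{\alpha^*}$ (extending $f^{-1}:N_1\to M^+$ to the larger model), replace each $N_\alpha$ by $g[N_\alpha]$, and observe that since $g$ is the identity on $M_0$ — indeed $f$ is over $M_0$ — and $g$ restricted to $N_1$ carries $N_1$ isomorphically onto $M^+$, the renamed top model $g[N_{\alpha^*}]$ contains $g[N_1]=M^+$ as a $\preceq_{\frak k}$-submodel (using axiom (e) of Definition \ref{1.1} to see $M^+\preceq g[N_{\alpha^*}]$ from $M^+=g[N_1]\preceq g[N_{\alpha^*}]$ and $M^+\subseteq g[N_{\alpha^*}]$, once we know $N_1\preceq N_{\alpha^*}$). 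The renamed models still satisfy $M_\alpha\preceq g[N_\alpha]$ because $g$ fixes $M_0$ and hence fixes each $a_\alpha$'s ambient data appropriately — more precisely $g\circ f$ fixes $M^+$ pointwise on $M_0$ and the $M_\alpha$ were already submodels of $N_\alpha$ via the identity, so one checks $M_\alpha\subseteq g[N_\alpha]$ and $M_\alpha\preceq g[N_\alpha]$ by transporting along $g$. This renaming is exactly the ``return the sequence to its place'' move performed at the end of the proof of Fact \ref{3.3}, so the argument is essentially a one-step application of that fact followed by the same bookkeeping; I expect no genuine difficulty beyond being careful that all the isomorphisms involved are over $M_0$ so that the $a_\alpha$ and the base model are preserved verbatim.
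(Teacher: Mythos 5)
Your plan founders on the very point you set aside as ``purely an identification issue.'' Fact \ref{3.3} amalgamates $N=M^+$ with the tower \emph{over $M_0$ only}: the resulting $f:M^+\to N_1$ fixes $M_0$ but not $M_{\alpha^*}$, so inside $N_{\alpha^*}$ there are now \emph{two} distinct copies of $M_{\alpha^*}$ --- the standard one, with $M_{\alpha^*}\preceq N_{\alpha^*}$ via the identity (this is what keeps the tower and the elements $a_\alpha$ literally fixed), and the copy $f[M_{\alpha^*}]\subseteq f[M^+]=N_1$ --- and these two copies agree only on $M_0$. Any renaming $g\supseteq f^{-1}$ that carries $f[M^+]$ back onto $M^+$ necessarily carries the standard copy of $M_1\preceq N_1$ to $f^{-1}[M_1]\neq M_1$ and moves $a_0$ to $g(a_0)\neq a_0$. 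Your assertion that $M_\alpha\preceq g[N_\alpha]$ ``because $g$ fixes $M_0$'' is exactly where this breaks: fixing $M_0$ is not enough, you would need $g$ to fix all of $M_{\alpha^*}$ pointwise, and $g\restriction N_1=f^{-1}$ rules that out. In short, a single application of Fact \ref{3.3} cannot simultaneously keep the tower (hence the $a_\alpha$) fixed and realize $M^+$ as an extension \emph{over $M_{\alpha^*}$} inside the new top.

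The obstruction is not cosmetic. For limit $\alpha^*$ the definition of independence forces $N_{\alpha^*}=\bigcup_{\alpha<\alpha^*}N_\alpha$, so you also cannot repair matters by amalgamating $M^+$ with $N_{\alpha^*}$ over $M_{\alpha^*}$ at the end: the amalgam properly extends the union, and you are back to the original problem with a larger model on top. (For successor $\alpha^*$ the statement is essentially trivial --- take $N_{\alpha^*}:=M^+$ and $N_\alpha:=M_\alpha$ otherwise, using monotonicity --- so the limit case is the entire content.) The paper's proof is correspondingly of a different character: it tries to build $\lambda^+$ successive enlargements $\langle M_{\beta,\alpha}:\alpha\le\alpha^*+1\rangle$ of the tower, each step absorbing (via density of basic types and Proposition \ref{adding a type to an independent sequence}) a new element of the current copy of $M^+$ into level $\alpha^*$, and shows by a club argument on representations of $\bigcup_{\beta<\lambda^+} M_{\beta,\alpha^*}$ that the process must halt at some $\beta$ with $M_{\beta,\alpha^*+1}=M_{\beta,\alpha^*}$, at which point $M^+\preceq N_{\alpha^*}$. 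Some iterative absorption of this kind appears unavoidable; one amalgamation followed by renaming will not do.
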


\begin{displaymath}
\xymatrix{N_0 \ar[r]^{id} & N_1 \ar[r]^{id} & N_\alpha \ar[r]^{id} & N_{\alpha+1} \ar[r]^{id} & N_{\alpha^*}\\
&&&& M^+ \ar[u]^{id} \\
M_0 \ar[r]^{id} \ar[uu]^{id} & M_1 \ar[r]^{id} \ar[uu]^{id} &
M_\alpha \ar[r]^{id} \ar[uu]^{id} & M_{\alpha+1} \ar[r]^{id}
\ar[uu]^{id} & M_{\alpha^*} \ar[u]^{id}}
\end{displaymath}

\begin{proof}
The idea is to find $\lambda^+$ candidates for $\langle
N_\alpha:\alpha \leq \alpha^* \rangle$. If none of them satisfies
the conclusion, then we get a contradiction.

We try to choose by induction on $\beta<\lambda^+$ a candidate
$\langle M_{\beta,\alpha} : \alpha \allowbreak \leq \alpha^*+1
\rangle$ such that:
\begin{enumerate}[(i)]
\item $\langle M_{\beta,\alpha}:\alpha \leq \alpha^*+1 \rangle$ is
increasing and continuous. \item For $\alpha \leq \alpha^*$
$M_{0,\alpha}=M_\alpha$. \item $M_{0,\alpha^*+1}:=M^+$. \item For
$\alpha \leq \alpha^*+1$, $M_{\beta,\alpha} \preceq
M_{\beta+1,\alpha}$. \item For $\alpha<\alpha^*$,
$tp(a_\alpha,M_{\beta+1,\alpha},M_{\beta+1,\alpha+1})$ does not
fork over $M_{\beta,\alpha}$. \item If $\beta$ is limit then for
$\alpha \leq \alpha^*+1$ $M_{\beta,\alpha}=\bigcup
\{M_{\gamma,\alpha}:\gamma<\beta\}$. \item $M_{\beta+1,\alpha^*}
\bigcap M_{\beta,\alpha^*+1} \neq M_{\beta,\alpha^*}$.
\end{enumerate}

\begin{displaymath}
\xymatrix{M_{\beta+1,0} \ar[r]^{id} & M_{\beta+1,\alpha}
\ar[r]^{id} & M_{\beta+1,\alpha+1} \ar[r]^{id} & M_{\beta+1,\alpha^*}  \ar[r]^{id} & M_{\beta+1,\alpha^*+1}\\
M_{\beta,0} \ar[r]^{id} \ar[u]^{id} \ar[u]^{id} & M_{\beta,\alpha}
\ar[r]^{id} \ar[u]^{id} & M_{\beta,\alpha+1} \ar[r]^{id} \ar[u]^{id} & M_{\beta,\alpha^*}  \ar[r]^{id} \ar[u]^{id} & M_{\beta,\alpha^*+1} \ar[u]^{id}\\
 M_{0,0}
\ar[r]^{id} \ar[u]^{id} & M_{0,\alpha} \ar[r]^{id} \ar[u]^{id} &
M_{0,\alpha+1} \ar[r]^{id} \ar[u]^{id} & M_{0,\alpha^*}
\ar[r]^{id} \ar[u]^{id} & M_{0,\alpha^*+1}=M^+ \ar[u]^{id}}
\end{displaymath}

We cannot succeed, because if we succeed then letting
$M^*:=\bigcup \{M_{\beta,\alpha^*}:\beta<\lambda^+\}$, the
sequences $\langle M^* \bigcap
M_{\beta,\alpha^*+1}:\beta<\lambda^+ \rangle,\ \langle
M_{\beta,\alpha^*}:\beta<\lambda^+ \rangle$ are representations of
$M^*$. So for a club of $\beta<\lambda^+$ $M^* \bigcap
M_{\beta,\alpha^*+1}=M_{\beta,\alpha^*}$. Take such a $\beta$.
$$M_{\beta,\alpha^*} \subseteq M_{\beta+1,\alpha^*} \bigcap
M_{\beta,\alpha^*+1} \subseteq M^* \bigcap
M_{\beta,\alpha^*+1}=M_{\beta,\alpha^*}$$, hence this is an
equivalences chain, in contradiction to condition (v).

Where will we get stuck? Obviously we will not get stuck at
$\beta=0$. For $\beta$ limit we define $M_{\beta,\alpha}=\bigcup
\{M_{\gamma,\alpha}:\gamma<\beta\}$ and by smoothness (Definition
\ref{1.1}.d) $\langle M_{\beta,\alpha}:\alpha \leq \alpha^*+1
\rangle$ is increasing. It remains to get stuck at some successor
ordinal. Let $\beta$ be the first ordinal such that there is no
$\langle M_{\beta+1,\alpha}:\alpha \leq \alpha^*+1 \rangle$ which
satisfies clauses (i)-(vii).

\case{Case a:} $M_{\beta,\alpha^*+1}=M_{\beta,\alpha^*}$. We
define $N_\alpha:=M_{\beta,\alpha}$ for $\alpha \leq \alpha^*+1$.
$M^+=M_{0,\alpha^*+1} \preceq
M_{\beta,\alpha^*+1}=M_{\beta,\alpha^*}=N_{\alpha^*}$. So the
proposition is proved.

\case{Case b:} $M_{\beta,\alpha^*} \prec M_{\beta,\alpha^*+1}$. We
prove that we will not get stuck. By the densite of basic types
(in Definition \ref{definition of a good frame minus stability})
there is $a \in M_{\beta,\alpha^*+1}$ such that
$(M_{\beta,\alpha^*},M_{\beta,\alpha^*+1},a) \in
S^{bs}(M_{\beta,\alpha^*})$. Denote
$p_\beta:=tp(a,M_{\beta,\alpha^*},M_{\beta,\alpha^*+1})$. By
Proposition \ref{adding a type to an independent sequence} there
is a sequence $\langle M_{\beta+1,\alpha}:\alpha \leq \alpha^*
\rangle$ which satisfies clauses $(i),(iii)$ such that $p$ is
realized in $M_{\beta+1,\alpha^*}$. Take $b \in
M_{\beta+1,\alpha^*}$ which realizes $p$. By the definition of a
type without loss of generality $a=b$.
\end{proof}

\begin{proposition}\label{improving independence's definition
proposition} \mbox{}\\
(a) if the set $J$ is independent in
$(M,A,N)$ then there is an independent sequence $\langle
M_\alpha,a_\alpha:\alpha<\alpha^* \rangle ^\frown \langle
M_{\alpha^*} \rangle$ over $M$ such that $A \subseteq M_0$, $J=\{a_\alpha:\alpha<\alpha^*\}$ and $N \preceq M_{\alpha^*}$.\\
(b) if the sequence $\langle a_\alpha:\alpha<\alpha^* \rangle$ is
independent in $(M,A,N)$ then there is an increasing continuous
sequence $\langle M_\alpha:\alpha \leq \alpha^* \rangle$ such that
$A \subseteq M_0$, the sequence $\langle
M_\alpha,a_\alpha:\alpha<\alpha^* \rangle ^\frown \langle
M_{\alpha^*} \rangle$ is independent over $M$ and $N \preceq M_{\alpha^*}$.\\

\end{proposition}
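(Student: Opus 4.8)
The plan is to deal with the two clauses in the obvious way: each is essentially a matter of massaging the witnessing data supplied by the definition of independence (Definition \ref{definition of independence}) so that the ambient model $N$ becomes an $\preceq$-extension of $M_{\alpha^*}$ rather than merely living in a common extension $N^+$. Start with clause (b), since clause (a) will follow from it. By Definition \ref{definition of independence}(b), from the assumption that $\langle a_\alpha:\alpha<\alpha^* \rangle$ is independent in $(M,A,N)$ we get an increasing continuous sequence $\langle M_\alpha:\alpha \leq \alpha^* \rangle$, a model $M^{++}$, with $\langle M_\alpha,a_\alpha:\alpha<\alpha^* \rangle ^\frown \langle M_{\alpha^*} \rangle$ independent over $M$, $A \subseteq M_0$, $M^{++} \preceq M_{\alpha^*}$ and $N \preceq M^{++}$. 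Hence $N \preceq M_{\alpha^*}$ already, by transitivity of $\preceq$ (Definition \ref{1.1}(b)), together with the fact that $N \preceq M^{++} \preceq M_{\alpha^*}$ gives $N \subseteq M_{\alpha^*}$ and then axiom (e) of Definition \ref{1.1} upgrades the inclusion to $\preceq$; but in fact $N \preceq M^{++}$ and $M^{++} \preceq M_{\alpha^*}$ directly give $N \preceq M_{\alpha^*}$ since $\preceq$ is a partial order. So clause (b) requires essentially no work beyond unwinding the definition.

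For clause (a), suppose the \emph{set} $J$ is independent in $(M,A,N)$. By Definition \ref{definition of independence}(c) there is an independent sequence $\langle M_\alpha,a_\alpha:\alpha<\alpha^* \rangle ^\frown \langle M_{\alpha^*} \rangle$ over $M$ with $J=\{a_\alpha:\alpha<\alpha^*\}$, $A \subseteq M_0$, and a model $N^+$ with $M_{\alpha^*} \preceq N^+$ and $N \preceq N^+$. Here, unlike in clause (b), we are not told that $N$ sits below $M_{\alpha^*}$; both only sit below $N^+$. The remedy is to enlarge the big model of the independent sequence so that it absorbs $N$. Apply Proposition \ref{increasing the big model} (``increasing the big model'') to the independent sequence $\langle M_\alpha,a_\alpha:\alpha<\alpha^* \rangle ^\frown \langle M_{\alpha^*} \rangle$ and to $M^+ := N^+$ (note $M_{\alpha^*} \preceq N^+$, and if $M_{\alpha^*} = N^+$ there is nothing to do): we obtain $\langle N_\alpha:\alpha \leq \alpha^* \rangle$ with $M_\alpha \preceq N_\alpha$, $N_0 = M_0$, the sequence $\langle N_\alpha,a_\alpha:\alpha<\alpha^* \rangle ^\frown \langle N_{\alpha^*} \rangle$ independent over $M$, and $N^+ \preceq N_{\alpha^*}$. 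Then $A \subseteq M_0 = N_0$, the element set of the new sequence is still $J$ (the $a_\alpha$ are unchanged), and $N \preceq N^+ \preceq N_{\alpha^*}$, so $N \preceq N_{\alpha^*}$ since $\preceq$ is transitive. Renaming $N_\alpha$ as $M_\alpha$ gives exactly the asserted conclusion.

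The only point requiring any care is the applicability of Proposition \ref{increasing the big model}: it is stated with a strict extension $M_{\alpha^*} \prec M^+$, so one must separate off the trivial case $M_{\alpha^*} = N^+$ (where the original sequence already works, since then $N \preceq N^+ = M_{\alpha^*}$). I expect no real obstacle here; clause (a) is a direct consequence of the ``increasing the big model'' proposition and clause (b) is immediate from the definitions and the partial-order axiom for $\preceq$.
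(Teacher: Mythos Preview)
Your treatment of clause (a) is correct and is exactly what the paper does: unwind Definition \ref{definition of independence}(c) to get a witnessing sequence together with a common overstructure $N^+$ of $N$ and $M_{\alpha^*}$, then invoke Proposition \ref{increasing the big model} to enlarge the sequence so that $N^+$ (hence $N$) sits below the new top model.

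Your treatment of clause (b), however, rests on a misreading of Definition \ref{definition of independence}(b). The definition says $M^+ \succeq M^{++} \preceq M_{\alpha^*}$; you transcribed this as ``$M^{++} \preceq M_{\alpha^*}$ and $N \preceq M^{++}$'', but $M^+ \succeq M^{++}$ means $M^{++} \preceq M^+ = N$, not $N \preceq M^{++}$. So there is no chain $N \preceq M^{++} \preceq M_{\alpha^*}$, and the conclusion $N \preceq M_{\alpha^*}$ does \emph{not} fall out of the definition for free. (Indeed, if it did, part (b) of the proposition would be a tautology.) The situation in (b) is therefore exactly parallel to (a): the definition only provides a model $M^{++}$ relating $N$ and $M_{\alpha^*}$ without placing $N$ below $M_{\alpha^*}$, and one must again apply Proposition \ref{increasing the big model} to enlarge the witnessing sequence until its top model dominates $N$. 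That is precisely why the paper's proof cites Proposition \ref{increasing the big model} for both parts, not just (a).
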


\begin{proof}
By Proposition \ref{increasing the big model}.
\end{proof}

\begin{proposition}[2-concatenation]\label{2-concatenation}
If
\begin{enumerate}
\item $M \preceq M_0 \preceq M_1 \preceq M_2$. \item $\langle
a_{0,\beta}:\beta<\beta_0 \rangle$ is independent in
$(M,M_0,M_1)$. \item $\langle a_{1,\beta}:\beta<\beta_1 \rangle$
is independent in $(M,M_1,M_2)$.
\end{enumerate}
Then $\langle a_{0,\beta}:\beta<\beta_0 \rangle ^\frown \langle
a_{1,\beta}:\beta<\beta_1 \rangle$ is independent in
$(M,M_0,M_2)$.
\end{proposition}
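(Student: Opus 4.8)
\emph{Plan.} Write $\langle c_\gamma:\gamma<\beta_0+\beta_1\rangle$ for the concatenation, so $c_\beta=a_{0,\beta}$ for $\beta<\beta_0$ and $c_{\beta_0+\delta}=a_{1,\delta}$ for $\delta<\beta_1$; since $M_1\preceq M_2$ all the $c_\gamma$ belong to $M_2$, and $a_{0,\beta}\in M_1$. The idea is to produce a single witnessing chain for $\langle c_\gamma\rangle$ over $M$, obtained by gluing a witnessing chain for $\langle a_{0,\beta}\rangle$ to a suitably transported witnessing chain for $\langle a_{1,\delta}\rangle$.

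\emph{The two chains.} By Definition \ref{definition of independence}(b) together with Proposition \ref{increasing the big model} (and the absence of $\preceq$-maximal models in $\frak{k}_\lambda$), fix increasing continuous sequences $\langle P_\beta:\beta\le\beta_0\rangle$ and $\langle Q_\delta:\delta\le\beta_1\rangle$ with $P_0=M_0$ and $Q_0=M_1$, with $\langle P_\beta,a_{0,\beta}:\beta<\beta_0\rangle^\frown\langle P_{\beta_0}\rangle$ and $\langle Q_\delta,a_{1,\delta}:\delta<\beta_1\rangle^\frown\langle Q_{\beta_1}\rangle$ independent over $M$, and with $M_1\preceq P_{\beta_0}$ and $M_2\preceq Q_{\beta_1}$. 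As these sequences lie in $\frak{k}_\lambda$ we have $\beta_0,\beta_1<\lambda^+$, hence $\beta_0+\beta_1<\lambda^+$.

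\emph{Gluing and assembling.} If $M_1=P_{\beta_0}$ then $\langle P_\beta:\beta\le\beta_0\rangle^\frown\langle Q_\delta:0<\delta\le\beta_1\rangle$ is already a witnessing chain and we are done; so assume $M_1\prec P_{\beta_0}$ and pick, by density of basic types, $b\in P_{\beta_0}-M_1$ with $tp(b,M_1,P_{\beta_0})\in S^{bs}(M_1)$. Apply Fact \ref{3.3} to the chain $\langle Q_\delta:\delta\le\beta_1\rangle$ (with the elements $a_{1,\delta}$, which are basic over $Q_\delta$ because their types do not fork over $M$), the model $P_{\beta_0}\succ Q_0$ and the element $b$; this produces an isomorphism $f:P_{\beta_0}\to N_0$ over $M_1$ and an increasing continuous $\langle N_\delta:\delta\le\beta_1\rangle$ with $Q_\delta\preceq N_\delta$, $M_2\preceq Q_{\beta_1}\preceq N_{\beta_1}$, and $tp(a_{1,\delta},N_\delta,N_{\delta+1})$ not forking over $M_1$. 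For each $\delta<\beta_1$, monotonicity (Definition \ref{definition of a good frame minus stability}.b) applied to $tp(a_{1,\delta},Q_\delta,Q_{\delta+1})$ gives that $tp(a_{1,\delta},M_1,Q_{\delta+1})=tp(a_{1,\delta},M_1,N_{\delta+1})$ does not fork over $M$, so by Proposition \ref{transitivity} the type $tp(a_{1,\delta},N_\delta,N_{\delta+1})$ does not fork over $M$. Now set $R_\gamma:=f[P_\gamma]$ for $\gamma\le\beta_0$ and $R_{\beta_0+\delta}:=N_\delta$ for $\delta\le\beta_1$; this is well defined (at $\gamma=\beta_0$, $f[P_{\beta_0}]=N_0$) and is an increasing continuous sequence in $\frak{k}_\lambda$ of length $\beta_0+\beta_1$ with $R_0=f[M_0]=M_0$ (as $f$ is the identity on $M_1\supseteq M_0$) and $R_{\beta_0+\beta_1}=N_{\beta_1}\succeq M_2$. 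Since $f$ fixes $M_1$ it fixes each $a_{0,\beta}$, whence $tp(c_\beta,R_\beta,R_{\beta+1})=f(tp(a_{0,\beta},P_\beta,P_{\beta+1}))$ does not fork over $M$ (as $\frak{s}$ respects isomorphisms and $f$ is over $M$); and $tp(c_{\beta_0+\delta},R_{\beta_0+\delta},R_{\beta_0+\delta+1})=tp(a_{1,\delta},N_\delta,N_{\delta+1})$ does not fork over $M$ by the above. Therefore $\langle R_\gamma:\gamma\le\beta_0+\beta_1\rangle$ witnesses that $\langle c_\gamma:\gamma<\beta_0+\beta_1\rangle$ is independent in $(M,M_0,M_2)$.

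\emph{Main obstacle.} The substantive step is the gluing: the hypotheses only supply two chains whose junction models $P_{\beta_0}$ and $Q_0=M_1$ are unrelated apart from $M_1\preceq P_{\beta_0}$. Fact \ref{3.3} is precisely what realigns them over $M_1$, and the crucial points are that it moves the second chain while keeping its elements $a_{1,\delta}$ fixed, and that the resulting isomorphism, being the identity on $M_1$, keeps the first chain's elements $a_{0,\beta}$ fixed as well. The accompanying subtlety is that Fact \ref{3.3} only yields non-forking over $M_1$, so the transitivity proposition is needed to descend to non-forking over $M$.
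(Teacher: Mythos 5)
Your proof is correct and follows essentially the same route as the paper's: extend both witnessing chains (via Proposition \ref{increasing the big model}) so that the first ends above $M_1$ and the second above $M_2$, then apply Fact \ref{3.3} to amalgamate the top of the first chain with the second chain over $M_1$, observing that the resulting map fixes each $a_{0,\beta}\in M_1$ and that transitivity upgrades the non-forking from over $M_1$ to over $M$. Your explicit handling of the degenerate case $M_1=P_{\beta_0}$ (and the choice of $b$ by density of basic types to meet the hypothesis of Fact \ref{3.3}) only fills in details the paper leaves implicit.
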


\begin{proof}
\begin{displaymath}
\xymatrix{&&&&M_{0,\beta_0+\beta_1}\\
&&& M_{0,\beta_0+1} \ar[ru]^{id}\\
&& M_{0,\beta_0} \ar[ru]^{f} && M_{1,\beta_1} \ar[uu]^{id}\\
& M_{0,\beta} \ar[ru]^{id} && M_{1,1} \ar[uu]^{id} \ar[ru]^{id}\\
M_0 \ar[ru]^{id} \ar[rr]^{id} && M_1 \ar[ru]^{id} \ar[uu]^{id}
\ar[rr]^{id} && M_2 \ar[uu]^{id} }
\end{displaymath}
By clause 2 and Proposition \ref{improving independence's
definition proposition}, there is a sequence $\langle
M_{0,\beta}:\beta \leq \beta_0 \rangle$ such that the sequence
$\langle M_{0,\beta},a_{0,\beta}:\beta<\beta_0 \rangle ^\frown
\langle M_{0,\beta_0} \rangle$ is independent over $M$ and $M_1
\preceq M_{0,\beta_0}$. Similarly by clause 3 and Proposition
\ref{improving independence's definition proposition} there is a
sequence $\langle M_{1,\beta}:\beta \leq \beta_1 \rangle$ such
that the sequence $\langle M_{1,\beta},a_{1,\beta}:\beta<\beta_1
\rangle ^\frown \langle M_{1,\beta_1} \rangle$ is independent over
$M$ and $M_2 \preceq M_{1,\beta_1}$. By Fact \ref{3.3}, for some
$f,\langle M_{0,\beta_0+\beta}:\beta \leq \beta_1 \rangle $ the
following hold:
\begin{enumerate}[(a)]
\item $f:M_{0,\beta_0} \to M_{0,\beta_0+1}$ is an embedding over
$M_1$. \item $\langle M_{0,\beta_0+\beta}:\beta \leq \beta_1
\rangle$ is an increasing continuous sequence. \item For $\beta
\leq \beta_1$, $M_{1,\beta} \preceq M_{0,\beta_0+\beta}$. \item
For $\beta<\beta_1$,
$tp(a_{1,\beta},M_{0,\beta_0+\beta},M_{0,\beta_0+\beta+1})$ does
not fork over $M_{1,\beta}$.
\end{enumerate}
Since for $\beta<\beta_0$, $a_{0,\beta} \in M_1$ it follows that
$f(a_{0,\beta})=a_{0,\beta}$. Therefore the sequence $\langle
f[M_{0,\beta}],a_{0,\beta}:\beta<\beta_0 \rangle ^\frown \langle
M_{0,\beta_0+\beta},a_{1,\beta}:\beta<\beta_1 \rangle ^\frown
\langle M_{0,\beta_0+\beta_1} \rangle$ is independent over $M$.
But $M_2 \preceq M_{1,\beta_1} \preceq M_{0,\beta_0+\beta_1}$.
\end{proof}

\begin{comment}
Better to write this, not to delete, as I do know!!!
\begin{proposition}[concatenation]\label{concatenation}
If $\langle M_\alpha:\alpha<\alpha^* \rangle$ is an increasing
continuous sequence and for $\alpha<\alpha^*$ $J_\alpha$ is
independent in $(M,M_\alpha,M_{\alpha+1})$, then $\bigcup
\{J_\alpha:\alpha<\alpha^*\}$ is independent in $(M,M_0,\bigcup
\{M_\alpha:\alpha<\alpha^*\})$.
\end{proposition}
\end{comment}

\begin{theorem}\label{change order of a finite set}\label{middle to first}
\mbox{}\\
(a) If \emph{a finite set} $J$ is independent in $(M,M_0,M^+)$ and
$\{a_\alpha:\alpha<\alpha^*\}$ is an enumeration of $J$ without
repetitions then the \emph{sequence} $\langle
a_\alpha:\alpha<\alpha^* \rangle$ is independent in
$(M,M_0,M^+)$.\\
(b) If the sequence $\langle M_{\alpha},a_\alpha:\alpha \leq \beta
\rangle^\frown \langle M_{\beta+1} \rangle$ is independent over
$M$, Then the sequence $\langle a_\beta \rangle ^\frown \langle
a_\alpha:\alpha<\beta \rangle$ is independent in
$(M,M_0,M_{\beta+1})$.\\
(c) If the sequence $\langle a_\alpha:\alpha<\beta+1 \rangle$ is
independent in $(M,M_0,N)$, then the sequence $\langle a_\beta
\rangle ^\frown \langle a_\alpha:\alpha<\beta \rangle$ is
independent in $(M,M_0,N)$.\\
(d) If the sequence $\langle M_{\alpha},a_\alpha:\alpha<\alpha^*
\rangle^\frown \langle M_{\alpha^*} \rangle$ is independent over
$M$ and $\beta_0<\beta_1<\alpha^*$ then the sequence $\langle
a_{\beta_1} \rangle ^\frown \langle a_\alpha:\alpha \in
[\beta_0,\beta_1) \rangle ^\frown \langle a_\alpha:\alpha \in
(\beta_1,\alpha^*) \rangle$ is independent in
$(M,M_{\beta_0},M_{\beta_1+1})$. So the sequence $\langle
a_\alpha:\alpha<\beta_0 \rangle ^\frown \langle a_{\beta_1}
\rangle ^\frown \langle a_\alpha:\alpha \in [\beta_0,\beta_1)
\rangle ^\frown \langle a_\alpha:\alpha \in (\beta_1,\alpha^*)
\rangle$ is independent in $(M,M_0,M_{\alpha^*})$.
\end{theorem}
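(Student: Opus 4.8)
The plan is to prove the four parts in sequence, using each to bootstrap the next, since part (a) is stated for \emph{sets} but reduces to the sequence case, and parts (b), (c), (d) are really the technical heart. I would begin with part (b): the essential move is the \emph{exchange} step — given an independent sequence $\langle M_\alpha, a_\alpha : \alpha \le \beta\rangle ^\frown \langle M_{\beta+1}\rangle$ over $M$, I want to pull $a_\beta$ to the front. The key observation is that $\ftp(a_\beta, M_\beta, M_{\beta+1})$ does not fork over $M$, so by the Symmetry axiom (Definition of a good frame minus stability, clause e) applied with a suitable chain, I can realize the whole configuration in a new amalgam where $a_\beta$ sits over $M_0$ first. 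More precisely, I would build, by induction on $\alpha \le \beta$, an increasing continuous sequence $\langle N_\alpha : \alpha \le \beta\rangle$ together with a model $N^+ \succeq N_\beta$ containing $a_\beta$, such that $\ftp(a_\beta, M, N_\alpha \text{ or } N^+)$ is basic and non-forking over $M$, and $\ftp(a_\alpha, N_\alpha, N_{\alpha+1})$ does not fork over $M$ for $\alpha < \beta$; then $\langle a_\beta\rangle ^\frown \langle a_\alpha : \alpha < \beta\rangle$ witnesses independence in $(M, M_0, M_{\beta+1})$ (after renaming). Here Fact~\ref{3.3} (amalgamation of a model and a sequence) is exactly the right tool: it lets me amalgamate the ``new'' element $a_\beta$ (playing the role of $b$) against the given chain $\langle M_\alpha\rangle$ while keeping all the non-forking statements over $M$.

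For part (c), the difference from (b) is cosmetic: ``$\langle a_\alpha : \alpha < \beta+1\rangle$ independent in $(M, M_0, N)$'' unfolds, via Definition~\ref{definition of independence}(b) and Proposition~\ref{improving independence's definition proposition}(b), to the existence of a witnessing chain $\langle M_\alpha : \alpha \le \beta+1\rangle$ with $N \preceq M_{\beta+1}$, so part (b) applies directly to give independence of $\langle a_\beta\rangle^\frown\langle a_\alpha : \alpha<\beta\rangle$ in $(M, M_0, M_{\beta+1})$, and then monotonicity of independence in the last coordinate (since $N \preceq M_{\beta+1}$ and we can also go the other way using a common extension) yields independence in $(M, M_0, N)$. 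For part (a), I would argue by induction on $|J|$: any finite independent set, when we remove $a_\alpha$ for the last index and use that removing elements preserves independence (this needs the ``subset'' direction — which for finite sets should follow from the earlier propositions, or I would prove it inline by noting a sub-sequence of an independent sequence is independent), gives a smaller independent set; then I use part (c) repeatedly to move elements around until the desired enumeration is achieved — since the symmetric group on a finite set is generated by the transposition moving the last element to the front composed with shifts, and each such move is an instance of (b)/(c).

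Part (d) is the genuine payoff and the main obstacle. Given the independent sequence $\langle M_\alpha, a_\alpha : \alpha < \alpha^*\rangle^\frown\langle M_{\alpha^*}\rangle$ over $M$ and indices $\beta_0 < \beta_1$, I want to pull $a_{\beta_1}$ past the block $\langle a_\alpha : \alpha \in [\beta_0, \beta_1)\rangle$. The strategy is: first restrict attention to the segment from $\beta_0$ onward; the tail $\langle M_\alpha, a_\alpha : \alpha \in [\beta_0, \alpha^*)\rangle^\frown\langle M_{\alpha^*}\rangle$ is independent over $M$ (and over $M_{\beta_0}$ by monotonicity/transitivity, Proposition~\ref{transitivity}). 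Now I use the 2-concatenation proposition (Proposition~\ref{2-concatenation}) in reverse, or rather its proof idea: split the tail at $\beta_1$, so $\langle a_\alpha : \alpha \in [\beta_0, \beta_1)\rangle$ is independent in $(M_{\beta_0}, M_{\beta_0}, M_{\beta_1})$ and $\langle a_\alpha : \alpha \in [\beta_1, \alpha^*)\rangle$ is independent in $(M_{\beta_0}, M_{\beta_1}, M_{\alpha^*})$. The subtle point is that I need to move $a_{\beta_1}$ — the \emph{first} element of the second block — all the way to the front of the combined block. I would do this by applying part (b) to the second block alone (moving $a_{\beta_1}$ to the front of $\langle a_\alpha : \alpha \in [\beta_1, \alpha^*)\rangle$ over the base $M_{\beta_1}$, noting $\ftp(a_{\beta_1}, M_{\beta_1}, M_{\beta_1+1})$ is basic non-forking over $M$ hence over $M_{\beta_1}$... wait, actually we need it non-forking over $M$), getting it independent in $(M_{\beta_0}, M_{\beta_1}, M_{\beta_1+1})$ in the reordered form $\langle a_{\beta_1}\rangle^\frown\langle a_\alpha : \alpha \in (\beta_1, \alpha^*)\rangle$; but since $\ftp(a_{\beta_1}, M_{\beta_1}, \cdot)$ does not fork over $M$, and in particular not over $M_{\beta_0}$, I can further amalgamate (via Fact~\ref{3.3}, using $a_{\beta_1}$ as the ``$b$'' and the chain $\langle M_\alpha : \alpha \in [\beta_0, \beta_1)\rangle$ as the increasing sequence) to slide $a_{\beta_1}$ in front of the \emph{first} block $\langle a_\alpha : \alpha \in [\beta_0, \beta_1)\rangle$ as well, producing independence of $\langle a_{\beta_1}\rangle^\frown\langle a_\alpha : \alpha\in[\beta_0,\beta_1)\rangle^\frown\langle a_\alpha:\alpha\in(\beta_1,\alpha^*)\rangle$ in $(M, M_{\beta_0}, M_{\beta_1+1})$. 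The hard part throughout is bookkeeping the base models and ensuring every non-forking statement is tracked over $M$ (not merely over intermediate models) so that the final Definition~\ref{definition of independence} applies; I expect to lean on Proposition~\ref{transitivity} and monotonicity at nearly every step, and on Fact~\ref{3.3} as the workhorse for the actual reorderings. The very last sentence of (d) — prepending $\langle a_\alpha : \alpha < \beta_0\rangle$ — then follows by one more application of Proposition~\ref{2-concatenation} (2-concatenation), concatenating the independent initial segment $\langle a_\alpha : \alpha < \beta_0\rangle$ in $(M, M_0, M_{\beta_0})$ with the reordered block just obtained, plus the untouched tail $\langle a_\alpha : \alpha \in (\beta_1, \alpha^*)\rangle$ handled by the same concatenation.
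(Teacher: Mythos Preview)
Your plan for (b) and (c) is essentially the paper's approach: apply Fact~\ref{3.3} with $N = M_{\beta+1}$ and $b = a_\beta$ to build the auxiliary chain, and then use Proposition~\ref{improving independence's definition proposition} to pass from (b) to (c). One point you gloss over: Fact~\ref{3.3} produces $f(a_\beta)$ in the new chain, not $a_\beta$ itself, and the phrase ``after renaming'' hides a real argument. The paper observes that both $\ftp(f(a_\beta), M_\beta, N_\beta)$ and $\ftp(a_\beta, M_\beta, M_{\beta+1})$ are non-forking extensions of the same type over $M_0$, so by the uniqueness axiom they coincide; one then amalgamates $N_\beta$ with $M_{\beta+1}$ over $M_\beta$ sending $f(a_\beta)$ to $a_\beta$, and this amalgam is the witness for independence in $(M, M_0, M_{\beta+1})$.

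Your treatment of (d), however, is tangled, and the paper's route is much shorter. You split the tail at $\beta_1$ and then ``apply part (b) to the second block $[\beta_1, \alpha^*)$'' --- but $a_{\beta_1}$ is already the first element of that block, so this step is vacuous. You then fall back on invoking Fact~\ref{3.3} directly to slide $a_{\beta_1}$ past $[\beta_0, \beta_1)$, which amounts to re-deriving (b) from scratch and still leaves you the bookkeeping of re-attaching the tail $(\beta_1, \alpha^*)$. The paper instead restricts to the segment $[\beta_0, \beta_1]$: by definition $\langle a_\alpha : \alpha \in [\beta_0, \beta_1]\rangle$ is independent in $(M, M_{\beta_0}, M_{\beta_1+1})$, and a \emph{single} application of (c) moves $a_{\beta_1}$ to the front of that segment. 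Then two applications of Proposition~\ref{2-concatenation} --- once to prepend $\langle a_\alpha : \alpha < \beta_0\rangle$, independent in $(M, M_0, M_{\beta_0})$, and once to append $\langle a_\alpha : \alpha \in (\beta_1, \alpha^*)\rangle$, independent in $(M, M_{\beta_1+1}, M_{\alpha^*})$ --- finish (d). Finally, (a) follows immediately from (d), since any permutation of a finite sequence is a composite of the moves in (d); your separate induction on $|J|$ using only (c) would work but effectively re-proves (d) along the way.
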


\begin{proof}
\mbox{}

(a) Follows by item d.

(b) By Fact \ref{3.3} for some $f,\langle N_\alpha:\alpha \leq
\beta \rangle$ the following hold:
\begin{enumerate}
\item $f$ is an isomorphism of $M_{\beta+1}$ to $N_1$ over $M_0$.
\item $N_0=f[M_{\beta+1}]$. \item for $\alpha \leq \beta$,
$M_\alpha \preceq N_\alpha$. \item
$tp(f(a_\beta),M_\beta,N_\beta)$ does not fork over $M_0$. \item
For $\alpha<\beta$, $tp(a_\alpha,N_\alpha,N_{\alpha+1})$ does not
fork over $M_\alpha$.
\end{enumerate}

\begin{displaymath}
\xymatrix{a_\beta \in M_{\beta+1} \ar[r]^{f} & N_1 \ar[r]^{id} & N_\beta \ar[r]^{g} & N_{\beta+1}\\
M_0 \ar[r]^{id} \ar[u]^{id} & M_1 \ar[r]^{id} \ar[u]^{id} &
M_\beta \ar[r]^{id} \ar[u]^{id} & M_{\beta+1} \ar[u]^{id} \ni
a_\beta}
\end{displaymath}

By assumption, $tp(a_\alpha,M_\alpha,M_{\alpha+1})$ does not fork
over $M$. Hence by Proposition \ref{transitivity} and clause 5
$tp(a_\alpha,N_\alpha,N_{\alpha+1})$ does not fork over $M$.
Therefore:

(6) The sequence $\langle N_\alpha,a_\alpha:\alpha<\beta \rangle
^\frown \langle N_\beta \rangle$ is independent in
$(M,N_0,N_\beta)$.

Since $f$ is an isomorphism over $M_0$, by clause 2
$tp(f(a_\beta),M_0,N_0)=tp(a_\beta,M_0,M_{\beta+1})$. By
assumption, $tp(a_\beta,M_\beta,M_{\beta+1})$ does not fork over
$M_0$. So by clause 4 and Definition \ref{definition of a good
frame minus stability}.d (uniqueness)
$tp(f(a_\beta),M_\beta,\allowbreak
N_\beta)=tp(a_\beta,M_\beta,M_{\beta+1})$. hence there is an
amalgamation $(g,id_{M_{\beta+1}},N_{\beta+1})$ of
$N_\beta,M_{\beta+1}$ over $M_\beta$ such that
$g(f(a_\beta))=a_\beta$. Since $g$ is an embedding over $M_\beta$,
it follows that $g(a_\alpha)=a_\alpha$. Therefore by clause 6 the
sequence $\langle M_0,a_\beta \rangle ^\frown \langle
g[N_\alpha],a_\alpha:\alpha<\beta \rangle ^\frown \langle
g[N_\beta] \rangle$ is independent over $M$.

(c) By item b and Proposition \ref{improving independence's
definition proposition}.

(d) By definition the sequence $\langle a_\alpha:\alpha \in
[\beta_0,\beta_1] \rangle$ is independent in
$(M,M_{\beta_0},M_{\beta_1+1})$. So by item c, the sequence
$\langle a_{\beta_1} \rangle ^\frown \langle a_\alpha:\alpha \in
[\beta_0,\beta_1) \rangle$ is independent in
$(M,M_{\beta_0},M_{\beta_1+1})$. Now use Proposition
\ref{2-concatenation} twice.
\end{proof}

The following proposition is similar to Claim 5.13 of
\cite{sh705}.
\begin{proposition}[The exchange proposition]\label{exchange}
Let $J$ be an independent set in $(M,M_0,N)$.
\begin{enumerate}
\item If $tp(a,M,N) \in S^{bs}(M)$, then there is a finite subset
$J^* \subset J$ such that $J \bigcup \{a\}-J^*$ is independent in
$(M,M_0,N)$. \item If $a \in N$ then for some models $M',N'$ and a
subset $J^* \subseteq J$ the following hold:
\begin{enumerate}
\item $|J^*|<\aleph_0$. \item $M \bigcup \{a\} \subseteq M'
\preceq N'$. \item $N \preceq N'$. \item $J-J^*$ is independent in
$(M,M',N')$.
\end{enumerate}
\end{enumerate}
\end{proposition}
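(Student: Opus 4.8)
The plan is to reduce everything to the ``adding one element to an independent set'' mechanism already encoded in Proposition~\ref{adding a type to an independent sequence} and the reordering results in Theorem~\ref{change order of a finite set}. For part~(1), start by using Proposition~\ref{improving independence's definition proposition}(a) to replace the independent \emph{set} $J$ in $(M,M_0,N)$ by an honest independent \emph{sequence} $\langle M_\alpha,a_\alpha:\alpha<\alpha^*\rangle^\frown\langle M_{\alpha^*}\rangle$ over $M$ with $A=M_0\subseteq M_0$ (abusing notation), $J=\{a_\alpha:\alpha<\alpha^*\}$, and $N\preceq M_{\alpha^*}$. Now $tp(a,M,N)\in S^{bs}(M)$; by monotonicity it lies in $S^{bs}$ over the relevant base, and since $N\preceq M_{\alpha^*}$ we may regard $p:=tp(a,M,M_{\alpha^*})$ (or an $E$-equivalent realization) as a basic type over $M_{\alpha^*}$ extending $tp(a,M,N)$ — more precisely, take the non-forking extension of $tp(a,M,N)$ to $M_{\alpha^*}$ using Definition~\ref{definition of a good frame minus stability}.f. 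Apply Proposition~\ref{adding a type to an independent sequence} to this independent sequence and the basic type $p\in S^{bs}(M_{\alpha^*})$: we obtain $\langle N_\alpha:\alpha\le\alpha^*\rangle$ with $M_\alpha\preceq N_\alpha$, the type $tp(a_\alpha,N_\alpha,N_{\alpha+1})$ not forking over $M_\alpha$ (hence, by the original independence and transitivity, not forking over $M$), the sequence $\langle N_\alpha,a_\alpha:\alpha<\alpha^*\rangle^\frown\langle N_{\alpha^*}\rangle$ independent over $M$, and $p$ realized in $N_{\alpha^*}$, say by an element which — replacing it via an isomorphism over $M$, or rather observing that $p$ does not fork over $M$ so its realization has type $tp(a,M,N)$ over $M$ — we may take to be $a$ itself. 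Appending $a$ at the end gives $\langle N_\alpha,a_\alpha:\alpha<\alpha^*\rangle^\frown\langle N_{\alpha^*},a\rangle^\frown\langle N_{\alpha^*+1}\rangle$, and the key point is that this whole sequence is independent over $M$: for the new last element, $tp(a,N_{\alpha^*},N_{\alpha^*+1})$ restricted to $M$ is $tp(a,M,N)$ — but I must check it does not fork over $M$. This is exactly where Proposition~\ref{adding a type to an independent sequence} does not immediately suffice, since it only guarantees $p$ is \emph{realized}, not that the realizing element sits in a non-forking way. The fix: take the realization $a'$ of $p$ in $N_{\alpha^*}$, build one more model $N_{\alpha^*+1}\succ N_{\alpha^*}$ with $a'\in N_{\alpha^*+1}$; since $p\in S^{bs}(N_{\alpha^*})$ does not fork over $M$ (it is the non-forking extension of $tp(a,M,N)$), we get the last-coordinate non-forking for free, so the extended sequence is genuinely independent over $M$. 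Thus $J\cup\{a\}=\{a_\alpha:\alpha<\alpha^*\}\cup\{a'\}$ is independent in $(M,M_0,N^+)$ for a suitable $N^+$; but since $\alpha^*$ may be infinite while we want the \emph{same} $(M,M_0,N)$, we invoke finiteness: by the finite-character content of independence for sets one reduces to finite subsets, and any finite subset of $J\cup\{a\}$ already lives, after reordering via Theorem~\ref{change order of a finite set}(d), inside a suitable finite sub-configuration — this is where the finite subset $J^*\subset J$ appears, absorbing the bookkeeping needed to land back in $N$ rather than in an extension $N^+$.

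For part~(2), given an arbitrary $a\in N$ (not necessarily of basic type over $M$), first produce a basic type: since $M\prec M'$ for some model $M'\in K_\lambda$ with $M\cup\{a\}\subseteq M'$ (extend $M$ to contain $a$, using amalgamation and $LST$), density of basic types (Definition~\ref{definition of a good frame minus stability}.(2)(c)) gives an element $c\in M'-M$ with $tp(c,M,M')\in S^{bs}(M)$. However $c$ need not be $a$; the honest route is: apply part~(1) not to $a$ but, after enlarging $N$ to some $N'$ in which we have room, repeatedly — or rather observe that it suffices to find \emph{some} $M',N'$ with $M\cup\{a\}\subseteq M'\preceq N'$, $N\preceq N'$, and $J-J^*$ independent in $(M,M',N')$. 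Here the base of the independence statement moves from $M_0$ to $M'$. Realize $J$ as an independent sequence over $M$ with $M_0\subseteq M_0$ and $N\preceq M_{\alpha^*}$ via Proposition~\ref{improving independence's definition proposition}; then amalgamate $M_{\alpha^*}$ with a model $M''\succ M$ containing $a$ over $M$, using Fact~\ref{3.3} (amalgamation of a model and a sequence) applied with the sequence and the element $a$ — wait, $a$ need not have a basic type over $M$, so Fact~\ref{3.3} does not directly apply. Instead: choose $c\in M''-M$ with $tp(c,M,M'')\in S^{bs}(M)$, feed $(M,M'',c)$ into Fact~\ref{3.3} against the independent sequence $\langle M_\alpha,a_\alpha\rangle$, obtaining $N_\alpha\succeq M_\alpha$, an embedding $f:M''\to N_0$ over $M$ with $tp(f(c),M_\alpha,N_\alpha)$ not forking over $M$ and $tp(a_\alpha,N_\alpha,N_{\alpha+1})$ not forking over $M$; then $f[M'']$ contains $f(a)$, set $M':=f[M'']$, $N':=N_{\alpha^*}$ (or a suitable extension), $J^*:=\emptyset$, and check $J=\{a_\alpha\}$ is independent in $(M,M',N')$. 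The subtlety is whether moving the base up to $M'$ can destroy independence; by monotonicity (Definition~\ref{definition of a good frame minus stability}.b) non-forking over $M$ is preserved under enlarging the ``middle'' model from $M_0$ to $M'$ provided $M\preceq M'$, which holds. Thus in fact $J^*$ can be taken empty in part~(2) — the finite exceptional set is only genuinely needed in part~(1), where we insist on the \emph{same} $N$.

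\textbf{Main obstacle.} The crux in both parts is the same: independence of a \emph{set} is defined via \emph{some} independent sequence and \emph{some} ambient model $N^+\succeq N$, so producing independence of the enlarged set in a controlled ambient model, and descending back to the prescribed $(M,M_0,N)$ in part~(1), forces a reduction to finite subsets — which is precisely what creates the finite exceptional set $J^*$. Concretely, the hard step is arguing that when we add the realization of the basic type $p$ at the end of the independent sequence, \emph{deleting a finite $J^*$} we can slide the new element into a position where its type over the relevant middle model does not fork over $M$; this uses Theorem~\ref{change order of a finite set} to permute finitely many $a_\alpha$'s together with $a$ and Proposition~\ref{2-concatenation} to splice the resulting finite independent block back onto the tail of $J$. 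Everything else — invoking $S^{bs}$ membership, applying uniqueness to identify the realizing element with $a$, applying transitivity to get non-forking over $M$ — is routine bookkeeping of the kind already carried out in the proofs of Propositions~\ref{transitivity}, \ref{adding a type to an independent sequence}, and \ref{2-concatenation}.
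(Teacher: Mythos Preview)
Your proposal has genuine gaps in both parts, and the paper's argument is structurally different.

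\textbf{Part (1).} After invoking Proposition~\ref{improving independence's definition proposition} you have $N\preceq M_{\alpha^*}$, so $a\in M_{\alpha^*}$ already; the non-forking extension $p\in S^{bs}(M_{\alpha^*})$ of $tp(a,M,N)$ is non-algebraic and hence is \emph{not} realized by $a$. Any realization $a'$ satisfies only $tp(a',M,-)=tp(a,M,N)$; identifying $a'$ with $a$ requires an amalgamation over $M$, which will move the elements of $J$ and destroy the very independence you built. Your fallback appeal to ``finite-character content of independence for sets'' is circular: that statement is Theorem~\ref{the main theorem}(a), proved in Section~4 under the extra hypothesis that $K^{3,uq}$ has the existence property, and Proposition~\ref{exchange} is one of the ingredients feeding into that proof (via Proposition~\ref{dimension proposition}). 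So you never explain where the finite $J^*$ actually comes from.

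\textbf{Part (2).} Your claim that $J^*$ can be taken empty is false in general: if $a$ happens to lie in $J$ (nothing forbids this, since the hypothesis is merely $a\in N$), then any $M'$ containing $a$ makes $tp(a,M',-)$ algebraic, so $\{a\}\subseteq J$ cannot belong to an independent set over the base $M'$; you must put $a$ into $J^*$. Your invocation of Fact~\ref{3.3} also misfires: that fact requires the side model to extend $M_0$, not just $M$, and the embedding it produces sends your auxiliary model $M''$ \emph{into} the new chain, so you obtain $f(a)\in M'$, not $a\in M'$. Finally, ``monotonicity'' in Definition~\ref{definition of a good frame minus stability}(3)(b) concerns a single type; it does not say that an independent sequence over base $M_0$ remains independent after replacing $M_0$ by a larger $M'$ --- you would have to rebuild the entire witnessing chain from $M'$, which is exactly the nontrivial content.

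\textbf{What the paper does.} The proof is by induction on the length $\alpha^*$ of a witnessing sequence. In the successor case $\alpha^*=\beta+1$ one simply drops $a_\beta$: apply the induction hypothesis to the shorter sequence, then enlarge $J^*$ by $\{a_\beta\}$. In the limit case one uses Proposition~\ref{preparation exchange} to arrange $N\preceq M_{\alpha^*}$; since $M_{\alpha^*}=\bigcup_{\alpha<\alpha^*}M_\alpha$, the element $a$ lies in some $M_\alpha$ with $\alpha<\alpha^*$. The induction hypothesis handles the initial segment $\{a_\gamma:\gamma<\alpha\}$ in $(M,M_0,M_\alpha)$, and Proposition~\ref{2-concatenation} glues the tail $\{a_\gamma:\alpha\le\gamma<\alpha^*\}$ back on. The finite $J^*$ is exactly the set of elements dropped at the (finitely many) successor steps encountered before the recursion bottoms out --- that is the mechanism you are missing.
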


\begin{proof}
By assumption, for some independent sequence $\langle
M_\alpha,a_\alpha:\alpha<\alpha^* \rangle ^\frown \allowbreak
\langle M_{\alpha^*} \rangle$ over $M$
$\{a_\alpha:\alpha<\alpha^*\}=J$ and there is $N^+$ such that $N
\preceq N^+$ and $M_{\alpha^*} \preceq N^+$. We prove the
proposition by induction on $\alpha^*$.

(1) Assume that $\alpha^*=0$. So $J=\emptyset$ and $\{a\}$ is
independent in $(M,M_0,N)$. For $\alpha^*=\beta+1$ we can subtract
$a_\beta$. Assume $\alpha^*$ is limit. By Proposition
\ref{preparation exchange} without loss of generality
$N^+=M_{\alpha^*}$ so $a \in M_\alpha$ for some $\alpha<\alpha^*$.
By the induction hypotheses, there is a finite subset $J^*
\subseteq \{a_\gamma:\gamma<\alpha \}$, such that
$\{a_\gamma:\gamma<\alpha \} \bigcup \{a\}-J^*$ is independent in
$(M,M_0,M_\alpha)$. But $\{a_\gamma:\alpha \leq \gamma<\alpha^*
\}$ is independent in $(M,M_\alpha,M_{\alpha^*})$. Therefore by
Proposition \ref{2-concatenation} $\{a_\alpha:\alpha<\alpha^*\}
\bigcup \{a\}-J^*$ is independent in $(M,M_0,M_{\alpha^*})$.

(2) For $\alpha^*=0$ we define $M':=N,\ N':=N,\ J^*:=\emptyset$
($J-J^*=\emptyset$, so clause d is not relevant). For
$\alpha^*=\beta+1$ we can subtract $a_\beta$: By the induction
hypotheses for some models $M',N'$ and a subset $J^* \subseteq
\{a_\alpha:\alpha<\beta\}$ the following hold:
\begin{enumerate}[(a)]
\item $|J^*|<\aleph_0$. \item $M \bigcup \{a\} \subseteq M'
\preceq N'$. \item $N \preceq N'$. \item
$\{a_\alpha:\alpha<\beta\}-J^*$ is independent in $(M,M',N')$.
\end{enumerate}
Now $M',N',J^* \bigcup \{a_\beta\}$ are as required.

Assume that $\alpha^*$ is a limit ordinal. By Proposition
\ref{preparation exchange} without loss of generality
$N=M_{\alpha^*}$ so $a \in M_\alpha$ for some $\alpha<\alpha^*$.
So by the induction hypotheses, there are $M',N',J^*$ such that
the following hold:
\begin{enumerate}[(a)]
\item $|J^*|<\aleph_0$. \item $M \bigcup \{a\} \subseteq M'
\preceq N'$. \item $M_\alpha \preceq N'$. \item
$\{a_\beta:\beta<\alpha\}-J^*$ is independent in $(M,M',N')$.
\end{enumerate}

By Proposition \ref{amalgamation of a model and an independent
sequence} we can find an amalgamation $(id_{M_{\alpha^*}},f,N^+)$
of $N',M_{\alpha^*}$ over $M_\alpha$ such that the set
$\{a_\beta:\alpha \leq \beta<\alpha^* \}$ is independent in
$(M,f[N'],\allowbreak N^+)$.

\begin{displaymath}
\xymatrix{ a \in M' \ar[r]^{id} & N'  \ar[r]^{f} & N^+ \\
M_0 \ar[r]^{id} \ar[u]^{id} & M_\alpha \ar[r]^{id} \ar[u]^{id} &
M_{\alpha^*}=N \ar[u]^{id}}
\end{displaymath}

Since $f$ is an isomorphism over $M_\alpha$,
$\{a_\beta:\beta<\alpha\}-J^*$ is independent in
$(M,f[M'],f[N'])$. Now by Proposition \ref{2-concatenation}
$J-J^*$ is independent in $(M,f[M'],N^+)$. Since $a \in M_\alpha$,
it follows that $a=f(a) \in f[M']$. Therefore $f[M'],N^+,J^*$ are
as required.
\end{proof}

\begin{definition}
We say that \emph{the independence relation has continuity} when
the following holds: If
\begin{enumerate}
\item $\delta$ is a limit ordinal. \item $\langle M_\alpha:\alpha
\leq \delta \rangle$ is an increasing continuous sequence. \item
For $\alpha<\delta$ the set $J$ is independent in
$(M,M_\alpha,M^+)$.
\end{enumerate}
Then $J$ is independent in $(M,M_\delta,M^+)$.
\end{definition}

\begin{definition}
We say that \emph{the finite independence relation has continuity}
when the following holds: If
\begin{enumerate}
\item $\delta$ is a limit ordinal. \item $\langle M_\alpha:\alpha
\leq \delta \rangle$ is an increasing continuous sequence. \item
For $\alpha<\delta$ the set $J$ is finitely independent in
$(M,M_\alpha,M^+)$.
\end{enumerate}
Then $J$ is finitely independent in $(M,M_\delta,M^+)$.
\end{definition}

\begin{proposition}\label{dimension proposition}
Assume: \begin{enumerate} \item $P \subseteq S^{bs}(M_0)$. \item
$J_1,J_2$ are maximal sets in \{$J:J$ is independent in
$(M,M_0,N)$ and $a \in J \Rightarrow tp(a,M_0,N) \in P$\} \item
The independence relation has continuity or $J_1$ is finite or
$J_2$ is finite.
\end{enumerate}
Then $|J_1|=|J_2|$ or they both finite.
\end{proposition}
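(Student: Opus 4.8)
The plan is to run a Steinitz‑type exchange argument (``dimension is well defined'') on top of the exchange proposition, Proposition \ref{exchange}. Suppose toward a contradiction that the conclusion fails, i.e.\ $|J_1|\neq|J_2|$ and $J_1,J_2$ are not both finite; by the symmetry of the hypotheses in $J_1,J_2$ we may assume $|J_1|<|J_2|$ as cardinals, so $J_2$ is infinite, and therefore (by hypothesis (3)) either $J_1$ is finite or the independence relation has continuity. Write $\mu=|J_1|$ and enumerate $J_1=\langle c_i:i<\mu\rangle$.

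The heart of the argument is an ``absorb $J_1$ into the base, one element at a time'' construction. I would build, by induction on $i\le\mu$, an increasing continuous chain of base models $M_0=Q_0\preceq Q_1\preceq\cdots$, an increasing continuous chain of big models $R_i$ with $N\preceq R_i$, and an increasing chain of \emph{finite} ``discard'' sets $D_i\subseteq J_2$, so that $c_j\in Q_i$ for all $j<i$ and $J_2\setminus D_i$ is independent in $(M,Q_i,R_i)$. At a successor $i=j+1$ I would apply Proposition \ref{exchange}(2) to the member $J_2\setminus D_j$ and the element $c_j\in R_j$ to absorb $c_j$ into the base while discarding a finite $J^*\subseteq J_2\setminus D_j$, and then use Propositions \ref{preparation exchange} and \ref{amalgamation of a model and an independent sequence} (together with ``a subset of an independent set is independent'') to arrange that the new base extends $Q_j$ and the new big model extends $R_j$; put $D_{j+1}=D_j\cup J^*$. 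At a limit $i$ take unions: here one invokes ``subset of an independent set is independent'' to see that $J_2\setminus D_i$ is independent in $(M,Q_j,R_i)$ for every $j<i$, and then the \emph{continuity of the independence relation} (hypothesis (3)) to conclude $J_2\setminus D_i$ is independent in $(M,Q_i,R_i)$. When $J_1$ is finite the induction has no limit stages, so continuity is not used there.

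After $\mu$ steps we have $J_1\subseteq Q_\mu$, $J_2\setminus D_\mu$ independent in $(M,Q_\mu,R_\mu)$, and $|D_\mu|\le\mu\cdot\aleph_0=|J_1|$, where $D_\mu$ is a finite union of finite sets if $J_1$ is finite. The crucial point is that $J_2\setminus D_\mu=\emptyset$: a leftover $b\in J_2\setminus D_\mu$ has $tp(b,M_0,N)\in P$, and this type does not fork over $M$ because $b$ lies in the member $J_2$; so, taking an increasing continuous chain over $M$ that witnesses the independence of $J_1$ with top model $P^{*}\succeq N$ (Proposition \ref{improving independence's definition proposition}) and letting $q\in S^{bs}(P^{*})$ be the non‑forking extension of $tp(b,M_0,N)$ — which does not fork over $M$ by Proposition \ref{transitivity} — a realization of $q$ appended at the top of that chain gives an independent sequence over $M$, hence a member of the displayed family strictly containing $J_1$, contradicting the maximality of $J_1$. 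Therefore $J_2=D_\mu$, so $|J_2|\le|J_1|$ when $J_1$ is infinite and $J_2$ is finite when $J_1$ is finite; in both cases this contradicts the standing assumption ($|J_1|<|J_2|$ in the first case, $J_2$ infinite in the second). This proves the proposition.

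I expect the main obstacle to be the ``leftover is empty'' step, specifically making rigorous that a leftover element can be attached to a chain witnessing the independence of $J_1$: one must reconcile the base $Q_\mu$ produced by the absorptions with a genuine witnessing chain for $J_1$, which calls for careful use of existence and uniqueness of non‑forking extensions, Proposition \ref{transitivity}, and the reordering and concatenation results (Theorem \ref{change order of a finite set}, Proposition \ref{2-concatenation}). A secondary technical nuisance is keeping the base models increasing throughout the absorb construction, i.e.\ applying Proposition \ref{exchange}(2) so that its output base contains the current $Q_j$, which one massages using Propositions \ref{preparation exchange}, \ref{amalgamation of a model and an independent sequence} and Fact \ref{3.3}.
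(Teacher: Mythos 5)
Your main construction is the paper's: absorb the elements of $J_1$ into the base model one at a time via Proposition \ref{exchange}(2), discarding a finite subset $J^*$ of $J_2$ at each successor step, taking unions at limits and invoking continuity there (which is vacuous when $J_1$ is finite). The gap is in your endgame. First, the leftover $J_2\setminus D_\mu$ is provably \emph{not} empty: $D_\mu$ is a union of $\mu=|J_1|$ finite sets, so $|D_\mu|\le \mu+\aleph_0<|J_2|$ under the reductio hypothesis, whence $|J_2\setminus D_\mu|=|J_2|$. Second, the argument you offer for emptiness does not contradict maximality: appending to a witnessing chain for $J_1$ a realization $c$ of the nonforking extension $q$ of $tp(b,M_0,N)$ produces a set $J_1\cup\{c\}$ whose new element lives in some proper extension of $N$, not in $N$, so $tp(c,M_0,N)$ is not even defined and $J_1\cup\{c\}$ need not belong to the family of hypothesis (2). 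Worse, your argument never uses $b$ or its leftover status --- only that some type in $P$ does not fork over $M$ --- so if it were valid it would show that no nonempty $J_1$ is ever maximal, which is absurd.

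To contradict maximality you must adjoin to $J_1$ the \emph{actual} elements $b\in J_2\subseteq N$, and that is exactly what the absorb construction has set up: since $J_1\subseteq M'_\mu$ (so $J_1$ is independent in $(M,M_0,M'_\mu)$) and $J_2\setminus D_\mu$ is independent in $(M,M'_\mu,N'_\mu)$, Proposition \ref{2-concatenation} yields that $J_1\cup(J_2\setminus D_\mu)$ is independent in $(M,M_0,N'_\mu)$, hence in $(M,M_0,N)$; all its elements lie in $J_1\cup J_2\subseteq N$ and realize types of $P$ over $M_0$, and since $|J_2\setminus D_\mu|=|J_2|>|J_1|$ this set properly contains $J_1$. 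That contradicts the maximality of $J_1$. Replacing your ``leftover is empty'' step by this concatenation step turns your proposal into the paper's proof; the rest of what you wrote (the successor and limit bookkeeping) is sound modulo the technical massaging you already flagged.
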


\begin{proof}
Towards a contradiction assume that $|J_1|<|J_2| \geq \aleph_0$.
Let $\{a_\alpha:\alpha<|J_1|\}$ be an enumeration of $J_1$ without
repetitions. We define $\mu:=|J_1|$. We choose by
 induction on $\alpha \leq \mu$ a triple $(M'_\alpha,N'_\alpha,J^*_\alpha)$ such that:
 \begin{enumerate} \item
$\langle M'_\alpha:\alpha \leq |J_1| \rangle$ is an increasing
continuous sequence of models. \item $\langle N'_\alpha:\alpha
\leq |J_1| \rangle$ is an increasing continuous sequence of
models. \item For $\alpha \leq |J_1|$, $M'_\alpha \preceq
N'_\alpha$. \item $M'_0=M_0$. \item $N'_0=N$. \item $a_\alpha \in
M'_{\alpha+1}$. \item $J^*_\alpha$ is a finite subset of
$J_2-\bigcup \{J^*_\beta:\beta<\alpha\}$. \item
$J_2-\bigcup\{J^*_\beta:\beta \leq \alpha\}$ is independent in
$(M,M'_\alpha,N'_\alpha)$.
 \end{enumerate}
Why can we carry out this induction? For $\alpha=0$ we choose
$M'_0:=M_0,N'_0=N,J^*_0:=\emptyset$.

In the $\alpha+1$ step, we use Proposition \ref{exchange}.2 (the
exchange proposition). How? We substitute $J_2-\{J^*_\beta:\beta
\leq \alpha\},M,M'_\alpha,N'_\alpha,a_\alpha$ instead of
$J,M,M_0,N,a$. By clause 8, $J_2-\{J^*_\beta:\beta \leq \alpha\}$
is independent in $(M,M'_\alpha,N'_\alpha)$. $a_\alpha \in J_1$,
so by assumption 2, $a_\alpha \in N$. By clauses 2,5 $N \subseteq
N'_\alpha$, so $a_\alpha \in N'_\alpha$. Therefore by the exchange
proposition, for some $M',N'$ and a subset $J^* \subseteq
J_2-\bigcup\{J^*_\beta:\beta \leq \alpha\}$ the following hold:
\begin{enumerate}
\item $|J^*|<\aleph_0$. \item $M'_\alpha \bigcup \{a_\alpha\}
\subseteq M' \preceq N'$. \item $N'_\alpha \preceq N'$. \item
$J_2-\bigcup\{J^*_\beta:\beta \leq \alpha\}-J^*$ is independent in
$(M,M',N')$.
\end{enumerate}
Now we define $M'_{\alpha+1}:=M',\ N'_{\alpha+1}:=N',\
J^*_{\alpha+1}:=J^*$.

For limit $\alpha$ we define $M'_\alpha:=\bigcup
\{M'_\beta:\beta<\alpha\},\ N'_\alpha:=\bigcup
\{N'_\beta:\beta<\alpha\}$ and $J^*_\alpha:=\emptyset$. For every
$\beta<\alpha$ by clause 8 the set $J_2-\bigcup
\{J^*_\gamma:\gamma \leq \beta\}$ is independent in
$(M,M'_\beta,N'_\beta)$. So $J_2-\bigcup
\{J^*_\beta:\beta<\alpha\}$ is independent in
$(M,M'_\beta,N'_\beta)$. Hence by the continuity $J_2-\bigcup
\{J^*_\beta:\beta<\alpha\}$ is independent in
$(M,M'_\alpha,N'_\alpha)$.

By assumption, $J_1$ is independent in $(M,M_0,N)$. By clauses 6,1
$J_1 \subseteq M'_\mu$. But $M'_\mu \preceq N'_\mu \succeq N_0$.
Hence $J_1$ is independent in $(M,M_0,M'_\mu)$. By clause 8
$J_2-\bigcup\{J^*_\beta:\beta \leq \mu\}$ is independent in
$(M,M'_\mu,N'_\mu)$. Therefore by Proposition
\ref{2-concatenation} $J_1 \bigcup (J_2-\bigcup\{J^*_\beta:\beta
\leq \mu\})$ is independent in $(M,M_0,N_\mu')$, hence in
$(M,M_0,N)$. But it contradicts the maximality of $J_1$! ($J_1
\bigcup J_2-\bigcup\{J^*_\beta:\beta \leq \mu\} \neq J_1$, because
$|J_1|<|J_2|>\bigcup\{J^*_\beta:\beta \leq \mu\}$).
\end{proof}

%By Proposition \ref{dimension proposition} we can define:
\begin{definition}\label{dimension}
Suppose $M \preceq_{\frak{k}_\lambda} N$ and let $P \subseteq
S^{bs}(M)$.
\begin{comment}
$$dim(P,N):=Min
%\left
\{|J|: \begin{matrix} a \in J \Rightarrow tp(a,M,N)=p\\ J \text{is
independent in} (M,N)
\\ J \text{is independent in}
(M,N)
\end{matrix}\}$$
\end{comment}
$$dim(P,N):=Min\left\{\begin{array}{c|l}{|J|}& a \in J \Rightarrow tp(a,M,N) \in P \\
&J \text{ is independent in } (M,N)
\\ &J \text{ is maximal under the previous conditions} \\
 \end{array}\right\}.$$
For $p \in S^{bs}(M)$ we define $dim(p,N)=dim(\{p\},N)$.
\end{definition}

\section{Finite Character}
In this section we use uniqueness triples. In Definition
\ref{definition uniqueness triples}=4.3 we define uniqueness
triples. In Definition \ref{4.4}=4.4 we define an independent
sequence by $K^{3,uq}$, the class of uniqueness triples.
Proposition \ref{4.9}=4.9.c asserts that the existence property
for $K^{3,uq}$ implies that any independent sequence is
independent by $K^{3,uq}$. Proposition \ref{4.10}=4.10 explain the
advantage of uniqueness triples. Proposition
\ref{complete...}=4.11 is similar to Proposition
\ref{complete...}=4.10, but we replace here independent sequence
by independent set. Theorem \ref{complete...}=4.12 is the main
theorem of the paper. It asserts that independence satisfies the
expected properties.

\begin{definition}\label{equivalent amalgamations}
Suppose
\begin{enumerate} \item
$M_0 \preceq_\frak{s}M_1 \wedge M_0 \preceq_\frak{s}M_2$. \item
For $x=a,b$, $(f_1^x,f_2^x,M_3^x)$ is an amalgamation of $M_1,M_2$
over $M_0$.
\end{enumerate}
$(f_1^a,f_2^a,M_3^a),(f_1^b,f_2^b,M_3^b)$ are said to be
\emph{equivalent} over $M_0$ if there are $f^a,f^b,M_3$ such that
the following diagram commutes:
\begin{displaymath}
\xymatrix{&M_3^b \ar[r]^{f^b} & M_3\\
M_1 \ar[ru]^{f_1^b} \ar[rr]^{f_1^a} && M_3^a \ar[u]_{f^a}\\
M_0 \ar[u]^{id_{M_0}} \ar[r]_{id_{M_0}} & M_2 \ar[ru]_{f_2^a}
\ar[uu]^{f_2^b}}
\end{displaymath}
We denote the relation ``to be equivalent over $M_0$'' between
amalgamations over $M_0$, by $E_{M_0}$.
\end{definition}

\begin{proposition}[\cite{jrsh875}]
The relation $E_{M_0}$ is an equivalence relation.
\end{proposition}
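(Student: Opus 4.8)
The plan is to verify the three defining properties of an equivalence relation for $E_{M_0}$, where the only real content is transitivity; reflexivity and symmetry are essentially formal.

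\textbf{Reflexivity.} Given an amalgamation $(f_1,f_2,M_3)$ of $M_1,M_2$ over $M_0$, I would witness $(f_1,f_2,M_3)\, E_{M_0}\, (f_1,f_2,M_3)$ by taking $M_3 := M_3$ (as the common target) and $f^a = f^b := \id_{M_3}$. The required diagram then commutes trivially, since $f_1^a = f_1^b$ and $f_2^a = f_2^b$ and both composites with the identity agree.

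\textbf{Symmetry.} This is immediate from the symmetry of the diagram in Definition~\ref{equivalent amalgamations}: if $f^a, f^b, M_3$ witness that $(f_1^a,f_2^a,M_3^a)\, E_{M_0}\, (f_1^b,f_2^b,M_3^b)$, then the \emph{same} triple $f^b, f^a, M_3$ (with the roles of the superscripts $a,b$ interchanged) witnesses $(f_1^b,f_2^b,M_3^b)\, E_{M_0}\, (f_1^a,f_2^a,M_3^a)$, because the commutativity conditions are symmetric in $a$ and $b$.

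\textbf{Transitivity.} Suppose $(f_1^a,f_2^a,M_3^a)\, E_{M_0}\, (f_1^b,f_2^b,M_3^b)$ via $g^a: M_3^a \to P$, $g^b: M_3^b \to P$, and $(f_1^b,f_2^b,M_3^b)\, E_{M_0}\, (f_1^c,f_2^c,M_3^c)$ via $h^b: M_3^b \to Q$, $h^c: M_3^c \to Q$. Here $g^a \circ f_1^a = g^b \circ f_1^b$, $g^a \circ f_2^a = g^b \circ f_2^b$, and similarly for $h^b, h^c$. The natural move is to amalgamate $P$ and $Q$ over the common model $M_3^b$: by amalgamation in $\frak{k}_\lambda$ applied to $g^b: M_3^b \to P$ and $h^b: M_3^b \to Q$ (both are $\preceq$-embeddings), there is a model $R$ and embeddings $k: P \to R$, $\ell: Q \to R$ with $k \circ g^b = \ell \circ h^b$. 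Then set $f^a := k \circ g^a : M_3^a \to R$ and $f^c := \ell \circ h^c : M_3^c \to R$. One checks $f^a \circ f_1^a = k\circ g^a\circ f_1^a = k \circ g^b \circ f_1^b = \ell \circ h^b \circ f_1^b = \ell \circ h^c \circ f_1^c = f^c \circ f_1^c$, and symmetrically for $f_2$; so $R, f^a, f^c$ witness $(f_1^a,f_2^a,M_3^a)\, E_{M_0}\, (f_1^c,f_2^c,M_3^c)$. The one subtlety to handle carefully is that the amalgamation is applied over $M_3^b$ viewed as a submodel via the embeddings $g^b$ and $h^b$ (so strictly one should first replace $P$ by an isomorphic copy in which $g^b$ is the inclusion, apply Definition~\ref{amalgamation}, and transport back); this is routine bookkeeping with isomorphisms of a.e.c.'s, and is presumably the reason the authors cite \cite{jrsh875} rather than calling it ``easy.''

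The main (and only mild) obstacle is this bookkeeping in the transitivity step: making precise the amalgamation ``over $M_3^b$'' when $M_3^b$ sits inside $P$ and $Q$ only up to the embeddings $g^b, h^b$, and then chasing the resulting diagram to confirm all four commutativity equations (the two for $f_1$ and the two for $f_2$) hold simultaneously. No use of the forking relation $\dnf$ or of stability is needed — only the amalgamation property from Definition~\ref{definition of a good frame minus stability}(1).
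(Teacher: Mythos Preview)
Your argument is correct. The paper does not actually print a proof here (it cites \cite{jrsh875} and remarks that the proposition is not used later), but the commented-out proof in the source follows exactly your strategy for transitivity: take witnesses $(g_1,g_2,M_3^{a,b})$ and $(g_3,g_4,M_3^{b,c})$ for the two given $E_{M_0}$-relations, amalgamate $M_3^{a,b}$ and $M_3^{b,c}$ over the common $M_3^b$, and compose to obtain the required witnesses for $(f_1^a,f_2^a,M_3^a)\,E_{M_0}\,(f_1^c,f_2^c,M_3^c)$. Your handling of reflexivity and symmetry, and your remark that only amalgamation (not $\dnf$) is needed, are likewise correct.
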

We will not give here a proof, as we do not use this proposition.

\begin{definition}\label{definition uniqueness triples}
$K^{3,uq}$ is the set of triples $(M_{0,0},M_{1,0},a) \in
S^{bs}(M_{0,0})$ such that for every model $M_{0,1} \succ M_{0,0}$
there is a unique amalgamation (up to $E_{M_{0,0}}$)
$(M_{1,1},f_{1,0},f_{0,1})$ of $M_{1,0},M_{0,1}$ over $M_{0,0}$,
such that $f_{1,0}(tp(a,M_{1,0},\allowbreak M_{0,0}))$ does not
fork over $M_{0,0}$. A \emph{uniqueness triple} is a triple in
$\frak{k}^{3,uq}$.
\end{definition}

It is reasonable to assume that the existence property is
satisfied by $K^{3,uq}$, because if $N \in K_\lambda$, $|S(N)|
\leq \lambda^+$, and the existence property is not satisfied by
$K^{3,uq}$, then by the last corollary in section 4 of
\cite{jrsh875} there are $2^{\lambda^{+2}}$ non isomorphic models
in $K_{\lambda^{+2}}$ assuming weak set-theoretic assumptions.
Note that by Claims 1.18,1.20 of \cite{she46} we have the
following fact:
\begin{fact}
Assume $\lambda \geq \aleph_0$ and $\frak{k}_\lambda$ has
amalgamation. If $N \in K_\lambda$ and $|S(N)|>\lambda^+$, then
$|\{N' \in K_\lambda:N \prec N'\}/\cong|>\lambda^+$.
\end{fact}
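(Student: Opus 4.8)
The plan is to prove the contrapositive: assuming $\frak{k}_\lambda$ has amalgamation, if there are at most $\lambda^+$ isomorphism types among the models $N'\in K_\lambda$ with $N\prec N'$, then $|S(N)|\le\lambda^+$. Since $S(N)$ is the union of $S^{na}(N)$ with the at most $||N||=\lambda$ Galois types realized by elements of $N$ itself, it suffices to bound $|S^{na}(N)|$ by $\lambda^+$.

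The first step is a size-normalization: every $p\in S^{na}(N)$ is realized in some $N''\in K_\lambda$ with $N\prec N''$. Write $p=tp(a,N,N')$ with $a\in N'-N$; by $LST(\frak{k})\le\lambda$ pick $N''$ with $N\cup\{a\}\subseteq N''\preceq N'$ and $||N''||\le\lambda$, hence $||N''||=\lambda$ since $N\subseteq N''$. By Definition \ref{1.1}.e we get $N\preceq N''$, so $N\prec N''$ because $a\notin N$; and $\id_{N''}$ is an embedding of $N''$ into $N'$ over $N$ carrying $a$ to $a$, so $tp(a,N,N'')=p$. Dually, a single $N'\in K_\lambda$ with $N\prec N'$ realizes at most $||N'||=\lambda$ members of $S^{na}(N)$, namely the types $tp(b,N,N')$ for $b\in N'-N$. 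Consequently there are more than $\lambda^+$ distinct models $N'\in K_\lambda$ extending $N$ \emph{up to isomorphism over $N$}; if the statement's count is read in this (weaker) sense, we are done.

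The remaining difficulty --- and the heart of the matter --- is to pass to \emph{abstract} isomorphism types, i.e.\ to account for the fact that the isomorphism type of $N'$ forgets where the copy of $N$ sits inside it. The naive repair, counting pairs $(N',\text{an embedding }N\to N')$ up to isomorphism, only gives $|S^{na}(N)|\le(\#\text{iso types})\cdot(\#\text{embeddings }N\to N')\cdot\lambda\le\lambda^+\cdot 2^\lambda$, which is vacuous; so one genuinely needs Shelah's finer analysis. Its shape is: if $|S^{na}(N)|>\lambda^+$, fix pairwise distinct $p_\alpha\in S^{na}(N)$ for $\alpha<\lambda^{++}$, realize $p_\alpha$ by $a_\alpha\in N_\alpha-N$ with $N\prec N_\alpha\in K_\lambda$, arrange all the $N_\alpha$ to live on one fixed universe of size $\lambda$ with $N$ occupying a fixed position, and then use amalgamation together with a club/pigeonhole argument over the $\le\lambda^+$ isomorphism types to show that the remaining freedom --- the placement of the marked copy of $N$ inside a representative --- cannot separate $\lambda^{++}$ distinct types, a contradiction. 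This is exactly Claims 1.18 and 1.20 of \cite{she46}, which I would invoke rather than reprove; the main obstacle, as just indicated, is precisely this last step, where the forgotten position of $N$ must be controlled and ordinary cardinal arithmetic does not suffice.
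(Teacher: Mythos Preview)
The paper does not prove this Fact at all; it simply records it as a consequence of Claims~1.18 and~1.20 of \cite{she46} and moves on. Your proposal ultimately does the same thing: after the preliminary reductions you explicitly say you would invoke exactly those claims rather than reprove them. So your treatment is consistent with the paper's, and indeed more informative, since you spell out the easy normalization steps (realizing each type inside a $\lambda$-sized extension via the LST axiom, and the $\lambda$-many types per model bound) and isolate precisely where the nontrivial content lies --- the passage from isomorphism-over-$N$ to abstract isomorphism. Just be aware that those preliminary reductions are not a proof of the Fact; the substance is entirely in the cited claims, as you yourself acknowledge.
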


\begin{remark}\label{uniqueness is closed under isomorphism}
$K^{3,uq}$ is closed under isomorphisms: If $(M_0,M_1,a) \in
\frak{k}^{3,uq}$ and $f:M_1 \to M_1^*$ is an isomorphism, then
$(f[M_0],f[M_1],f(a)) \in \frak{k}^{3,uq}$.
\end{remark}

\begin{proposition}\label{non-forking by uniqueness}
If
\begin{enumerate}
\item $M_{0,0} \preceq M_{1,0} \preceq M_{1,1}$. \item $M_{0,0}
\preceq M_{0,1} \preceq M_{1,1}$. \item $(M_{0,0},M_{1,0},a) \in
K^{3,uq}$. \item $tp(a,M_{0,1},M_{1,1})$ does not fork over
$M_{0,0}$. \item $tp(b,M_{0,0},M_{0,1}) \in S^{bs}(M_{0,0})$.
\end{enumerate}
Then $tp(b,M_{1,0},M_{1,1})$ does not fork over $M_{0,0}$.
\end{proposition}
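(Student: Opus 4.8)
The plan is to produce two amalgamations of $M_{1,0}$ and $M_{0,1}$ over $M_{0,0}$, both of the type appearing in Definition \ref{definition uniqueness triples}, and then to invoke the uniqueness clause of hypothesis (3). First, because $M_{1,0}\preceq M_{1,1}$, $M_{0,1}\preceq M_{1,1}$ and $M_{0,0}$ sits inside both, $(M_{1,1},\id_{M_{1,0}},\id_{M_{0,1}})$ is an amalgamation of $M_{1,0},M_{0,1}$ over $M_{0,0}$, and by hypothesis (4) the type $tp(a,M_{0,1},M_{1,1})$ does not fork over $M_{0,0}$; thus this amalgamation is of the kind in Definition \ref{definition uniqueness triples}. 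Second, I would apply Fact \ref{axiom i is abandonment} with $(M_{0,0},M_{1,0},a,M_{0,1},b)$ in the roles of $(M_0,M_1,a_1,M_2,a_2)$ — legitimate by hypotheses (3) and (5) — to obtain an amalgamation $(M_3,g_1,g_2)$ of $M_{1,0},M_{0,1}$ over $M_{0,0}$ with $tp(g_1(a),g_2[M_{0,1}],M_3)$ not forking over $M_{0,0}$ (so this amalgamation is again of the kind in Definition \ref{definition uniqueness triples}) and, crucially, with $tp(g_2(b),g_1[M_{1,0}],M_3)$ not forking over $M_{0,0}$.

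Since $(M_{0,0},M_{1,0},a)\in K^{3,uq}$, these two amalgamations are equivalent over $M_{0,0}$ in the sense of Definition \ref{equivalent amalgamations}: there are a model $M_4$ and $\preceq$-embeddings $h\colon M_{1,1}\to M_4$ and $h'\colon M_3\to M_4$ with $h\restriction M_{1,0}=h'\circ g_1$ and $h\restriction M_{0,1}=h'\circ g_2$. Restricting the first equality to $M_{0,0}$ and using that $g_1$ is over $M_{0,0}$ gives $h\restriction M_{0,0}=h'\restriction M_{0,0}$; and since $b\in M_{0,1}$ we get $h'(g_2(b))=h(b)$ and $h'[g_1[M_{1,0}]]=h[M_{1,0}]$.

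Finally, since non-forking is preserved under isomorphisms, applying $h'$ to ``$tp(g_2(b),g_1[M_{1,0}],M_3)$ does not fork over $M_{0,0}$'' yields that $tp(h(b),h[M_{1,0}],h'[M_3])$ does not fork over $h[M_{0,0}]$. As $h[M_{1,0}]$, $h'[M_3]$ and $h[M_{1,1}]$ are all $\preceq M_4$ and $h[M_{1,0}]$ lies inside the other two, this type equals $tp(h(b),h[M_{1,0}],h[M_{1,1}])$, and non-forking over $h[M_{0,0}]$ passes to it (non-forking being a property of the type over its base — see the parenthetical in Definition \ref{definition of a good frame minus stability}(b)). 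Pulling back along the isomorphism $h^{-1}\colon h[M_{1,1}]\to M_{1,1}$ then gives that $tp(b,M_{1,0},M_{1,1})$ does not fork over $M_{0,0}$, which is the claim. The conceptual step is the first paragraph — building the second amalgamation with Fact \ref{axiom i is abandonment} and matching it against the tautological one $(M_{1,1},\id_{M_{1,0}},\id_{M_{0,1}})$ — and the main thing to watch is the bookkeeping of the last two paragraphs: chasing the equivalence diagram and checking that each type and each non-forking assertion is insensitive to the ambient model and transfers correctly under $h$ and $h'$.
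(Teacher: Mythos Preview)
Your proof is correct and follows exactly the same approach as the paper: compare the tautological amalgamation $(M_{1,1},\id_{M_{1,0}},\id_{M_{0,1}})$ with one produced by Fact~\ref{axiom i is abandonment}, invoke the uniqueness clause of $K^{3,uq}$, and transfer non-forking of $b$ across the equivalence. The paper's own proof is a four-line sketch that omits the diagram chase you carry out in your last two paragraphs, so your version is strictly more detailed but not different in substance.
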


\begin{proof}
We have two amalgamations of $M_{1,0},M_{0,1}$ over $M_{0,0}$ such
that the image of $tp(a,M_{0,1},M_{1,1})$ does not fork over
$M_{0,0}$: One is the amalgamation
$(M_{1,1},id_{M_{1,0}},id_{M_{0,1}})$. The second exists by Fact
\ref{axiom i is abandonment}. So by Definition \ref{definition
uniqueness triples} they are equivalent. So as in the second
amalgamation the types do not fork, so does in the first.
\end{proof}

%the following proposition is a preparation for the proof that primeness
%implies uniqueness. So if we omit prime triples then we have to
%delete this proposition. (complete...???)
\begin{proposition}\label{monotonicity of uniqueness}
If $(M_0,M_1,a) \in \frak{k}^{3,uq}$, $(M_0,M_2,a) \in
\frak{k}^{3,bs}$ and $M_0 \preceq M_2 \preceq M_1$ then
$(M_0,M_2,a) \in \frak{k}^{3,uq}$.
\end{proposition}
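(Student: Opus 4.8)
The plan is to verify the defining property of $\frak{k}^{3,uq}$ for the triple $(M_0,M_2,a)$ directly, by transporting amalgamations over $M_0$ between the two triples $(M_0,M_2,a)$ and $(M_0,M_1,a)$. So fix an arbitrary extension $M_0 \prec M_{0,1}$; we must show there is a unique amalgamation (up to $E_{M_0}$) $(M_{2,1},g_{2,0},g_{0,1})$ of $M_2,M_{0,1}$ over $M_0$ in which the image of $\ftp(a,M_2,M_{2,1})$ does not fork over $M_0$. Existence is immediate from Fact \ref{axiom i is abandonment} (or Proposition \ref{versions of extension}.1), since $\ftp(a,M_0,M_2) \in S^{bs}(M_0)$ by hypothesis, giving some amalgamation in which $g_{2,0}(a)$ has non-forking type over $M_0$. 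The content is uniqueness.

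For uniqueness, suppose $(M_{2,1}^x,g_{2,0}^x,g_{0,1}^x)$ for $x=a,b$ are two such amalgamations of $M_2,M_{0,1}$ over $M_0$, both with the image of $a$ non-forking over $M_0$. First I would extend each of these to an amalgamation of the \emph{larger} model $M_1$ with $M_{0,1}$ over $M_0$: using amalgamation in $\frak{k}_\lambda$ applied to $M_1$ and $M_{2,1}^x$ over $M_2$ (legitimate since $M_2 \preceq M_1$ and $M_2 \preceq M_{2,1}^x$), I obtain $N^x \succeq M_{2,1}^x$ and an embedding $h^x : M_1 \to N^x$ over $M_2$; composing, $(N^x, h^x, (\text{emb. of } M_{0,1}))$ is an amalgamation of $M_1,M_{0,1}$ over $M_0$. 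Because $h^x$ extends $\id_{M_2}$ it fixes $a$, so the image of $\ftp(a,M_1,N^x)$ restricted appropriately still does not fork over $M_0$ — here I use monotonicity (Definition \ref{definition of a good frame minus stability}.b): the image of $a$ having non-forking type over $M_0$ in the $M_2$-amalgamation is preserved when we pass to the bigger model $M_1$, since $\ftp(a,M_1,\cdot)$ does not fork over $M_0$ follows from $\ftp(a,M_2,\cdot)$ not forking over $M_0$ together with $(M_0,M_1,a) \in \frak{k}^{3,bs}$ and transitivity/monotonicity. Since $(M_0,M_1,a) \in \frak{k}^{3,uq}$, these two amalgamations of $M_1,M_{0,1}$ over $M_0$ are $E_{M_0}$-equivalent; restricting the equivalence witnesses from $M_1$ back down to $M_2$ (which embeds into each via $g_{2,0}^x$, compatibly) yields the $E_{M_0}$-equivalence of the original amalgamations of $M_2,M_{0,1}$ over $M_0$.

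The step I expect to be the main obstacle is the last one: checking that the diagram witnessing $E_{M_0}$-equivalence for the $M_1$-amalgamations actually restricts to a witnessing diagram for the $M_2$-amalgamations, i.e. that the restriction maps are compatible with $g_{2,0}^a, g_{2,0}^b$ and that the common model and the two comparison maps still make the relevant triangles commute over $M_0$. This is a diagram-chase that relies on all maps being over $M_0$ and on $M_2 \preceq M_1$ sitting inside everything coherently; one must be slightly careful that the extension maps $h^a,h^b$ were chosen over $M_2$ so that the back-restriction is literally the given $g_{2,0}^x$. A subsidiary point to confirm is that $\ftp(a,M_0,M_2)\in S^{bs}(M_0)$ really does upgrade to $\ftp(a,M_2,\cdot)$ and $\ftp(a,M_1,\cdot)$ lying in $S^{bs}$ and not forking over $M_0$ in the amalgams we build, which is exactly what Fact \ref{axiom i is abandonment} and Proposition \ref{transitivity} deliver.
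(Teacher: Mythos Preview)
Your argument is correct and is the natural unpacking of what the paper records only as ``Easy.'' Two minor clarifications. First, the step where you check that $h^x(a)$ has non-forking type over the image of $M_{0,1}$ in $N^x$ needs only monotonicity in the ambient model (Definition \ref{definition of a good frame minus stability}(3)(b)): once you arrange $M_{2,1}^x \preceq N^x$ and $h^x \restriction M_2 = g_{2,0}^x$, you have $h^x(a)=g_{2,0}^x(a)$ and the non-forking statement in $M_{2,1}^x$ passes verbatim to $N^x$; no transitivity and no reference to $\ftp(a,M_1,\cdot)$ is needed. Second, the back-restriction you flag as the main obstacle indeed goes through routinely: if $f^a,f^b,M_3$ witness $E_{M_0}$-equivalence of the two $M_1$-amalgamations, then $f^a \restriction M_{2,1}^a$ and $f^b \restriction M_{2,1}^b$, together with the same $M_3$, witness it for the $M_2$-amalgamations, precisely because $h^x \restriction M_2 = g_{2,0}^x$ makes each $M_2$-triangle a literal sub-diagram of the corresponding $M_1$-triangle.
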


\begin{proof}
Easy.
\end{proof}

\begin{definition}\label{4.4}
$\langle M_\alpha,a_\alpha:\alpha<\alpha^* \rangle ^\frown \langle
M_{\alpha^*} \rangle$ is said to be independent over $M$ \emph{by
uniqueness triples} when we add in Definition \ref{definition of
independence}: $(M_\alpha,M_{\alpha+1},a_\alpha) \in K^{3,uq}$.
Similarly for independent set.
\end{definition}

Roughly speaking the following proposition says that: Suppose that
we have an independent sequence.
\begin{enumerate}[(a)]
\item One can replace the first triple in the sequence by any
other triple with the same type. \item Like item 1, but for all
the triples simultaneously, i.e. if someone chooses the triples
(up to isomorphisms), we will still be able to find a witness for
independence. \item If the existence property is satisfied by
$K^{3,uq}$, then the sequence is independent by uniqueness
triples.
%The same proposition for prime triples is correct too.
\end{enumerate}

\begin{definition}\label{the independence game}
Suppose $\langle M_{0,\alpha},a_\alpha:\alpha<\alpha^* \rangle
^\frown \langle M_{0,\alpha^*} \rangle$ is an independent sequence
over $M$. \emph{The independence game} for the sequence $\langle
M_{0,\alpha},a_\alpha:\alpha<\alpha^* \rangle ^\frown \langle
M_{0,\alpha^*} \rangle$ over $M$ is a two-player game that lasts
$\alpha^*+1$ moves.
\begin{displaymath}
\xymatrix{b_2 \in M_{3,0} \ar[rrr]^{f_3} &&& M_{3,3}\\
b_1 \in M_{2,0} \ar[u]^{id} \ar[rr]^{f_2} &&
M_{2,2} \ar[r]^{id}  & M_{2,3} \ar[u]^{id}\\
b_0 \in M_{1,0} \ar[u]^{id} \ar[r]^{f_1} & M_{1,1} \ar[r]^{id} &
M_{1,2} \ar[u]^{id} \ar[r]^{id}
 & M_{1,3} \ar[u]^{id}\\
M_{0,0} \ar[r]^{f_0} \ar[u]^{id} & a_0 \in M_{0,1} \ar[u]^{id}
\ar[r]^{id} & a_1 \in M_{0,2} \ar[u]^{id} \ar[r]^{id} & a_2 \in
M_{0,3} \ar[u]^{id}}
\end{displaymath}
The $\alpha+1$ move: Player 1 chooses $M_{\alpha+1,0},b_\alpha$,
such that $tp(b_\alpha,M_{\alpha,0},M_{\alpha+1,0})=\allowbreak
tp(a_\alpha,M_{\alpha,0},M_{\alpha,\alpha+1})$. Then player 2
chooses $f_{\alpha+1}$ and a sequence $\langle
M_{\alpha+1,\beta}:\beta \in [\alpha+1,\alpha^*] \rangle$ (for
$\beta \in (0,\alpha+1)$, $M_{\alpha+1,\beta}$ is not defined)
such that:
\begin{enumerate}
\item $f_{\alpha+1}$ is an injection with domain $M_{\alpha+1,0}$.
\item $f_\alpha \subset f_{\alpha+1}$. \item
$f_{\alpha+1}(b_\alpha)=a_\alpha$. \item For $\beta \in
(\alpha,\alpha^*)$
$tp(a_\beta,M_{\alpha+1,\beta},M_{\alpha+1,\beta+1})$ does not
fork over $M_{\alpha,\beta}$.
\end{enumerate}
For $\alpha=0$ we define $f_0:=id_{M_{0,0}}$. For $\alpha$ limit,
in the $\alpha$ move, we define $M_{\alpha,0}:=\bigcup
\{M_{_\beta,0}:\gamma<\alpha \},\ f_\alpha:=\bigcup
\{f_\gamma:\gamma<\alpha \}$ and for $\beta \in [\alpha,\alpha^*]$
$M_{\alpha,\beta}:=\bigcup \{M_{\gamma,\beta}:\gamma<\alpha \}$.
So for $\alpha=0$ or limit, in the $\alpha$ move the players do
not have any choice. Player 2 wins if he has always a legal move
(so in this case, in the end of the game, the sequence $\langle
M_{\alpha,\alpha},a_\alpha:\alpha<\alpha^* \rangle ^\frown \langle
M_{\alpha^*,\alpha^*} \rangle$ is independent over $M$).
\end{definition}

\begin{proposition} \label{replacetriples}\label{4.9}
Suppose: The sequence $\langle
M_{0,\alpha},a_\alpha:\alpha<\alpha^* \rangle^\frown \langle
M_{0,\alpha^*} \rangle$ is independent over $M$.

(a) If $tp(b,M_{0,0},M_{1,0})=tp(a_0,M_{0,0},M_{0,1})$, Then for
some sequence $\langle M_{1,\alpha}:0<\alpha \leq \alpha^*
\rangle$ and $f$ the following hold:
\begin{enumerate}
\item $\langle M_{1,\alpha}:\alpha \leq \alpha^* \rangle$ is an
increasing continuous sequence of models in $K_\lambda$. \item
$M_{0,\alpha} \preceq M_{1,\alpha}$ for each $\alpha \leq
\alpha^*$. \item $0<\alpha<\alpha^* \Rightarrow a_\alpha \in
M_{1,\alpha+1}-M_{1,\alpha}$. \item $0<\alpha<\alpha^* \Rightarrow
tp(a_\alpha,M_{1,\alpha},M_{1,\alpha+1})$ does not fork over $M$.
\item $f:M_{1,0} \to M_{1,1}$ is an embedding over $M_{0,0}$, and
$f(b)=a_0$. \item The sequence $\langle M_{0,0},a_0,f[M_{1,0}],a_1
\rangle ^\frown \langle M_{1,\alpha},a_\alpha:\allowbreak 1<\alpha
<\alpha^*
 \rangle  ^\frown \langle M_{1,\alpha^*} \rangle$ is
independent over $M$.
\begin{displaymath}
\xymatrix{b \in M_{1,0} \ar[r]^{f} & M_{1,1} \ar[r] & a_1 \in
M_{1,2} \ar[r]  & a_2 \in M_{1,3}\\
M_{0,0} \ar[r] \ar[u] & a_0 \in M_{0,1} \ar[u] \ar[r] & a_1 \in
M_{0,2} \ar[u] \ar[r] & a_2 \in M_{0,3} \ar[u]}
\end{displaymath}
\end{enumerate}
(b)  Player 2 has a winning strategy in the independence game.

(c) If the existence property is satisfied by $K^{3,uq}$, then the
sequence $\langle a_\alpha:\alpha<\alpha^* \rangle$ is independent
in $(M,M_0,M_{\alpha^*})$ by uniqueness triples.
%The same proposition for prime triples is correct too.
\end{proposition}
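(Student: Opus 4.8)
For part (a), the plan is to insert the new triple into the old sequence by a double amalgamation. Since $tp(b,M_{0,0},M_{1,0})=tp(a_0,M_{0,0},M_{0,1})$, Proposition \ref{1.5} gives an amalgamation of $M_{1,0},M_{0,1}$ over $M_{0,0}$ identifying $b$ with $a_0$; after renaming we may take it to fix $M_{0,1}$, obtaining a model $M^*\succeq M_{0,1}$ and an embedding $h\colon M_{1,0}\to M^*$ over $M_{0,0}$ with $h(b)=a_0$. Next apply Fact \ref{3.3} to amalgamate $M^*$ against the tail $\langle M_{0,\alpha}:1\le\alpha\le\alpha^*\rangle$ over $M_{0,1}$ (choosing, by density of basic types, an element of $M^*-M_{0,1}$ to drive it, or treating the degenerate case $M^*=M_{0,1}$ directly): this yields an increasing continuous $\langle M_{1,\alpha}:1\le\alpha\le\alpha^*\rangle$ with $M_{0,\alpha}\preceq M_{1,\alpha}$, with $tp(a_\alpha,M_{1,\alpha},M_{1,\alpha+1})$ not forking over $M_{0,1}$, and an embedding $g\colon M^*\to M_{1,1}$ over $M_{0,1}$. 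Put $f:=g\circ h$; then $f$ is over $M_{0,0}$ and $f(b)=g(a_0)=a_0$ because $g$ fixes $M_{0,1}\ni a_0$. Finally one upgrades ``does not fork over $M_{0,1}$'' to ``does not fork over $M$'': the new type $tp(a_\alpha,M_{1,\alpha},M_{1,\alpha+1})$ restricts over $M_{0,\alpha}$ to $tp(a_\alpha,M_{0,\alpha},M_{0,\alpha+1})$ (since $M_{0,\alpha+1}\preceq M_{1,\alpha+1}$ contains $a_\alpha$), which does not fork over $M$, so by monotonicity and Proposition \ref{transitivity} neither does it; and $tp(a_0,M_{0,0},f[M_{1,0}])=tp(a_0,M_{0,0},M_{0,1})$ does not fork over $M$. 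Checking that the displayed sequence in clause (6) is increasing continuous and that clauses (1)--(5) hold is then routine.

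For part (b), Player $2$'s strategy is to iterate the construction of part (a). Inductively Player $2$ maintains: $f_\alpha$ is an embedding over $M_{0,0}$ extending every earlier $f_\gamma$ with $f_\alpha(b_\gamma)=a_\gamma$ for $\gamma<\alpha$; the columns stack ($M_{0,\beta}\preceq M_{1,\beta}\preceq\cdots\preceq M_{\alpha,\beta}$); and ``row $\alpha$'', the sequence $\langle M_{\alpha,\beta}:\alpha\le\beta\le\alpha^*\rangle$ with the elements $a_\beta$ ($\alpha\le\beta<\alpha^*$), is independent over $M$. At a successor move Player $1$ supplies $M_{\alpha+1,0}$ and $b_\alpha$ of the prescribed type; Player $2$ transports it by $f_\alpha$ and runs the amalgamation of part (a) (via Fact \ref{3.3}) against row $\alpha$, which yields the next row together with the required $f_{\alpha+1}\supseteq f_\alpha$, $f_{\alpha+1}(b_\alpha)=a_\alpha$, and the non-forking of $tp(a_\beta,M_{\alpha+1,\beta},M_{\alpha+1,\beta+1})$ over $M_{\alpha,\beta}$. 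At a limit move Player $2$ has no choice; legality there (increasing continuity, and the relevant types not forking over $M$, whence over $M_{\alpha-1,\beta}$) follows from smoothness (Definition \ref{1.1}.d) and continuity (Definition \ref{definition of a good frame minus stability}.h), exactly as in the proof of Fact \ref{3.3}. Hence Player $2$ always has a legal move, and the diagonal $\langle M_{\alpha,\alpha},a_\alpha\rangle^\frown\langle M_{\alpha^*,\alpha^*}\rangle$ is independent over $M$.

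For part (c), the plan is to combine (b) with the existence property for $K^{3,uq}$. After applying Proposition \ref{increasing the big model} to arrange that the original witness of the independence of $\langle a_\alpha:\alpha<\alpha^*\rangle$ has a sufficiently large last model, run the independence game of (b) for that witness, with Player $1$ playing at each move a \emph{uniqueness} triple: by the game invariant the type Player $1$ must realize over the current base $M_{\alpha,0}$ is (the pull-back of) a type not forking over $M$, hence in $S^{bs}(M_{\alpha,0})$, so the existence property provides a uniqueness triple $(M_{\alpha,0},M_{\alpha+1,0},b_\alpha)$ realizing it. By (b), Player $2$ wins; consider the pushed-forward column $\langle f_\alpha[M_{\alpha,0}]:\alpha\le\alpha^*\rangle$. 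It is increasing continuous (the $M_{\alpha,0}$ and the $f_\alpha$ are built continuously), with $f_0[M_{0,0}]=M_{0,0}$; each triple $(f_\alpha[M_{\alpha,0}],f_{\alpha+1}[M_{\alpha+1,0}],a_\alpha)$ is a uniqueness triple by Remark \ref{uniqueness is closed under isomorphism} (it is the $f_{\alpha+1}$-image of Player $1$'s uniqueness triple); its element $a_\alpha=f_{\alpha+1}(b_\alpha)$ lies outside $f_\alpha[M_{\alpha,0}]$; and $tp(a_\alpha,f_\alpha[M_{\alpha,0}],f_{\alpha+1}[M_{\alpha+1,0}])$ does not fork over $M$, by restricting (monotonicity) the corresponding diagonal type. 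So this sequence is independent over $M$ by uniqueness triples, and with $M_{\alpha^*,\alpha^*}$ serving as the common bound for the last model it witnesses that $\langle a_\alpha:\alpha<\alpha^*\rangle$ is independent in $(M,M_0,M_{\alpha^*})$ by uniqueness triples.

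The main obstacle I expect is in part (a): getting \emph{simultaneously} $f(b)=a_0$ and $M_{0,\alpha}\preceq M_{1,\alpha}$. A single application of Fact \ref{3.3} with $M_{1,0}$ as the extra model only produces an $f$ whose value $f(b)$ has the right type over $M_{0,0}$, and correcting $f$ by an isomorphism over $M_{0,0}$ to make $f(b)=a_0$ would in general destroy the inclusions $M_{0,\alpha}\preceq M_{1,\alpha}$; this is exactly why the preliminary amalgamation fixing $M_{0,1}$ is performed first, so that the subsequent isomorphism $g$ is over $M_{0,1}\ni a_0$ and therefore automatically sends $b$ to $a_0$. Once (a) is arranged this way, (b) and (c) are mostly bookkeeping: tracking the increasing continuous family $\langle f_\alpha\rangle$ through the game, and, in (c), keeping the pushed-forward column's side conditions aligned with Definition \ref{definition of independence}.
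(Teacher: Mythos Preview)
Your proposal is correct and follows essentially the same approach as the paper: for (a), first amalgamate $M_{1,0}$ with $M_{0,1}$ over $M_{0,0}$ so that $b\mapsto a_0$, then feed the result into Fact~\ref{3.3} along the tail and compose to get $f$; for (b), iterate (a) to give Player~2 a legal move at every successor step; for (c), let Player~1 always pick a uniqueness triple and read off the witness from the pushed-forward column. Your write-up is in fact more complete than the paper's own proof (which leaves the verification of clauses~4 and~5 in (a) as ``complete\ldots'' and gives only one line for (c)); in particular, your transitivity argument for clause~4 and your observation about needing density of basic types to invoke Fact~\ref{3.3} are exactly the details the paper omits. Two very minor remarks: the appeal to Proposition~\ref{increasing the big model} at the start of (c) is unnecessary, since the hypothesis already hands you the full independent sequence $\langle M_{0,\alpha},a_\alpha\rangle^\frown\langle M_{0,\alpha^*}\rangle$; and the phrase ``over $M_{\alpha-1,\beta}$'' in your limit-step discussion of (b) is a slip (there is no predecessor at a limit) --- what you are actually verifying there is that the row-invariant ``$\langle M_{\alpha,\beta},a_\beta:\alpha\le\beta\rangle$ is independent over $M$'' persists under unions, via continuity, so that (a) can be reapplied at the next successor.
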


\begin{proof} (a): As $tp(b,M_{0,0},M_{1,0})=tp(a_0,M_{0,0},M_{0,1})$, there are
$g,M_{1,1}^{temp}$ such that $M_{0,1} \preceq M_{1,1}^{temp},\
g:M_{1,0} \to M_{1,1}^{temp}$ is an embedding over $M_{0,0}$, and
$g(b)=a_0$. Now by Fact \ref{3.3} (amalgamation of a model and a
sequence), for some $h,\langle M_{1,\alpha}:2 \leq \alpha \leq
\alpha^* \rangle$ the following hold:
\begin{enumerate}
\item For $\alpha \in [2,\alpha^*]$ we have $M_{0,\alpha} \preceq
M_{1,\alpha}$. \item For $\alpha \in [1,\alpha^*)$
$tp(a_\alpha,M_{1,\alpha},M_{1,\alpha+1})$ does not fork over
$M_{0,\alpha}$. \item $h:M_{1,1}^{temp} \to M_{1,2}$ is an
embedding over $M_{0,1}$.
\end{enumerate}

Now define $f:=h \circ g$. Clearly $f(b)=a_0$ and $f$ fixes
$M_{0,0}$. Why is condition 4 satisfies? complete... Why is
condition 5 satisfies? complete...

%the referee wrote that the reader know to check arrows. so I
%delete:
%Why does $f(b)=a_0$? Since $h\restriction M_{0,1}=id_{M_{0,1}}$
%and $a_0 \in M_{0,1}$, $h(a_0)=a_0$. So $f(b)=h(g(b))=h(a_0)=a_0$.

$(a) \Rightarrow (b)$: By item a, player 2 has always a legal
move.
\begin{comment}
I delete (so it become a comment) the elaborating of the proof,
because it has mistakes, and the proof is trivial: in the
$\alpha+1$ move, instead of the models $M_{0,0},N_1,M_{1,0}$ which
appear in the assertion of item a, substitute the models
$N_\alpha,N_{\alpha+1},N_{\alpha+1}$, and for $\beta \in
(0,\alpha^*]$ instead of the models $M_{0,\beta},M_{1,\beta}$
substitute the models $M_{\alpha,\beta},M_{\alpha+1,\beta}$
respectively ($M_{1,1}$ in item a has no parallel here).
\end{comment}
Now we have to prove that if player 2 wins the game, then (in the
end of the game) the sequence $\langle
M_{\beta,\beta},a_\beta:\beta<\alpha^* \rangle ^\frown \langle
M_{\alpha^*,\alpha^*} \rangle$ is independent over $M$.

First, why does the sequence $\langle
M_{\beta,\beta}:\beta<\alpha^* \rangle$ is continuous? Take
$\beta<\alpha^*$ limit and take an element $x \in
M_{\beta,\beta}$. There is an $\alpha<\beta$ such that $x \in
M_{\alpha,\beta}$. But $M_{\alpha,\beta}=\bigcup
\{M_{\alpha,\epsilon}:\epsilon<\beta\}$. so there is an
$\epsilon<\beta$ such that $x \in M_{\alpha,\epsilon}$. Define
$\gamma:=Max\{\alpha,\epsilon\}$. So $x \in M_{\gamma,\gamma}$ and
$\gamma<\beta$.

It remains to prove that for $\beta<\alpha^*$ the type
$tp(a_\beta,M_{\beta,\beta},M_{\beta+1,\beta+1})$ does not fork
over $M$. Let $\beta<\alpha^*$. For $\alpha<\beta$, by clause 4
$tp(a_\beta,M_{\alpha+1,\beta},M_{\alpha+1,\beta+1})$ does not
fork over $M_{\alpha,\beta}$. The sequence $\langle
M_{\alpha,\beta}:\alpha \leq \beta \rangle$ is increasing and
continuous. So $tp(a_\beta,M_{\beta,\beta},M_{\beta,\beta+1})$
does not fork over $M_{0,\beta}$. But by assumption, the sequence
$\langle M_{0,\alpha}:\alpha<\alpha^* \rangle ^\frown \langle
M_{0,\alpha^*} \rangle$ is independent over $M$. So
$tp(a_\beta,M_{0,\beta},M_{0,\beta+1})$ does not fork over $M$.
Hence by the transitivity (Proposition \ref{transitivity}),
$tp(a_\beta,M_{\beta,\beta},M_{\beta+1,\beta})$ does not fork over
$M$. But
$tp(a_\beta,M_{\beta,\beta},M_{\beta+1,\beta+1})=tp(a_\beta,M_{\beta,\beta},M_{\beta,\beta+1})$.

$b \Rightarrow c$: In the $\alpha+1$ step player 1 chooses a
triple in $K^{3,uq}$ and player 2 plays a winning strategy.
\end{proof}

\begin{proposition}\label{b_1-}
If
\begin{enumerate}
\item $\langle M_\alpha,a_\alpha:\alpha \leq \beta \rangle ^\frown
\langle M_{\beta+1} \rangle$ is independent over $M$. \item
$(M_0,N,a_\beta) \in K^{3,uq}$. \item $N \preceq M_{\beta+1}$.
\end{enumerate}
Then $\langle a_\alpha:\alpha<\beta \rangle$ is independent in
$(M,N,M_{\beta+1})$.
\end{proposition}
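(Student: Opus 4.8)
The plan is to absorb $a_\beta$ into the base model: starting from the given independent sequence, I would move the tail $\langle a_\alpha:\alpha<\beta\rangle$ so that it sits over $N$ instead of over $M_0$, using Proposition~\ref{amalgamation of a model and an independent sequence} to build the relevant amalgamation and the hypothesis $(M_0,N,a_\beta)\in K^{3,uq}$ to control the transport map.

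First I would collect the non-forking data. Truncating the given sequence, $\langle M_\alpha,a_\alpha:\alpha<\beta\rangle^\frown\langle M_\beta\rangle$ is independent over $M$, so $\langle a_\alpha:\alpha<\beta\rangle$ is independent in $(M,M_0,M_\beta)$ (witnessed by the truncated model sequence). Since $tp(a_\beta,M_\beta,M_{\beta+1})$ does not fork over $M$, monotonicity of non-forking (Definition~\ref{definition of a good frame minus stability}) gives that $tp(a_\beta,M_0,M_{\beta+1})$ does not fork over $M$; and as $M_0\prec N\preceq M_{\beta+1}$ with $a_\beta\in N$, we get $tp(a_\beta,M_0,N)=tp(a_\beta,M_0,M_{\beta+1})$, so this type lies in $S^{bs}(M_0)$ and does not fork over $M$.

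Next I would apply Proposition~\ref{amalgamation of a model and an independent sequence} to $\langle a_\alpha:\alpha<\beta\rangle$, independent in $(M,M_0,M_\beta)$, together with the basic type $tp(a_\beta,M_0,N)$: this produces an amalgamation $(id_{M_\beta},f,M_3)$ of $M_\beta$ and $N$ over $M_0$ with $\langle a_\alpha:\alpha<\beta\rangle$ independent in $(M,f[N],M_3)$ --- witnessed, via the construction behind that proposition (Fact~\ref{3.3}), by a sequence $\langle R_\alpha:\alpha\le\beta\rangle$ with $R_0=f[N]$ and $M_3\preceq R_\beta$ --- and with $tp(f(a_\beta),M_\beta,M_3)$ not forking over $M_0$. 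Now both $(f,id_{M_\beta},M_3)$ and the ``internal'' amalgamation $(id_N,id_{M_\beta},M_{\beta+1})$ (legitimate since $N,M_\beta\preceq M_{\beta+1}$) are amalgamations of $N$ and $M_\beta$ over $M_0$ in which the image of $a_\beta$ has type not forking over $M_0$ (for the first this is what was just obtained; for the second it is the last line of the previous paragraph). Since $(M_0,N,a_\beta)\in K^{3,uq}$, Definition~\ref{definition uniqueness triples} forces these amalgamations to be equivalent over $M_0$. Unwinding Definition~\ref{equivalent amalgamations} and post-composing the equivalence with an isomorphism that turns the $M_{\beta+1}$-leg into an inclusion, I obtain a model $M^*\succeq M_{\beta+1}$ and an embedding $g: M_3\to M^*$ over $M_0$ with $g\circ f=id_N$ and $g\restriction M_\beta=id_{M_\beta}$; in particular $g$ fixes every $a_\alpha$ (since $a_\alpha\in M_{\alpha+1}\preceq M_\beta$) and $g[f[N]]=N$. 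Extracting such a $g$ is the heart of the proof and the only place the uniqueness-triple hypothesis enters; everything else is routine manipulation.

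Finally I would extend $g$ to an embedding $\bar g$ of $R_\beta$ over $M_0$ literally extending $g$ (amalgamating $R_\beta$, over $M_3$, into an extension of $M^*$); then $\bar g$ still fixes every $a_\alpha$ and $\bar g[R_0]=\bar g[f[N]]=N$. Transporting the independent sequence $\langle R_\alpha,a_\alpha:\alpha<\beta\rangle^\frown\langle R_\beta\rangle$ over $M$ along $\bar g$ (allowed since the frame respects isomorphisms and $\bar g$ fixes $M$ and each $a_\alpha$) gives the independent sequence $\langle \bar g[R_\alpha],a_\alpha:\alpha<\beta\rangle^\frown\langle \bar g[R_\beta]\rangle$ over $M$, based at $N$, with $N\preceq \bar g[R_\beta]$. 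Since also $N\preceq M_{\beta+1}$, taking $M^{++}:=N$ in Definition~\ref{definition of independence} shows $\langle a_\alpha:\alpha<\beta\rangle$ is independent in $(M,N,M_{\beta+1})$. I expect the main obstacle to be this third step: converting the amalgamation-equivalence coming from $K^{3,uq}$ into an embedding that fixes $M_\beta$ (hence the tail elements) while normalising the copy $f[N]$ back onto $N$; one may also want to double-check, against Proposition~\ref{improving independence's definition proposition}, that the produced witness is of exactly the shape required by Definition~\ref{definition of independence}.
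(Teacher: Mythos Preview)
Your approach is essentially the paper's own proof: apply Proposition~\ref{amalgamation of a model and an independent sequence} to amalgamate $N$ with the truncated sequence, use the hypothesis $(M_0,N,a_\beta)\in K^{3,uq}$ to conclude that this amalgamation is $E_{M_0}$-equivalent to the internal one $(id_N,id_{M_\beta},M_{\beta+1})$, extract an embedding $g$ over $M_\beta$ with $g\circ f=id_N$, and transport the witness. One minor simplification: the extra extension $\bar g$ of $g$ to $R_\beta$ is unnecessary, since the construction underlying Proposition~\ref{amalgamation of a model and an independent sequence} (namely Fact~\ref{3.3}) produces the witness sequence with its top equal to the amalgam $M_3$ itself, so $g$ already acts on the whole witness and you may conclude directly, as the paper does, that $\langle a_\alpha:\alpha<\beta\rangle$ is independent in $(M,N,g[M_3])$ and hence in $(M,N,M_{\beta+1})$ via $M^{++}:=N$.
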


\begin{proof}
By Proposition \ref{amalgamation of a model and an independent
sequence} there is an amalgamation $(f,id_{M_\beta},M^{+})$ of
$N,M_\beta$ over $M_0$ such that $tp(f(a_\beta),M_\beta,M^{+})$
does not fork over $M_0$ and $\langle a_\alpha:\alpha<\beta
\rangle$ is independent in $(M,f[N],M^{+})$. Since
$tp(a_\beta,M_\beta,M_{\beta+1})$ does not fork over $M_0$, by the
definition of $K^{3,uq}$,
$(f,id_{M_\beta},M^+)E_{M_0}(id_{M_1},id_{M_\beta},M_{\beta+1})$.
So for some $g,M^{++}$ the following hold:
\begin{displaymath}
\xymatrix{& M^+ \ar[r]^{g} & M^{++}\\
N \ar[ru]^{f} \ar[rr]^{id} && M_{\beta+1} \ar[u]^{id} \\
M_0 \ar[u]^{id} \ar[r]^{id} & M_\beta \ar[uu]^{id} \ar[ru]^{id}}
\end{displaymath}

\begin{enumerate}
\item $M_{\beta+1} \preceq M^{++}$. \item $g:M^{+} \to M^{++}$ is
an embedding over $M_\beta$. \item $g \circ f=id_{N}$.
\end{enumerate}
So $\langle a_\alpha:\alpha<\beta \rangle$ is independent in
$(M,g[f[M_1]],g[M^+])$. Therefore it is independent in
$(M,N,M_{\beta+1})$.

\begin{comment}
A wrong proof: $J-\{a\}$ is independent in $(M,M_0,M^+)$. So there
is an independent sequence $\langle
N_\alpha,a_\alpha:\alpha<\alpha^* \rangle ^\frown \langle
N_{\alpha^*} \rangle$ such that $\{a_\alpha:\alpha<\alpha^*\}$ is
an enumeration of $J-\{a\}$ without repetitions, $N_0=M_0$ and
$M^+ \preceq N_{\alpha^*}$. Since $(M_0,M_1,a) \in K^{3,uq}$ and
the set $\{a,a_0\}$ is independent in $(M,M_0,M^+)$, by
Proposition \ref{non-forking by uniqueness} $tp(a_0,M_1,N_1)$ does
not fork over $M_0$ so over $M$. Therefore the sequence $\langle
M_0,a,M_1,a_0 \rangle ^\frown \langle N_\alpha,a_\alpha:\alpha \in
(0,\alpha^*) \rangle ^\frown \langle N_{\alpha^*} \rangle$ is
independent over $M$.
\end{comment}
\end{proof}

\begin{proposition}\label{b_1}
If
\begin{enumerate}
\item $J$ is finitely independent in $(M,M_0,M^+)$. \item $a \in
J$. \item $(M_0,M_1,a) \in K^{3,uq}$. \item $M_1 \preceq M^{+}$.
\end{enumerate}
Then $J-\{a\}$ is finitely independent in $(M,M_1,M^+)$.
\end{proposition}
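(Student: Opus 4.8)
The plan is to deduce this from Proposition~\ref{b_1-}, which is exactly the ``a single uniqueness triple removed from the end of a sequence'' version of the statement. Since being finitely independent in $(M,M_1,M^+)$ means precisely that every finite subset is independent in $(M,M_1,M^+)$, I fix an arbitrary finite $J' \subseteq J-\{a\}$ and aim to show that $J'$ is independent in $(M,M_1,M^+)$. By hypothesis~(1), $J' \cup \{a\}$ is a finite subset of $J$, hence it is independent in $(M,M_0,M^+)$.

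Next I would move $a$ to the end of an enumeration: list $J' = \{c_\alpha : \alpha < n\}$ without repetitions and put $c_n := a$, so that $\langle c_\alpha : \alpha < n+1\rangle$ enumerates $J' \cup \{a\}$ without repetitions. By Theorem~\ref{change order of a finite set}(a), the \emph{sequence} $\langle c_\alpha : \alpha < n+1\rangle$ is then independent in $(M,M_0,M^+)$. Unwinding Definition~\ref{definition of independence}(b), I obtain an increasing continuous model sequence $\langle P_\alpha : \alpha \leq n+1 \rangle$ whose first model is \emph{literally} $M_0$, with $\langle P_\alpha,c_\alpha : \alpha \leq n\rangle ^\frown \langle P_{n+1}\rangle$ independent over $M$ and with $M^+$ and $P_{n+1}$ admitting a common extension; using Proposition~\ref{increasing the big model} I may moreover enlarge the $P_\alpha$'s so that $M^+ \preceq P_{n+1}$, and this keeps $P_0 = M_0$ unchanged. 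Since $M_1 \preceq M^+$ by hypothesis~(4), we now have $M_1 \preceq P_{n+1}$.

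Now I would apply Proposition~\ref{b_1-} with $\beta := n$, to the independent sequence $\langle P_\alpha,c_\alpha : \alpha \leq n\rangle ^\frown \langle P_{n+1}\rangle$, using the uniqueness triple $(M_0,M_1,a) = (P_0,M_1,c_n) \in K^{3,uq}$ given by hypothesis~(3) together with the inclusion $M_1 \preceq P_{n+1}$. The conclusion of Proposition~\ref{b_1-} is that the sequence $\langle c_\alpha : \alpha < n\rangle$ is independent in $(M,M_1,P_{n+1})$; since $\{c_\alpha : \alpha < n\} = J'$, unwinding the definitions gives that the \emph{set} $J'$ is independent in $(M,M_1,P_{n+1})$, and hence in $(M,M_1,M^+)$ because $M^+ \preceq P_{n+1}$ (a witnessing sequence together with a common extension of its last model and $P_{n+1}$ is also one for $M^+$). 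As $J'$ was an arbitrary finite subset of $J-\{a\}$, this shows that $J-\{a\}$ is finitely independent in $(M,M_1,M^+)$.

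The one genuinely delicate point — the thing to be careful about — is the base-model bookkeeping: Proposition~\ref{b_1-} insists that the uniqueness triple sit on the \emph{first} model of the independent sequence, so I must extract a witnessing sequence whose first model is exactly $M_0$ (not merely some model containing $M_0$) while simultaneously having $M^+$, equivalently $M_1$, below the big model. This works out because for a \emph{sequence}, as opposed to a set, the middle entry of ``independent in $(M,M_0,M^+)$'' is by Definition~\ref{definition of independence}(b) the first model of the witnessing sequence, so that half is automatic, whereas Proposition~\ref{increasing the big model} enlarges the big model without touching the first model. Everything else — the $n=0$ degenerate case (where $J' = \emptyset$ and the claim is trivial) and the passages back and forth between sequence-independence and set-independence — is a routine unwinding of Definition~\ref{definition of independence}.
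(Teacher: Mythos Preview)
Your proposal is correct and follows essentially the same route as the paper's proof: take a finite $J'\subseteq J-\{a\}$, use Theorem~\ref{change order of a finite set}(a) to enumerate $J'\cup\{a\}$ with $a$ last, and then invoke Proposition~\ref{b_1-}. The paper's argument is terser and leaves implicit the bookkeeping you spell out (that the witnessing sequence begins at $M_0$ and that $M_1$ lies below the top model after enlarging via Proposition~\ref{increasing the big model}); your care on this point is appropriate, since Proposition~\ref{b_1-} genuinely requires the uniqueness triple to sit on the first model of the sequence.
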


\begin{proof}
let $J^*:=\{b_0,b_1,...,b_{n-1}\}$ be a finite subset of
$J-\{a\}$. By assumption, $J^* \bigcup \{a\}$ is independent in
$(M,M_0,M^+)$. By Theorem \ref{change order of a finite set}.a the
sequence $\langle b_0,b_1...b_{n-1} \rangle ^\frown \langle a
\rangle$ is independent in $(M,M_0,M^+)$. So by Proposition
\ref{b_1-} $\langle b_0,b_1...b_{n-1} \rangle$ is independent in
$(M,M_1,M^+)$.
\end{proof}

\begin{comment}
The following theorem is the main theorem of the section.
We think that the second assumption of it ($\frak{k}_\lambda$ has
disjoint amalgamation) is abandonment, but have no proof yet.
\end{comment}

The main theorem:
\begin{theorem}\label{the main theorem}
Suppose $\frak{s}$ is a good $\lambda$-frame minus stability with
conjugation and the existence property is satisfied by $K^{3,uq}$.
\begin{enumerate}[(a)]
%finite character
\item Independence is finitely independence.
%$J$ is finitely independent in $(M,M_0,N)$ iff $J$ is
%independent in $(M,M_0 \allowbreak ,N)$.
%set=sequence
\item If $J$ is independent in $(M,M_0,N)$ and the set $\{
a_\alpha:\alpha<\alpha^*\}$ is an enumeration of $J$ without
repetitions, then the sequence $\langle a_\alpha:\alpha<\alpha^*
\rangle$ is independent in $(M,M_0,N)$.
%continuity
\item The independence relation has continuity.
\begin{comment}
If $\delta$ is a limit ordinal, $\langle M_\alpha:\alpha \leq
\delta+1 \rangle$ is an increasing continuous sequence and $J$ is
independent in $(M,M_\alpha,M_{\delta+1})$ for $\alpha<\delta$,
then $J$ is independent in $(M,M_\delta,M_{\delta+1})$.
\end{comment}
%dimension
\item If
\begin{enumerate}
\item $M \preceq N$. \item $P \subset S^{bs}(M)$. \item $J \subset
N$. \item $a \in J \Rightarrow tp(a,M,N) \in P$. \item $J$ is
independent in $(M,M,N)$. \item $J$ is maximal under the previous
conditions. \end{enumerate} Then $dim(P,N)=|J|$ or
$dim(P,N)+|J|<\aleph_0$.
\end{enumerate}
\end{theorem}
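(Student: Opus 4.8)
I would read the four items as two packets of work: item (a) (together with the enumeration statement (b)) is the real content, and once (a) is available both (c) and (d) follow quickly --- (c) by sandwiching through finite independence, (d) by feeding (c) into Proposition~\ref{dimension proposition}. So the plan is: first prove a continuity lemma for the \emph{finite} independence relation; then (a) and (b); then (c); then (d).

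For the two halves of (a): the easy direction ``independent $\Rightarrow$ finitely independent'' only concerns finite subsets, and for a finite $J^{*}\subseteq J$ one reorders the witnessing sequence for $J$ by Theorem~\ref{change order of a finite set} so that $J^{*}$ becomes an initial segment and then uses monotonicity and Proposition~\ref{2-concatenation}. The hard direction is the construction. Given $J$ finitely independent in $(M,M_{0},M^{+})$ and an enumeration $J=\{a_{\alpha}:\alpha<\alpha^{*}\}$ without repetitions, I would build, by recursion on $\alpha\le\alpha^{*}$, increasing continuous chains $\langle M_{\alpha}\rangle$ and $\langle N_{\alpha}\rangle$ with $M_{\alpha}\preceq N_{\alpha}$, $J\subseteq N_{0}$, $M^{+}\preceq N_{0}$, maintaining: $\langle M_{\beta},a_{\beta}:\beta<\alpha\rangle^{\frown}\langle M_{\alpha}\rangle$ is independent over $M$ with each $(M_{\beta},M_{\beta+1},a_{\beta})\in K^{3,uq}$, and the tail $\{a_{\gamma}:\gamma\ge\alpha\}$ is finitely independent in $(M,M_{\alpha},N_{\alpha})$. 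At a successor $\alpha=\beta+1$: the singleton $\{a_{\beta}\}$ is independent in $(M,M_{\beta},N_{\beta})$, so $tp(a_{\beta},M_{\beta},N_{\beta})\in S^{bs}(M_{\beta})$ does not fork over $M$; by the existence property for $K^{3,uq}$ together with closure of $K^{3,uq}$ under isomorphism (Remark~\ref{uniqueness is closed under isomorphism}) and amalgamation I get $M_{\beta+1}\succeq M_{\beta}$ with $a_{\beta}\in M_{\beta+1}$, $(M_{\beta},M_{\beta+1},a_{\beta})\in K^{3,uq}$ realizing that type, and $N_{\beta+1}\succeq N_{\beta}$ amalgamating $M_{\beta+1}$ with $N_{\beta}$ over $M_{\beta}$; Proposition~\ref{b_1} then peels $a_{\beta}$ off and gives the tail $\{a_{\gamma}:\gamma>\beta\}$ finitely independent in $(M,M_{\beta+1},N_{\beta+1})$. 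Since this produces a witnessing sequence in the prescribed order, it proves (b) at the same time.

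The main obstacle is the limit step of this recursion. When $\alpha$ is limit and $M_{\alpha}=\bigcup_{\beta<\alpha}M_{\beta}$, $N_{\alpha}=\bigcup_{\beta<\alpha}N_{\beta}$, I need the tail $\{a_{\gamma}:\gamma\ge\alpha\}$ to remain finitely independent in $(M,M_{\alpha},N_{\alpha})$, and this is exactly a continuity statement for finite independence which has to be established beforehand: if $\delta$ is limit, $\langle M_{\alpha}:\alpha\le\delta\rangle$ is increasing continuous and $J$ is finitely independent in $(M,M_{\alpha},M^{+})$ for all $\alpha<\delta$, then $J$ is finitely independent in $(M,M_{\delta},M^{+})$. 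By Theorem~\ref{change order of a finite set} it suffices to treat a single sequence $\langle b_{0},\dots,b_{n-1}\rangle$, and I would induct on $n$: for $n=1$ this is precisely the continuity axiom of the frame (Definition~\ref{definition of a good frame minus stability}.h applied to $tp(b_{0},M_{\delta},M^{+})$); for the inductive step I would use the existence property for $K^{3,uq}$ and conjugation to choose, \emph{coherently in $\alpha$}, a uniqueness-triple extension $R_{\alpha}\succeq M_{\alpha}$ realizing $tp(b_{0},M_{\alpha},M^{+})$, with the $R_{\alpha}$ forming an increasing continuous chain whose union realizes $tp(b_{0},M_{\delta},M^{+})$ and with each $tp(b_{i},R_{\alpha},\cdot)$ ($i\ge 1$) still non-forking over $M$; then the induction hypothesis applied to $\langle b_{1},\dots,b_{n-1}\rangle$ along $\langle R_{\alpha}\rangle$ finishes it. The rigidity of uniqueness triples (Definition~\ref{definition uniqueness triples}, Proposition~\ref{non-forking by uniqueness}) is precisely what forces the $R_{\alpha}$ to cohere; without it the $n\ge 2$ limit step is where the argument would stall.

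With (a) and (b) in hand the rest is routine. For (c): if $J$ is independent in $(M,M_{\alpha},M^{+})$ for all $\alpha<\delta$, then by (a) it is finitely independent there, by the finite-independence continuity lemma it is finitely independent in $(M,M_{\delta},M^{+})$, and by (a) again independent there. For (d): since $J$ is a maximal independent set whose types lie in $P$, we have $dim(P,N)\le|J|$; choosing a maximal such set $J'$ with $|J'|=dim(P,N)$ and applying Proposition~\ref{dimension proposition} with $J_{1}=J'$, $J_{2}=J$ (its continuity hypothesis supplied by (c)) gives $|J'|=|J|$ or both finite, i.e. $dim(P,N)=|J|$ or $dim(P,N)+|J|<\aleph_{0}$.
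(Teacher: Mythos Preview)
Your overall architecture matches the paper exactly: the reductions $(d)\Leftarrow(a)\wedge(c)$ via Proposition~\ref{dimension proposition}, $(c)\Leftarrow(a)$ plus a continuity lemma for \emph{finite} independence, and the inductive construction for (a) and (b) using $K^{3,uq}$-existence together with Proposition~\ref{b_1} at successors, are precisely what the paper does (its Proposition~\ref{b implies d and c} and the proof of $(*)$).

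The one substantive divergence is your proof of the finite-continuity lemma. The paper does \emph{not} induct on $n$; instead it imports the relation $NF$ from \cite{jrsh875} (Fact~\ref{jrsh875.5.15}), builds via Proposition~\ref{amalgamation NF + disjointness} a parallel chain $\langle N_\alpha\rangle$ with $NF(M_\alpha,M_{\alpha+1},N_\alpha,N_{\alpha+1})$ and $M_{\delta+1}\preceq N_\delta$, then uses long transitivity of $NF$ and Proposition~\ref{independence NF} to transfer independence from $(M,M_\alpha,N_\alpha)$ to $(M,M_\delta,N_\delta)$ in one stroke. The $NF$ machinery is exactly what absorbs the coherence problem you isolate.

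Your alternative---induct on $n$ and, for the inductive step, build a coherent increasing continuous chain $\langle R_\alpha:\alpha<\delta\rangle$ with $(M_\alpha,R_\alpha,b_0)\in K^{3,uq}$---has a genuine gap at the point you yourself flag. The references you invoke do not deliver coherence: Proposition~\ref{non-forking by uniqueness} only says a certain \emph{type} does not fork, and Definition~\ref{definition uniqueness triples} gives uniqueness of amalgamations only up to the equivalence $E_{M_\alpha}$, not literal inclusion. Concretely, from $(M_\alpha,R_\alpha,b_0)\in K^{3,uq}$ and $M_\alpha\preceq M_{\alpha+1}$ you get a non-forking amalgam $R_{\alpha+1}$ with $R_\alpha\preceq R_{\alpha+1}$ and $M_{\alpha+1}\preceq R_{\alpha+1}$, but there is no reason for $(M_{\alpha+1},R_{\alpha+1},b_0)$ to lie in $K^{3,uq}$, so you cannot iterate and cannot invoke Proposition~\ref{b_1-} at the next stage; and at limits nothing tells you the union is again a uniqueness triple. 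Patching this is not a detail: it is essentially the content of the $NF$ construction in \cite{jrsh875}, which is where conjugation is actually consumed. So either cite $NF$ as the paper does, or be prepared to redo a nontrivial piece of \cite{jrsh875} inline.
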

\begin{proof}
$(a \wedge c) \Rightarrow d$: By Proposition \ref{dimension
proposition}.

$a \Rightarrow c$: By Proposition \ref{finite continuity
proposition} below.

Therefore it is enough to prove items a,b.

\begin{proposition}\label{b implies d and c}
Items a,b of Theorem \ref{the main theorem} is implied by (*): If
\begin{enumerate}
\item $J$ is finitely independent in $(M,M_0,M^+)$. \item
$\{a_\alpha:\alpha<\alpha^*\} \subset J \subseteq M^+$. \item The
sequence $\langle M_\alpha,a_\alpha:\alpha<\alpha^* \rangle
^\frown \langle M_{\alpha^*} \rangle$ is independent over $M$ by
$K^{3,uq}$. \item $M_{\alpha^*} \preceq M^+$.
\end{enumerate}
Then $J-\{a_\alpha:\al \alpha<\al \alpha^*\}$ is finitely
independent in $(M,M_{\alpha^*},\al M^+)$.
\end{proposition}

\begin{proof}
(a) We prove the non-trivial direction. Assume that $J$ is
finitely independent in $(M,M_0,M^+)$. Denote $N_0:=M^+$. Take an
enumeration $\{a_\alpha:\alpha<|J|\}$ of $J$ without repetitions.
We choose by induction on $\gamma \in (0,|J|]$ a pair of models
$(M_\gamma,N_\gamma)$ such that:
\begin{enumerate}[*]
\item The sequence $\langle M_\alpha,a_\alpha:\alpha<\gamma
\rangle ^\frown \langle M_{\gamma} \rangle$ is independent over
$M$ by $K^{3,uq}$. \item $\langle N_\alpha:\alpha \leq \gamma
\rangle$ is increasing and continuous. \item For $\alpha \leq
\gamma$, $M_\alpha \preceq N_\alpha$.
\end{enumerate}

If we succeed to carry out this induction, then the sequence
$\langle M_\alpha,a_\alpha:\alpha<|J| \rangle$ is independent over
$M$, so the set $J$ is independent in $(M,M_0,M_{|J|})$ so in
$(M,M_0,M^+)$ (because $M^+ \preceq N_{|J|} \succeq M_{|J|}$).

Why can we carry out this induction? Assume that
$\gamma=\alpha^*+1$. We want to substitute $N_{\alpha^*}$ instead
of $M^+$ in assumption $(*)$ and to conclude that the set
$J-\{a_\alpha:\alpha<\alpha^*\}$ is finitely independent in
$(M,M_{\alpha^*},N_{\alpha^*})$. But why are the conditions of
$(*)$ satisfied?
\begin{enumerate}
\item Since $N=M^+$. \item By the definition of
$\{a_\alpha:\alpha<|J|\}$. \item By the induction hypothesis.
\item By the induction hypothesis $M_{\alpha^*} \preceq
N_{\alpha^*}$.
\end{enumerate}
Therefore we can conclude that the set
$J-\{a_\alpha:\alpha<\alpha^*\}$ is finitely independent in
$(M,M_{\alpha^*},N_{\alpha^*})$.

$\{a_{\alpha^*}\} \subseteq J-\{a_\alpha:\alpha<\alpha^*\}$ and it
is finite. So the set $\{a_{\alpha^*}\}$ is independent in
$(M,M_{\alpha^*},N_{\alpha^*})$, namely
$(M_{\alpha^*},N_{\alpha^*},a_{\alpha^*}) \in K^{3,bs}$ and
$tp(a_{\alpha^*},M_{\alpha^*},N_{\alpha^*})$ does not fork over
$M$. Since the existence property is satisfied by $K^{3,uq}$,
there are $M^{temp}_{\alpha^*+1},b_{\alpha^*}$ such that
$(M_{\alpha^*},M^{temp}_{\alpha^*+1},b_{\alpha^*}) \in K^{3,uq}$
and
$tp(b_{\alpha^*},M_{\alpha^*},M^{temp}_{\alpha^*+1})=tp(a_{\alpha^*},M_{\alpha^*},N_{\alpha^*})$.
So by the definition of a type (and Remark \ref{uniqueness is
closed under isomorphism}), for some models
$M_{\alpha^*+1},N_{\alpha^*+1}$ the following hold:
\begin{enumerate}[*]
\item $M_{\alpha^*} \preceq M_{\alpha^*+1} \preceq
N_{\alpha^*+1}$. \item $N_{\alpha^*} \preceq N_{\alpha^*+1}$.
\item $(M_{\alpha^*},M_{\alpha^*+1},a_{\alpha^*}) \in K^{3,uq}$.
\end{enumerate}

For limit $\gamma$ we take unions and use smoothness.

(b) By a similar proof.
%or by item a and Theorem \ref{change order
%of a finite set}.
\end{proof}

By \cite{jrsh875} (Definition 5.2 and Theorem 5.27):
\begin{fact} \label{jrsh875.5.15}
Suppose $\frak{s}$ is a good $\lambda$-frame minus stability with
conjugation and the existence property is satisfied by $K^{3,uq}$.
Then there is a (unique) relation $NF \subseteq \ ^4K_\lambda$
such that:
\begin{enumerate}[(a)]
\item If $NF(M_0,M_1,M_2,M_3) \ then \ n\in \{1,2\}\rightarrow
M_0\leq M_n\leq M_3 \ and \ M_1 \cap M_2=M_0$.  \item
Monotonicity: if $NF(M_0,M_1,M_2,M_3) \ and \ N_0=M_0, n<3
\rightarrow N_n\leq M_n\wedge N_0\leq N_n\leq N_3, (\exists
N^{*})[M_3\leq N^{*}\wedge N_3\leq N^{*}] \ then \ NF(N_0
\allowbreak ,N_1,N_2,N_3)$. \item Existence: For every
$N_0,N_1,N_2 \in K_\lambda \ if \ l\in \{1,2\}\rightarrow N_0 \leq
N_l \ and \ N_1\bigcap \allowbreak N_2=N_0 \ then \ there \ is N_3
\ s.t. \ NF(N_0,N_1,N_2,N_3).$ \item Uniqueness: Suppose for x=a,b
$NF(N_0,N_1,N_2,N^{x}_3)$. Then there is a joint embedding of
$N^a,N^b \ over \ N_1 \bigcup N_2$. \item  Symmetry:
$NF(N_0,N_1,N_2,N_3) \leftrightarrow NF(N_0,N_2,N_1,N_3)$. \item
Long transitivity: For $x=a,b$ let $\langle  M_{x,i}:i\leq
\alpha^* \rangle$ an increasing continuous sequence of models in
$K_\lambda$. Suppose $i<\alpha^* \rightarrow
NF(M_{a,i},M_{a,i+1},M_{b,i},\allowbreak M_{b,i+1})$. Then
$NF(M_{a,0},M_{a,\alpha^{*}},M_{b,0},M_{b,\alpha^{*}})$ \item $NF$
respects $\frak{s}$: if $NF(M_0,M_1,M_2,M_3)$ and $tp(a,M_0,M_1)
\in S^{bs}(M_0)$ then $tp(a,M_2,M_3)$ does not fork over $M_0$.
\end{enumerate}
\end{fact}

\begin{proposition}\label{amalgamation NF}
If $\langle M_{\alpha}:\alpha \leq \alpha^* \rangle$ is an
increasing continuous sequence of models and $M_0 \prec N$, then
there is an increasing continuous sequence $\langle
N_{\alpha}:\alpha \leq \alpha^* \rangle$ such that for
$\alpha<\alpha^*$
$NF(M_{\alpha},M_{\alpha+1},N_{\alpha},N_{\alpha+1})$ and $N,N_0$
are isomorphic over $M_0$.
\end{proposition}

\begin{proof}
We choose $(N^{temp}_\alpha,f_\alpha)$ by induction on $\alpha$
such that:
\begin{enumerate}
\item $\alpha \leq \theta \Rightarrow N^{temp}_\alpha \in
K_\lambda$. \item $(N_0^{temp},f_0)=(N,id_{M_0})$. \item The
sequence $\langle N^{temp}_\alpha:\alpha \leq \theta \rangle$ is
increasing and continuous. \item The sequence $\langle
f_\alpha:\alpha \leq \theta \rangle$ is increasing and continuous.
\item For $\alpha \leq \theta$, the function $f_\alpha$ is an
embedding of $M_\alpha$ to $N^{temp}_\alpha$. \item For
$\alpha<\theta$, we have
$NF(M_\alpha,M_{\alpha+1},N^{temp}_\alpha,N^{temp}_{\alpha+1})$.
\end{enumerate}

\begin{displaymath}
\xymatrix{ N=N_0^{temp} \ar[r]^{id} & N_2^{temp} \ar[rr]^{id} &&
N_\alpha^{temp} \ar[r]^{id}
& N_{\alpha+1}^{temp} \ar[rr]^{id} && N_\theta^{temp}\\
M_0 \ar[r]^{id} \ar[u]^{f_0} & M_2 \ar[rr]^{id} \ar[u]^{f_2} &&
M_\alpha \ar[r]^{id} \ar[u]^{f_\alpha} & M_{\alpha+1} \ar[rr]^{id}
\ar[u]^{f_{\alpha+1}} && M_\theta \ar[u]^{f_\theta}}
\end{displaymath}
\emph{Why is this possible?} For $\alpha=0$ see 2. For $\alpha$
limit define $N^{temp}_\alpha:=\bigcup
\{N_\beta^{temp}:\beta<\alpha \},\ f_\alpha:=\bigcup
\{f_\beta:\beta<\alpha \}$. By the induction hypothesis
$\beta<\alpha \Rightarrow f_\beta[M_\beta] \preceq N_\beta^{temp}$
and the sequences $\langle N_\beta^{temp}:\beta \leq \alpha
\rangle,\ \langle f_\beta:\beta \leq \alpha \rangle$ are
increasing and continuous. So by smoothness (Definition
\ref{1.1}.d) $f_\alpha[M_\alpha] \preceq N^{temp}_\alpha$.

For successor $\alpha$ we use the existence (clause c) in Fact
\ref{jrsh875.5.15}.

Now $f_\theta:M_\theta \to N_\theta^{temp}$ is an isomorphism.
Extend $f_\theta^{-1}$ to a function $g$ with domain
$N_\theta^{temp}$ and define $f:=g \restriction N$. By 2,3 $N
\preceq N_\theta^{temp}$. By 2, $f$ is an isomorphism over $M_0$.
Define $N_\alpha:=g[N^{temp}_\alpha]$. By 5, $f_\alpha[M_\alpha]
\preceq N^{temp}_\alpha$, so $M_\alpha \preceq N_\alpha$.
\end{proof}

\begin{proposition}\label{independence NF}
If $J$ is independent in $(M,M_0,M_1)$ and $NF(M_0,M_1,\allowbreak
M_2,M_3)$ then $J$ is independent in $(M,M_2,M_3)$.
\end{proposition}

\begin{proof}
By Proposition \ref{amalgamation NF}, the long transitivity in
Fact \ref{jrsh875.5.15} and the uniqueness in Fact
\ref{jrsh875.5.15}. We elaborate: By Definition \ref{definition of
independence} there is an independent sequence $\langle
N_{0,\alpha},a_\alpha:\alpha<\alpha^* \rangle ^\frown \langle
N_{0,\alpha^*} \rangle$ over $M$ such that
$J=\{a_\alpha:\alpha<\alpha^*\}$, $N_{0,0}=M_0$ and $M_1 \preceq
N_{0,\alpha^*}$. By Proposition \ref{amalgamation NF} there is an
increasing continuous sequence of models $\langle
N_{1,\alpha}:\alpha \leq \alpha^* \rangle$ such that for $\alpha
\leq \alpha^*$ we have
$NF(N_{0,\alpha},N_{0,\alpha+1},N_{1,\alpha},N_{1,\alpha+1})$ and
there is an isomorphism $f:M_2 \to N_{1,0}$ over $M_0$. Now we
prove:
\begin{enumerate}[(i)]
\item $NF(M_0,M_1,N_{1,0},N_{1,\alpha^*})$. \item The sequence
$\langle N_{1,\alpha},a_\alpha:\alpha<\alpha^* \rangle ^\frown
\langle N_{1,\alpha^*} \rangle$ is independent over $M$.
\end{enumerate}

Why is it enough? By assumption $NF(M_0,M_1,M_2,M_3)$, so by (i)
and the uniqueness in Fact \ref{jrsh875.5.15}
$(id_{M_1},id_{M_2},M_3)\allowbreak E_{M_0}(id \allowbreak
_{M_1},f,M_3^*)$. Therefore by (ii), the set $J$ is independent in
$(M,M_2,M_3)$.

\begin{proof}
\mbox{}
\begin{enumerate}[(i)]
\item By the long transitivity in Fact \ref{jrsh875.5.15}
 we have
$NF(M_0,N_{0,\alpha^*},N_{1,0},N_{1,\alpha^*})$. But $M_0 \preceq
M_1 \preceq N_{0,\alpha^*}$. So by the monotonicity in Fact
\ref{jrsh875.5.15} $NF(M_0,M_1\allowbreak
,N_{1,0},N_{1,\alpha^*})$. \item By Fact \ref{jrsh875.5.15} the
relation $NF$ respects the frame $\frak{s}$. But for
$\alpha<\alpha^*$
$NF(M_{0,\alpha},M_{0,\alpha+1},M_{1,\alpha},M_{1,\alpha+1})$
holds. So $tp(a_\alpha,M_{1,\alpha},\allowbreak M_{1,\alpha+1})$
does not fork over $M_{0,\alpha}$. Since the sequence $\langle
N_{0,\alpha}:\alpha<\alpha^* \rangle ^\frown \langle
N_{0,\alpha^*} \rangle$ is independent over $M$,
$tp(a_\alpha,M_{0,\alpha},M_{0,\alpha+1})$ does not fork over $M$.
So by the transitivity (Definition \ref{definition of a good frame
minus stability}), $tp(a_\alpha,M_{1,\alpha},M_{1,\alpha+1})$ does
not fork over $M$.
\end{enumerate}
\end{proof}
This ends the proof of Proposition \ref{independence NF}.
\end{proof}

\begin{proposition}\label{amalgamation NF + disjointness}
If $\langle M_{\alpha}:\alpha \leq \alpha^*+1 \rangle$ is an
increasing continuous sequence of models, then there is an
increasing continuous sequence $\langle N_{\alpha}:\alpha \leq
\alpha^* \rangle$ such that for $\alpha<\alpha^*$
$NF(M_{\alpha},M_{\alpha+1},N_{\alpha},N_{\alpha+1})$ and
$M_{\alpha^*+1} \preceq N_{\alpha^*}$.
\end{proposition}

\begin{proof}
By the proof of Proposition \ref{preparation exchange}.
\end{proof}

\begin{proposition}\label{finite continuity proposition}
The finitely independence has continuity. Equivalently: If
\begin{enumerate}
\item $\delta$ is a limit ordinal. \item $\langle M_\alpha:\alpha
\leq \delta+1 \rangle$ is increasing and continuous. \item $J
\subset M_{\delta+1}$ and it is finite. \item For $\alpha<\delta$
$J$ is independent in $(M,M_\alpha,M_{\delta+1})$.

\end{enumerate}
then $J$ is independent in $(M,M_\delta,M_{\delta+1})$.
\end{proposition}

\begin{proof}
By Proposition \ref{amalgamation NF + disjointness} there is
another increasing continuous sequence $\langle N_{\alpha}:\alpha
\leq \delta \rangle$ such that for $\alpha<\delta$
$NF(M_{\alpha},\allowbreak M_{\alpha+1},N_{\alpha},N_{\alpha+1})$
and $M_{\delta+1} \preceq N_{\delta}$.

Take $\alpha<\delta$ with $J \subset N_\alpha$. Since $J$ is
independent in $(M,M_\alpha,M_{\delta+1})$ and $J \subseteq
N_\alpha \preceq N_\delta \succeq M_{\delta+1}$, it follows that
$J$ is independent in $(M,M_\alpha,N_\alpha)$. By the long
transitivity in Fact \ref{jrsh875.5.15}, we have
$NF(M_\alpha,N_\alpha,M_\delta,N_\delta)$. Therefore by
Proposition \ref{independence NF} (where the models
$M,M_\alpha,N_\alpha,M_\delta,N_\delta$ which appear here stands
for the models $M,M_0,M_1,M_2,M_3$) $J$ is independent in
$(M,M_\delta,N_{\delta})$, hence in $(M,M_\delta,\allowbreak
M_{\delta+1})$.
\end{proof}

Finally, we prove $(*)$ of Proposition \ref{b implies d and c}.
Denote (*) for $\alpha^*$ by $(*)_{\alpha^*}$. We prove
$(*)_{\alpha^*}$ by induction on $\alpha^*$:

\case{Case a:} $\alpha^*=0$. The conclusion is actually assumption
(1).

\case{Case b:} $\alpha^*=\gamma+1$.
\begin{comment}
By condition 3, the sequence $\langle
M_\alpha,a_\alpha:\alpha<\gamma \rangle ^\frown \langle M_\gamma
\rangle$ is independent over $M$ by $K^{3,uq}$.
\end{comment}
By the induction hypothesis $(*)_\gamma$ holds. We want to
conclude that $J-\{a_\alpha:\alpha<\gamma \}$ is finitely
independent in
$(M,M_\gamma,M^+)$. So we check the conditions:\\
(1): It is assumption (1).\\
(2): By assumption (2) $\{a_\alpha:\alpha<\alpha^*\} \subseteq J
\subseteq M^+$. But $\{a_\alpha:\alpha<\gamma\} \subseteq
\{a_\alpha:\alpha<\alpha^*\}$.\\
(3): By assumption (3) the sequence $\langle
M_\alpha,a_\alpha:\alpha<\alpha^* \rangle ^\frown \langle
M_{\alpha^*} \rangle$ is independent over $M$ by $K^{3,uq}$,
namely the sequence $\langle M_\alpha,a_\alpha:\alpha<\gamma
\rangle ^\frown \langle M_{\gamma},a_\gamma,M_{\alpha^*} \rangle$
is independent over $M$ by $K^{3,uq}$. So the sequence $\langle
M_\alpha,a_\alpha:\alpha<\gamma \rangle ^\frown \langle M_{\gamma}
\rangle$ is independent over $M$ by $K^{3,uq}$.\\
(4): By assumption (4) $M_{\alpha^*} \preceq M^+$. But by
assumption (3) $M_\gamma \preceq M_{\alpha^*}$. So $M_\gamma
\preceq M^{+}$.\\
Now by $(*)_\gamma$ the set $J-\{a_\alpha:\alpha<\gamma\}$ is
finitely independent in $(M,M_\gamma,M^+)$.

But by assumption 3, $(M_\gamma,M_{\alpha^*},a_{\alpha^*}) \in
K^{3,uq}$. So Proposition \ref{b_1} (substituting
$J-\{a_\alpha:\alpha<\gamma\},M,M_\gamma,M^+,a_{\alpha^*},M_{\alpha^*}$
instead of $J,M,M_0,M^+,a,\allowbreak M_1$ respectively), yields
that $J-\{a_\alpha:\alpha<\alpha^*\}$ is finitely independent in
$(M,M_{\alpha^*},M^+)$.
\begin{comment}
 delete this comment tomorrow: $J^*$ is a finite subset of
$J-\{a_\alpha:\alpha<\alpha^*\}$, so it is independent in
$(M,M_{\alpha^*},M^+)$.
\end{comment}

\case{Case c:} $\alpha^*$ is limit.  Let $J^*$ be a finite subset
of $J-\{a_\alpha:\alpha<\alpha^*\}$. We have to prove that $J^*$
is independent in $(M,M_{\alpha^*},M^+)$. For $\gamma<\alpha^*$
$J^* \subseteq J-\{a_\alpha:\alpha<\gamma\}$, so by $(*)_\gamma$,
$J^*$ is independent in $(M,M_\gamma,M^+)$. Therefore by
Proposition \ref{finite continuity proposition}, $J^*$ is
independent in $(M,M_{\alpha^*},M^+)$.

\begin{comment}
 (1),(2) by Proposition \ref{b implies d
and c}.\\
\end{comment}

This ends the proof of Theorem \ref{the main theorem}.
\end{proof}

\subsection*{ Acknowledgment} We thank Boaz Tsaban for his useful suggestions and
comments.

{10}


\begin{thebibliography}{10}

\bibitem [Gr 21] {gr21}
Rami Grossberg.
\newblock Classification theory for abstract
elementary classes.
\newblock Logic and Algebra, ed. Yi Zhang,
Contemporary Mathematics, Vol 302, AMS, (2002), pp. 165--204.

\bibitem[GrKo]{grko}
Rami Grossberg and Alexei Kolesnikov.
\newblock Excellent Abstract
Elementary Classes are Tame. 26 pages
\newblock Preprint abailable at
\texttt{arxiv.org/abs/math.LO/0509307}

\bibitem[Sh:h] {shh}
Saharon Shelah.
\newblock Classification theorey for non-elementary classes. A book In
Press.


\bibitem[JrSh 875]{jrsh875}
Adi Jarden and Saharon Shelah.
\newblock Good frames with a weak stability.
\newblock Submitted at A.P.A.L.
\newblock Preprint available at
\texttt{http://front.math.ucdavis.edu/0901.0852}.

\bibitem[Sh 88]{sh88}
Saharon Shelah.
\newblock Classification of nonelementary classes, II.
Abstract Elementary Classes.  In {\bf Classification Theory
(Chicago IL 1985)}, volume 1292 of \emph{Lecture Notes in
Mathematics}, pages 419--497.  Springer, Berlin, 1987. Proceedings
of the USA-Israel Conference on Classification Theory, Chicago,
December 1985; ed. Baldwin, J.T.

\bibitem[Sh 600]{sh600} Saharon Shelah.
\newblock Categoricity in abstract elementary classes: going up inductive
step.
\newblock It is one of the chapters of \cite{shh}
\newblock Preprint available at
\texttt{http://front.math.ucdavis.edu/0011.5215}.


\bibitem[Sh 838]{sh838}
Saharon Shelah.
\newblock Non-structure in $\lambda^{++}$ using instances of WGCH
\newblock It is one of the chapters of \cite{shh}
\newblock Preprint available at
\texttt{http://front.math.ucdavis.edu/0808.3020}.

\bibitem[Sh 705]{sh705}
Saharon Shelah.
\newblock Toward classification theory of good $\lambda$ frames and abstract elementary classes
\newblock It is one of the chapters of \cite{shh}
\newblock Preprint available at
\texttt{http://front.math.ucdavis.edu/0404.5272}.

\bibitem[Sh E46] {she46}
Saharon Shelah.
\newblock
\newblock One of the chapters of \cite{shh}.


\end{thebibliography}
\end{document}